% Article submitted to SIAGA
\documentclass[onefignum,onetabnum]{siamonline220329}
%\documentclass[onefignum,onetabnum]{siamart220329}

% Information that is shared between the article and the supplement
% (title and author information, macros, packages, etc.) goes into
% ex_shared.tex. If there is no supplement, this file can be included
% directly.

% SIAM Shared Information 
% This is information that is shared between the main document and any
% supplement. If no supplement is required, then this information can
% be included directly in the main document.

% Packages and macros go here
\usepackage{amsfonts,amssymb, amsmath, amsopn}
\usepackage{mathtools}
\usepackage{physics}
\usepackage[utf8]{inputenc}
\usepackage[autostyle]{csquotes}
\usepackage{tikz-cd}
\tikzstyle{arrow}=[draw, -latex] 
\usepackage{tabularx}
\usepackage{cleveref}
\usepackage{hyperref}
\usepackage{xcolor}
\usepackage{mwe}
% To get total number of pages
\usepackage{zref-totpages}
% To force figures to be in place
\usepackage{float}
\usepackage{charter}
\usepackage{amsfonts}
\usepackage{graphicx}
\usepackage{epstopdf}
\usepackage{algorithmic}
\ifpdf
  \DeclareGraphicsExtensions{.eps,.pdf,.png,.jpg}
\else
  \DeclareGraphicsExtensions{.eps}
\fi

% Prevent itemized lists from running into the left margin inside theorems and proofs
\usepackage{enumitem}
\setlist[enumerate]{leftmargin=.5in}
\setlist[itemize]{leftmargin=.5in}

% Add a serial/Oxford comma by default.

% Greek characters in sans serif
\usepackage{alphabeta}
\makeatletter
\renewcommand\verbatim@font{\normalfont\fontencoding{T1}\ttfamily}
\makeatother
% Font 
\usepackage{microtype} 
\usepackage[T1]{fontenc}
\usepackage{lmodern}

\usepackage{sansmathfonts}
% To get \mathbb in sans serif
\DeclareFontFamily{U}{futm}{}
\DeclareFontShape{U}{futm}{m}{n}{
  <-> s * [.92] fourier-bb
  }{}
\DeclareSymbolFont{Ufutm}{U}{futm}{m}{n}
\DeclareSymbolFontAlphabet{\mathbb}{Ufutm}

% Math environments
\crefname{section}{\S}{section}
\crefname{subsection}{\S}{subsection}
\newsiamthm{example}{\textbf{Example}}
\crefname{example}{Example}{Examples}
%%%%%%%%%%%%%%%%%%%%%%%%%%%%%%%%%%%%%%
\newtheorem{question}{Question}[section]
\crefname{question}{Question}{Question}
%%%%%%%%%%%%%%%%%%%%%%%%%%%%%%%%%%%%%%
\newsiamthm{notation}{\textbf{Notation}}
\crefname{notation}{Notation}{Notation}
\newsiamthm{remark}{\textbf{Remark}}
\crefname{remark}{Remark}{Remark}
\newsiamthm{fact}{\textbf{Fact}}
\crefname{fact}{Fact}{Fact}
\newsiamthm{property}{Property}
\crefname{property}{Property}{Property}
\newsiamthm{construction}{Construction}
\crefname{construction}{Construction}{Construction}
\newsiamthm{mytheorem}{\textbf{Theorem}}
\crefname{mytheorem}{\textsc{Theorem}}{Theorem}
\newsiamthm{mylemma}{\textbf{Lemma}}
\crefname{mylemma}{\textsc{Lemma}}{Lemma}
\newsiamthm{mydefinition}{\textbf{Definition}}
\crefname{mydefinition}{\textsc{Definition}}{Definition}
\newsiamthm{myproposition}{\textbf{Proposition}}
\crefname{myproposition}{\textsc{Proposition}}{Proposition}
\crefname{figure}{Figure}{Figure}
\crefname{example}{Example}{Example}

\headers{Construction of birational trilinear volumes via tensor rank criteria}{L. Bus\'e and P. Maz\'on}

% Shortcuts for math notation 

\newcommand{\PPP}{(\P^1)^3}
\newcommand{\PPPe}{\P^1 \times \P^1 \times \P^1}

\newcommand{\R}{\mathbb{R}}
\newcommand{\C}{\mathbb{C}}

\newcommand{\Z}{\mathbb{Z}}
\newcommand{\A}{\mathbb{A}}

\renewcommand{\P}{\mathbb{P}}

\renewcommand{\rank}{\mathrm{rank}}

\newcommand{\mbf}{\mathbf}
\newcommand{\ovl}{\overline}
\newcommand{\bs}{\boldsymbol}
\newcommand{\dist}{\text{dist}}
\newcommand{\distbir}{\dist_\text{bir}}

% Title. If the supplement option is on, then "Supplementary Material"
% is automatically inserted before the title.
\title{Construction of birational trilinear volumes via tensor rank criteria\thanks{Accepted for publication in the SIAM Journal on Applied Algebra and Geometry (SIAGA). 
Part of the results of the manuscript appear in the second author's Ph$.$D$.$ thesis \cite{thesis_poles}. 
\funding{The two authors were funded by the European Union’s Hori\-zon 2020 research and innovation programme, under the Marie Skłodowska-Curie grant agreement 860843. The second author was also funded by the European Union's Next Generation PRIN 2022, Prot$.$ 20223B5S8L.}}}

% Authors: full names plus addresses.

\author{
  Laurent Bus\'e\thanks{Université Côte d'Azur, Inria, 2004 route des Lucioles, 06902 Sophia Antipolis, France. (\email{laurent.buse@inria.fr}, \email{pablo.gonzalez-mazon@inria.fr}).}
  \and
  Pablo Maz\'on$^{\dagger}$\thanks{Dipartimento di Matematica, Università degli Studi di Trento, via Sommarive 14, I-38123 Povo di Trento (TN), Italy. (\email{pablo.gonzalezmazon@unitn.it}).}
}

\usepackage{amsopn}

% Optional PDF information
\ifpdf
\hypersetup{
  pdftitle={Construction of birational trilinear volumes via tensor rank criteria},
  pdfauthor={Laurent Bus\'e and Pablo Maz\'on}
}
\fi

% The next statement enables references to information in the
% supplement. See the xr-hyperref package for details.

\makeatletter
\def\cref@getref#1#2{%
  \expandafter\let\expandafter#2\csname r@#1@cref\endcsname%
  \expandafter\expandafter\expandafter\def%
    \expandafter\expandafter\expandafter#2%
    \expandafter\expandafter\expandafter{%
      \expandafter\@firstoffive#2}}% <-------- five
\def\cpageref@getref#1#2{%
  \expandafter\let\expandafter#2\csname r@#1@cref\endcsname%
  \expandafter\expandafter\expandafter\def%
    \expandafter\expandafter\expandafter#2%
    \expandafter\expandafter\expandafter{%
      \expandafter\@secondoffive#2}}% <----------- five
      
\AtBeginDocument{%
   \def\label@noarg#1{%
    \cref@old@label{#1}%
    \@bsphack%
    \edef\@tempa{{page}{\the\c@page}}%
    \setcounter{page}{1}%
    \edef\@tempb{\thepage}%
    \expandafter\setcounter\@tempa%
    \cref@constructprefix{page}{\cref@result}%
    \protected@write\@auxout{}%
      {\string\newlabel{#1@cref}{{\cref@currentlabel}%
      {[\@tempb][\arabic{page}][\cref@result]\thepage}{}{}{}}}% <----- five
    \@esphack}%
  \def\label@optarg[#1]#2{%
    \cref@old@label{#2}%
    \@bsphack%
    \edef\@tempa{{page}{\the\c@page}}%
    \setcounter{page}{1}%
    \edef\@tempb{\thepage}%
    \expandafter\setcounter\@tempa%
    \cref@constructprefix{page}{\cref@result}%
    \protected@edef\cref@currentlabel{%
      \expandafter\cref@override@label@type%
        \cref@currentlabel\@nil{#1}}%
    \protected@write\@auxout{}%
      {\string\newlabel{#2@cref}{{\cref@currentlabel}%
      {[\@tempb][\arabic{page}][\cref@result]\thepage}{}{}{}}}% <------- five
    \@esphack}%
    } 

\begin{document}

\maketitle

% REQUIRED
\begin{abstract}
We provide effective methods to construct and manipulate trilinear birational maps $\phi:(\P^1)^3\dashrightarrow \P^3$ by establishing a novel connection between birationality and tensor rank. 
These yield four families of nonlinear birational transformations between 3D spaces that can be operated with enough flexibility for applications in computer-aided geometric design. 
More precisely, we describe the geometric constraints on the defining control points of the map that are necessary
for birationality, and present constructions for such configurations. 
For adequately constrained control points, we prove that birationality is achieved if and only if a certain $2\times 2\times 2$ tensor has rank one. 
As a corollary, we prove that the locus of weights that ensure birationality is $(\P_\R^1)^3$. 
Additionally, 
we provide formulas for the inverse $\phi^{-1}$ as well as the explicit defining equations of the irreducible components of the base loci.
Finally, we introduce a notion of distance to birationality for trilinear rational maps, and explain how to continuously deform birational maps.   
\end{abstract}

% REQUIRED
\begin{keywords}
Birational map, multiprojective space, multilinear, syzygy, tensor, geometric modeling
\end{keywords}

% REQUIRED
\begin{MSCcodes}
14E05, 14Q99, 65D17
\end{MSCcodes}

\section{Introduction}
\label{introduction}

In the fields of geometric modeling and computer-aided geometric design (CAGD), rational maps play a pivotal role. 
They offer an intuitive means of representing curves, surfaces, and volumes  \cite{cad_book_1,cad_book_2,
cox_applications_of_polynomials}, 
and have been instrumental for the 
modern development of these areas since the seminal works of Pierre Bézier and Paul de Casteljau in the 1950s and 1960s 
\cite{bezier_original,
de_Casteljau_original_1,
de_Casteljau_original_2}. 
The primary representation of rational parametrizations in practical scenarios involve control points, nonnegative weights, and blending functions. 
Moreover, the most frequent parametrizations rely on
tensor-product polynomials. 
We denote by $\P^n$ the complex projective $n$-space, and by $\P_\R^n$ the real projective $n$-space. 
For rational volumes, the typical parametrizations take the form
\begin{align}
\label{volume parametrization}
\phi : \P_\R^1 \times \P_\R^1 \times \P_\R^1 
&\dashrightarrow \P_\R^3 
\\[2pt] 
\nonumber
(s_0:s_1)\times (t_0:t_1)\times (u_0:u_1)
&\mapsto 
\sum_{i = 0}^n \sum_{j = 0}^m \sum_{k = 0}^l w_{ijk} \, \mbf{P}_{ijk} \, b_i^n(s_0,s_1) \, b_j^m(t_0,t_1) \, b_k^l(u_0,u_1) 
\ , 
\end{align}
for some \textit{control points} $\mbf{P}_{ijk} = (1,x_{ijk}, y_{ijk}, z_{ijk})$ in $\R^4$ and nonnegative \textit{weights} $w_{ijk}$ in $\R$, for each $0\leq i\leq n$, $0\leq j\leq m$, $0\leq k\leq l$, where 
$
b_i^n(s_0,s_1)$ is the $i$-th homogeneous Bernstein polynomial of degree $n$. 
The control points and weights offer intuitive insights into the geometry of the rational map. 
In a suitable affine chart, the parametrized shape mimics the net of control points, and the weights have a pull-push effect towards them (see e$.$g$.$ \cite{farin} and \cite[Chapter 3]{cox_applications_of_polynomials}). 
Furthermore, the control points provide useful differential information about the rational map.  

A rational parametrization is \textit{birational} if it admits an inverse map which is also rational
\cite{harris,HartshorneBook}. 
Birational maps have several advantages in applications. 
One key benefit is that they ensure global injectivity (on a Zariski open set). 
%Often, manipulating control points can lead to singularities and self-intersections of shapes due to noninjective parametrizations. 
More importantly, the inverse can be exploited for computing preimages. 
Some applications require the computation of preimages for various purposes  \cite{computation_preimages_required_1,computation_preimages_required_2}, 
and it is convenient for others such as image and volume warping
\cite{
computation_preimages_warping_2,
computation_preimages_warping_3}, 
morphing \cite{computation_preimages_morphing_2}, 
texturing 
\cite{
computation_preimages_texture_2,
computation_preimages_texture_3}, 
or the generation of 3D curved meshes for geometric analysis 
\cite{computation_preimages_3D_curve_meshes_1,
computation_preimages_3D_curve_meshes_2}. 
Birational maps offer computational advantages since the
inverse yields formulas for these preimages 
without invoking numerical solving methods.

In general, \eqref{volume parametrization} is  not birational.  
More specifically, the locus of birational transformations in the space of such parametrizations generally has a large codimension (see e$.$g$.$ \cite{bisiplane,petitdegree,deserti_some_properties,cubic}). 
Therefore, birationality represents a notably restrictive condition, making the construction of birational maps a challenging algebraic problem that should be treated using tools from algebraic geometry and commutative algebra.

\subsection{Previous work}
\label{previous work}

Surprisingly, although birational geometry is a classical topic in algebraic geometry with a trajectory of over 150 years  (see e.g.~\cite{cremona1,hudson,noether-castelnuovo,Alberich,
petitdegree,dolgachev_cremona}), it wasn't until 2015 that the practical application of (nonlinear) birational transformations to design emerged, in the work of Thomas Sederberg and collaborators \cite{sederberg2D,SGW16}.  
The earliest works about  constructing birational maps dealt with 2D tensor-product parametrizations
$\phi:\P_\R^1\times \P_\R^1 \dashrightarrow \P_\R^2$ 
defined by polynomials of low degree. 
More general (nonrational) inversion formulas for bilinear rational maps are also studied in \cite{floater}.   
Recently, novel conditions for birationality have been proposed for parametrizations $\phi: \P_\R^2 \dashrightarrow \P_\R^2$ with quadratic entries, relying on the complex rational representation of a rational map \cite{construction_quadratic_Cremona_1,construction_quadratic_Cremona_2}. 

It is important to highlight three characteristics that are common to all the works addressing the construction of birational maps in CAGD published to date:
\begin{enumerate}
\label{principles for the construction of birational maps}
\item They only treat 2D parameterizations
%. Methods for constructing and manipulating 3D nonlinear birational maps are yet to be developed
\item They rely on the imposition of specific syzygies in order to achieve birationality  
\item They provide strategies for constructing (possibly constrained) nets of control points with sufficient flexibility, followed by the computation of weights that ensure birationality
\end{enumerate}

\subsection{Trilinear rational maps} 
\label{subsection: trilinear maps}

The multiprojective space $(\P_\R^1)^3$ (resp$.$ $(\P^1)^3$) is associated to the standard $\Z^3$-graded tensor-product ring $R = \R[s_0,s_1]\otimes \R[t_0,t_1]\otimes \R[u_0,u_1]$ (resp$.$ over $\C$).   
In this paper, we are interested in trilinear birational maps. 

\begin{definition}
A trilinear rational map is a rational map $\phi:(\P^1)^3\dashrightarrow\P^3$ defined by trilinear polynomials, i$.$e$.$ homogeneous polynomials in $R$ of $\Z^3$-degree $(1,1,1)$.  
\end{definition} 

A trilinear rational map can be defined by means of control points and weights as  
\begin{eqnarray}
\label{eq: trilinear map}
\phi : \P^1 \times \P^1 \times \P^1
&\dashrightarrow &\P^3 
\\[2pt] 
\nonumber
(s_0:s_1)
\times 
(t_0:t_1)
\times 
(u_0:u_1)
&\mapsto 
& 
(f_0:f_1:f_2:f_3)
=
\sum_{0 \leq i,j,k \leq 1} w_{ijk} \, \mbf{P}_{ijk} \, b_i^1(s_0,s_1) \, b_j^1(t_0,t_1) \, b_k^1(u_0,u_1) 
\end{eqnarray} 
where $b_0^1(s_0,s_1) = s_0 - s_1$ and $b_1^1(s_0,s_1) = s_1$. 

\subsection{Motivation and organization}

To motivate our work and contributions, we list four questions of interest for applications that are formalized and answered 
throughout the paper.

\begin{question}
\label{Q: constraints}
What constraints should be imposed on the control points to ensure the existence of weights that render $\phi$ birational?
\end{question}

\begin{question}
\label{Q: inverse}
If $\phi$ is birational, how can we compute $\phi^{-1}$?
\end{question}

\begin{question}
\label{Q: distance}
How far is $\phi$ from birationality? 
how can we compute a birational approximation?
\end{question} 

\begin{question}
\label{Q: deformation}
How can we deform birational maps while preserving birationality?% it birationally to another birational map?
\end{question}

The first two questions are answered in \cref{sec: hexahedral} - \cref{sec: tripod}, namely in \cref{type pyramidal} - \cref{type tripod} and \cref{theorem: inverse hexahedral}, \cref{theorem: inverse pyramidal} - \cref{theorem: inverse tripod}, which represent the technical core of the paper. 
Each section follows the same structure, focusing on one of the four classes of trilinear birational maps: \textit{hexahedral}, \textit{pyramidal}, \textit{scaffold}, and \textit{tripod}, illustrated in \cref{fig: constraints}.
The main results of these sections, \cref{theorem: tensor hexahedral}, \cref{theorem: tensor pyramidal} - \cref{theorem: tensor tripod}, characterize birationality using a rank-one condition on a $2\times 2\times 2$ tensor. 
Remarkably, birationality can be determined using linear algebra methods, making it suitable for numerical computations. 
From our results, in \cref{sec: manip} we devise effective strategies for approximating and manipulating birational trilinear volumes, which allow us to answer the last two questions.
Additionally, in \cref{examples} we illustrate these methods through several examples, that emphasize how they are meaningful for applications.

\section{Preliminaries}
\label{preliminaries} 

In this section, we introduce the necessary notation and concepts for our analysis. 
We close it by proving a characterization of the existence of a linear syzygy between the defining polynomials of $\phi$, that relies on matrix rank. 

\subsection{Parametric and boundary varieties}
\label{definitions}

The concepts of parametric lines and surfaces help to describe the geometry of a trilinear rational map. 
We always work with the Zariski topology.

\begin{definition}
\label{parametric}
A parametric $s$-surface is the closure of $\phi\,(\P^1\times \P^1)$ for the specialization at $(s_0:s_1) = (\alpha_0:\alpha_1)$, for any parameter $(\alpha_0:\alpha_1)\in \P^1$.  
A parametric $s$-line is $\phi\,(\P^1)$ for the specializations $(t_0:t_1) = (\beta_0:\beta_1)$ and $(u_0:u_1) = (\gamma_0:\gamma_1)$, for any two parameters $(\beta_0:\beta_1)\in \P^1$ and $(\gamma_0:\gamma_1)\in \P^1$. 
With the obvious modifications, we define the $t$- and $u$-surfaces, as well as the $t$- and $u$-lines.
\end{definition}

It is straightforward to verify that the parametric surfaces are either planes or quadrics. More precisely, they correspond to the image of a bilinear parametrization $\P^1\times\P^1 \dashrightarrow \P^3$. 
 
Let $\A^3 \cong U\subset \PPPe$ be the affine space determined by $s_0\not=0$, $t_0\not=0$, and $u_0\not=0$. 
In this chart, the control points are the images of the vertices of the unit cube $[0,1]^3\subset U$. 
%Therefore, $\phi$ can be conveniently interpreted as a rational transformation of the unit cube. 
The concepts of boundary surfaces and lines are inherent to this point of view. 
To gain geometric intuition, \cref{fig: boundaries} illustrates these objects.

\begin{definition}
\label{def: boundaries}
The boundary surfaces, denoted by $\Sigma_i, T_j, Y_k$ for each $0\leq i,j,k\leq 1$, are the parametric surfaces defined by the supporting planes of the facets of the unit cube. 
The boundary lines, denoted by $s_{jk},t_{ik},u_{ij}$ for each $0\leq i,j,k\leq 1$, are the parametric lines defined by the supporting lines of the edges of the unit cube. 
\end{definition}

\begin{figure}
	\centering
	\includegraphics[width=\textwidth]{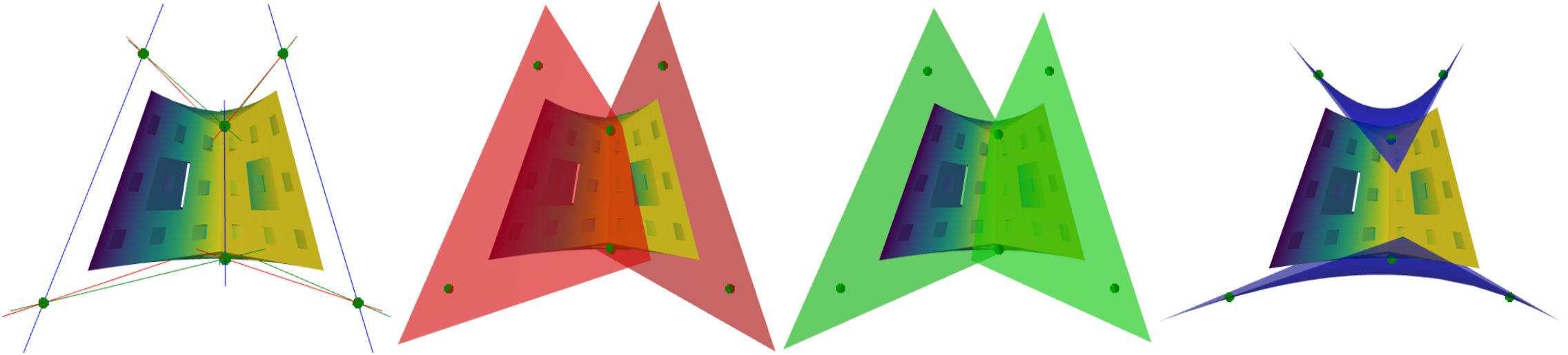}
	\caption{A deformation of a model, enclosed inside the unit cube, by means of a trilinear rational map. Leftmost: the boundary lines $s_{jk}$ (red), $t_{ik}$ (green), and $u_{ij}$ (blue), for each $0\leq i,j,k\leq 1$. From left to right: the boundary surfaces $\Sigma_0,\Sigma_1$ (red), $T_0,T_1$ (green), and $Y_0,Y_1$ (blue).}
	\label{fig: boundaries}
\end{figure}

Notice that, based on the previous observation about parametric surfaces, the boundary surfaces are always either planes or quadrics. 
It can occur that a boundary surface (resp$.$ line) is not a surface (resp$.$ line), but a curve (resp$.$ point) due to a contraction. 
To avoid this scenario, we always require the following nondegeneracy property. 
Notice that it is not at all strict, since it is satisfied by a general trilinear rational map.

\begin{property}
\label{property: nondegeneracy}
The following conditions are satisfied by a trilinear rational map:
\begin{enumerate}
\item For every $0\leq i,j,k\leq 1$, $\Sigma_i$, $T_j$, and $Y_k$ are smooth surfaces, pairwise distinct
\item For every $0\leq i,j,k\leq 1$, $s_{jk}$, $t_{ik}$, and $u_{ij}$ are lines, pairwise distinct
\end{enumerate}
\end{property}

In order to simplify the notation, we adopt the standard monomial basis in $R_{(1,1,1)}$ instead of the Bernstein basis. 
These two formulations are equivalent, and their analysis is hence identical. 

\begin{notation}
\label{notation}
We adopt the following conventions:
\begin{enumerate}
\setlength\itemsep{2pt}
\item We always assume that $\phi:\PPPe\dashrightarrow \P^3$ is the trilinear rational map defined by 
$$
\mbf{f} = (f_0,f_1,f_2,f_3) 
= 
\sum_{0\leq i,j,k \leq 1} w_{ijk} \, \mbf{P}_{ijk} \, s_i t_j u_k
$$
for some $\mbf{P}_{ijk} = (1,x_{ijk},y_{ijk},z_{ijk})$ in $\R^4$ and $w_{ijk}$ in $\R$, for each $0\leq i,j,k\leq 1$ 
\item $\Sigma_i$ (resp$.$ $T_j$ and $Y_k$) is the closure of $\phi(\P^1\times\P^1)$ for the specialization at $s_i = 0$ (resp$.$ $t_j = 0$ and $u_k = 0$), and $s_{jk}$ (resp$.$ $t_{ik}$ and $u_{ij}$) is $\phi(\P^1)$ for the specializations at $t_j = u_k = 0$ (resp$.$ $s_i = u_k = 0$ and $s_i = t_j = 0$)
\item If $\Sigma_i$ is a plane, it is defined by the linear form 
$
\sigma_i = \bs{\sigma}_i \cdot \mbf{x}^T 
$ 
for some $\bs{\sigma}_i = (\sigma_{0i}, \sigma_{1i}, \sigma_{2i}, \sigma_{3i})$ in $\R^4$. If it is a quadric, it is defined by the form $\sigma_i = \mbf{x} \cdot \bs{\sigma}_i \cdot \mbf{x}^T$ for some symmetric $4\times 4$ matrix in $\R^{4\times 4}$
\item Analogously, we denote by $\tau_j,\upsilon_k$ the defining polynomials of $T_j,Y_k$, and $\bs{\tau}_j,\bs{\upsilon}_k$ their associated vectors and symmetric matrices
%\item $\mathfrak{N} = (s_0,s_1)\cap (t_0,t_1)\cap (u_0,u_1)$ is the irrelevant ideal in $R$
\end{enumerate}
\end{notation}

A straightforward numerical criterion to determine whether \(\Sigma_i\) (or any other boundary surface) is either a plane or a quadric involves computing the rank of the \(4 \times 4\) matrix \((\mbf{P}_{i00}^T, \mbf{P}_{i10}^T, \mbf{P}_{i01}^T, \mbf{P}_{i11}^T)\):
if the rank is \(4\), \(\Sigma_i\) is a quadric, and if the rank is \(3\), \(\Sigma_i\) is a plane.  
Note that Property \(2.3\) ensures lower ranks are excluded. 

\subsection{Trilinear birational maps}
\label{trilinear birational}

Let $\mbf{x} = (x_0,x_1,x_2,x_3)$ be the homogeneous variables in $\P^3$. 
If $\phi$ is birational, the inverse $\phi^{-1}$ has the form 
\begin{eqnarray}
\label{eq: inverse map}
\phi^{-1} : \P^3 &\dashrightarrow & \P^1 \times \P^1 \times \P^1
\\[2pt] 
\nonumber
(x_0:x_1:x_2:x_3)
&\mapsto 
&
(A_0:
A_1)
\times
(B_0:
B_1)
\times
(C_0:
C_1)
\ , 
\end{eqnarray} 
where $A_i = A_i(\mbf{x})$ (resp$.$ $B_j = B_j(\mbf{x})$ and $C_k = C_k(\mbf{x})$) are homogeneous of the same degree for $i = 0,1$ (resp$.$ $j = 0,1$ and $k = 0,1$), without a common factor. 

\begin{definition}
If $\phi$ is birational, the type of $\phi$ is the triple $(\deg A_i, \deg B_j, \deg C_k)$ in $\Z^3$.
\end{definition} 

The defining polynomials of $\phi^{-1}$ are either linear or quadratic, and each type determines a class of trilinear birational maps  \cite{trilinear}. 
Hence, there are only four possibilities up to permutation: $(1,1,1)$, $(1,1,2)$, $(1,2,2)$, and $(2,2,2)$.  
On the other hand, the composition $\phi^{-1}\circ\phi :(\P^1)^3\dasharrow(\P^1)^3$ yields the identity on some open set of $\PPP$. 
Namely, $\phi^{-1}\circ \phi$ is generically given by
\begin{eqnarray*}
(s_0:s_1)
\times
(t_0:t_1)
\times
(u_0:u_1)
&
\mapsto
&
(A_0(\mbf{f}):A_1(\mbf{f}))
\times
(B_0(\mbf{f}):B_1(\mbf{f}))
\times
(C_0(\mbf{f}):C_1(\mbf{f}))
\ .
\end{eqnarray*} 
Therefore, the $2\times 2$ determinants in $R[x_0,x_1,x_2,x_3] = \R[s_0,s_1,t_0,t_1,u_0,u_1,x_0,x_1,x_2,x_3]$
\begin{equation}
\label{algebraic relations}
\begin{vmatrix}
s_0 & s_1 \\ 
A_0 & A_1
\end{vmatrix}
\ \ ,\ \
\begin{vmatrix}
t_0 & t_1 \\ 
B_0 & B_1
\end{vmatrix}
\ \ ,\ \
\begin{vmatrix}
u_0 & u_1 \\ 
C_0 & C_1
\end{vmatrix}
\end{equation}
vanish with the specializations $x_n \mapsto f_n$ for each $0\leq n\leq 3$. 
In particular, they represent algebraic relations satisfied by the defining polynomials of $\phi$.  
More explicitly, if $\sigma_0,\sigma_1$ (resp$.$ $\tau_0,\tau_1$ and $\upsilon_0,\upsilon_1$) are linear, then the first (resp$.$ second and third) relation is a syzygy of degree $(1,0,0)$ (resp$.$ $(0,1,0)$ and $(0,0,1)$). 
Recall that a tuple $\bs{\pi} = (\pi_0,\pi_1,\pi_2,\pi_3)$ in $R^4$ of homogeneous polynomials of the same degree is a \textit{syzygy} of $\mbf{f}$ if 
$$
\langle
\bs{\pi},\mbf{f}
\rangle 
= 
\bs{\pi} \cdot \mbf{f}^T
= 
\pi_0 f_0 + \pi_1 f_1 + \pi_2 f_2 + \pi_3 f_3 = 0
\ ,
$$ 
where $\langle -,-\rangle$ stands for the usual scalar product. 
The $\Z^3$-degree of the polynomials in $\bs{\pi}$ is the \textit{degree} of the syzygy. 
Equivalently, the syzygies of $\mbf{f}$ are identified with the linear polynomials in  $R[x_0,x_1,x_2,x_3]$ that vanish after the specializations $x_n \mapsto f_n$ for each $0\leq n\leq 3$. 
Hence, the first (resp$.$ second and third) determinant in  \cref{algebraic relations} can be regarded as a syzygy.
On the other hand, if the defining polynomials of $\phi^{-1}$ are quadratic, the determinants in \cref{algebraic relations} correspond to more complicated relations in the defining ideal of the Rees algebra associated to $\phi$ (see \cite{trilinear}).  

Geometrically, the determinants in \cref{algebraic relations} define the implicit equations of the parametric surfaces. 
More explicitly, given a general $(s_0:s_1) = (\lambda_0:\lambda_1)$ in $\P^1$, the equation of the corresponding $s$-surface in $\P^3$ is 
$$
\begin{vmatrix}
\lambda_0 & \lambda_1 \\ 
A_0 & A_1
\end{vmatrix}
= 
0
\ ,
$$
and similarly for the $t$- and $u$-surfaces. 
In particular, the  $s$-surfaces (resp$.$ $t$- and $u$-surfaces) form the  pencil, or linear system, of surfaces spanned by $\Sigma_0,\Sigma_1$ (resp$.$ $T_0,T_1$ and $Y_0,Y_1$).

\begin{figure}
	\centering
	\includegraphics[width=\textwidth]{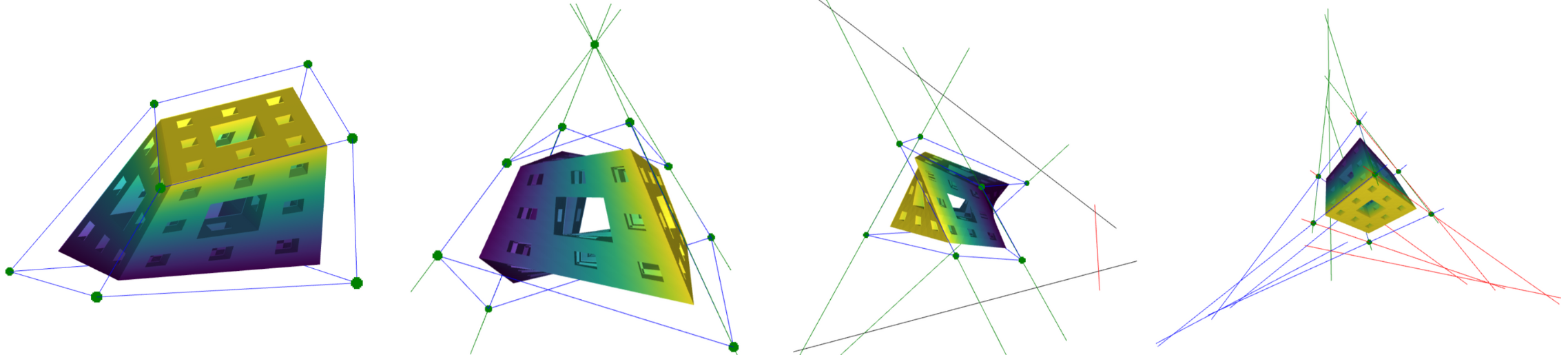}
	\caption{The distinct geometric constraints on the control points that are necessary for birationality. From left to right: hexahedral, pyramidal, scaffold, and tripod.}
	\label{fig: constraints}
\end{figure}

Regarding the construction of birational maps, the challenge lies in reconciling the necessary algebraic relations involved in birationality with the representation of $\phi$ relying on control points and weights. 
As we shall prove, these relations impose geometric constraints on the control points, that are formalized in the following four classes of trilinear rational maps: \textit{hexahedral}, \textit{pyramidal}, \textit{scaffold}, and \textit{tripod} (see \cref{def: hexahedral}, \cref{def: pyramidal}, \cref{scaffold}, and \cref{tripod}). 
Each of these classes are depicted in \cref{fig: constraints}, and respectively correspond to birational maps of type $(1,1,1)$, $(1,1,2)$, $(1,2,2)$, and $(2,2,2)$ (see \cref{type pyramidal}, \cref{type scaffold}, and \cref{type tripod}). 
More precisely, by \cite[Theorem 4.1]{trilinear} the locus of trilinear birational maps has eight irreducible components, and each component consists of the birational maps of a fixed type.  
In \cref{sec: hexahedral} - \cref{sec: tripod}, we prove that the loci of hexahedral, pyramidal, scaffold, and tripod birational maps contain open subsets of these irreducible components.

A natural question is whether maps from one class can be continuously deformed into another. The answer is yes. For example, starting from a pyramidal birational map, the control points of the quadric boundary surfaces can be specialized to planes, producing a map of type \((1,1,1)\). However, this degeneration is not a general hexahedral map, as the boundary lines for the previously quadratic parameter now intersect at a point, a condition inherited from pyramidal maps. In the paper, we focus on constructing general birational maps in each class, which are more relevant in applications. For a full classification of degenerations between classes, we refer the reader to \cite{trilinear}.

\subsection{Linear syzygies}
\label{linear syz} 
 
By ``linear syzygies'' we mean syzygies of degrees $(1,0,0)$, $(0,1,0)$, and $(0,0,1)$. 
The following lemma characterizes the existence of a linear syzygy of $\mbf{f}$ by means of a matrix rank condition. 
It is stated for syzygies of degree $(1,0,0)$, but it can be easily reformulated for syzygies of degrees $(0,1,0)$ and $(0,0,1)$ (see \cref{linear syz t u}). 

\begin{lemma} 
\label{lemma: linear syz s}
Let $\phi$ be dominant and $\Sigma_0,\Sigma_1$ be planes.
Then, $\mbf{f}$ has a syzygy of degree $(1,0,0)$ if and only if
\begin{equation}
\label{matrix: syz s}
\rank
\begin{pmatrix}
w_{100} \, \langle \boldsymbol{\sigma}_0 , \mbf{P}_{100} \rangle 
&
w_{110} \, \langle \boldsymbol{\sigma}_0 , \mbf{P}_{110} \rangle
&
w_{101} \, \langle \boldsymbol{\sigma}_0 , \mbf{P}_{101} \rangle
&
w_{111} \, \langle \boldsymbol{\sigma}_0 , \mbf{P}_{111} \rangle
\\
w_{000} \, \langle \boldsymbol{\sigma}_1 , \mbf{P}_{000} \rangle
&
w_{010} \, \langle \boldsymbol{\sigma}_1 , \mbf{P}_{010} \rangle
&
w_{001} \, \langle \boldsymbol{\sigma}_1 , \mbf{P}_{001} \rangle
&
w_{011} \, \langle \boldsymbol{\sigma}_1 , \mbf{P}_{011} \rangle
\end{pmatrix}
=
1
\ 
.
\end{equation}
Moreover, if \eqref{matrix: syz s} holds we find a point $(\alpha_0 : \alpha_1)$ in $\P_\R^1$ such that 
\begin{equation}
- \alpha_0 \, w_{1jk} \, \langle \boldsymbol{\sigma}_0 , \mbf{P}_{1jk} \rangle
=
\alpha_1 \, w_{0jk} \, \langle \boldsymbol{\sigma}_1 , \mbf{P}_{0jk} \rangle
\end{equation}
for every $0 \leq j,k \leq 1$. Then, any syzygy of degree $(1,0,0)$ of $\mbf{f}$ is proportional to 
\begin{equation}
\boldsymbol{\sigma} = \boldsymbol{\sigma}(s_0,s_1)
=
\alpha_0 \, \bs{\sigma}_0 \, s_0 
+ 
\alpha_1 \, \bs{\sigma}_1 \, s_1 
\ .
\end{equation}
\end{lemma}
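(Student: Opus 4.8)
The plan is to translate the syzygy condition into a system of bilinear equations in $(s_0,s_1)$ and then extract the rank-one criterion. A syzygy of degree $(1,0,0)$ of $\mbf{f}$ has the shape $\bs{\pi}(s_0,s_1) = \bs{\pi}_0 s_0 + \bs{\pi}_1 s_1$ for vectors $\bs{\pi}_0, \bs{\pi}_1 \in \C^4$, and the condition $\langle \bs{\pi}, \mbf{f} \rangle = 0$ is a polynomial identity in $R$ of $\Z^3$-degree $(2,1,1)$. First I would expand $\langle \bs{\pi}, \mbf{f}\rangle$ in the monomial basis $s_i s_{i'} t_j u_k$ and collect coefficients. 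The key algebraic observation is that $\langle \bs{\pi}, \mbf{f}\rangle$ is divisible by $f_0 f_1 f_2 f_3$-independent data only through the specific linear forms $\langle \bs{\pi}_0, \mbf{P}_{1jk}\rangle$ and $\langle \bs{\pi}_1, \mbf{P}_{0jk}\rangle$; more precisely, I expect the coefficient of $s_1^2 t_j u_k$ to force $\langle \bs{\pi}_1, \mbf{P}_{1jk}\rangle w_{1jk} = 0$ and symmetrically $\langle \bs{\pi}_0, \mbf{P}_{0jk}\rangle w_{0jk} = 0$, for all $j,k$. Since $\phi$ is dominant, not all $w_{1jk}$ (resp. $w_{0jk}$) vanish and the relevant inner products cannot all vanish, so $\bs{\pi}_0$ must be a defining form of the plane through $\mbf{P}_{000},\mbf{P}_{010},\mbf{P}_{001},\mbf{P}_{011}$ — that is, $\bs{\pi}_0$ is proportional to $\bs{\sigma}_1$ — and $\bs{\pi}_1$ is proportional to $\bs{\sigma}_0$. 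Writing $\bs{\pi}_0 = \alpha_1 \bs{\sigma}_1$ and $\bs{\pi}_1 = \alpha_0 \bs{\sigma}_0$ after absorbing scalars, the syzygy takes exactly the claimed form $\bs{\sigma}(s_0,s_1) = \alpha_0 \bs{\sigma}_0 s_0 + \alpha_1 \bs{\sigma}_1 s_1$, and the remaining middle coefficients (those of $s_0 s_1 t_j u_k$) give precisely the equations
$$
-\alpha_0 \, w_{1jk} \, \langle \bs{\sigma}_0, \mbf{P}_{1jk}\rangle = \alpha_1 \, w_{0jk}\, \langle \bs{\sigma}_1, \mbf{P}_{0jk}\rangle \quad (0\le j,k\le 1).
$$

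Next I would recognize this set of four equations as the statement that the vectors $\bigl(w_{1jk}\langle\bs{\sigma}_0,\mbf{P}_{1jk}\rangle\bigr)_{jk}$ and $\bigl(w_{0jk}\langle\bs{\sigma}_1,\mbf{P}_{0jk}\rangle\bigr)_{jk}$ in $\C^4$ are linearly dependent, i.e. the $2\times 4$ matrix in \eqref{matrix: syz s} has rank at most $1$; and dominance rules out rank $0$ (if the whole matrix vanished, one checks $\bs{\sigma}_0 s_0$ alone would be a syzygy forcing a contraction contradicting \cref{property: nondegeneracy}, or more directly both rows cannot be zero because $\Sigma_0,\Sigma_1$ are honest distinct planes meeting $\mbf f$ properly). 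Conversely, if the matrix has rank exactly $1$, the two rows are proportional, so there exists $(\alpha_0:\alpha_1)\in\P^1_\R$ — real because the entries are real — realizing the displayed equations; reversing the expansion shows $\bs{\sigma} = \alpha_0\bs{\sigma}_0 s_0 + \alpha_1\bs{\sigma}_1 s_1$ annihilates $\mbf f$. The uniqueness up to scalar follows because, as argued above, every syzygy of degree $(1,0,0)$ must have $\bs{\pi}_0 \parallel \bs{\sigma}_1$ and $\bs{\pi}_1 \parallel \bs{\sigma}_0$, and the pair of proportionality constants is then pinned down up to a common scalar by the nonvanishing middle equations.

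The main obstacle I anticipate is the bookkeeping in the monomial expansion of $\langle \bs{\pi},\mbf{f}\rangle$ and, relatedly, justifying cleanly that dominance forces the ``pure'' coefficients to vanish in the way claimed rather than merely constraining them. Concretely, one must argue that the coefficient of $s_1^2 t_j u_k$ in $\langle\bs\pi,\mbf f\rangle$ is $w_{1jk}\langle\bs\pi_1,\mbf P_{1jk}\rangle$ (note the index shift coming from $b_1^1(s_0,s_1)=s_1$, or in monomial terms from $f_0,\dots,f_3$ being linear in $s_1$), and that requiring all four of these to vanish, together with $\Sigma_1$ being a genuine plane (Property $2.3$, detected by the rank-$3$ criterion stated after \cref{notation}), forces $\bs\pi_1 \parallel \bs\sigma_0$; the case analysis to exclude degenerate vanishing patterns of the weights is where care is needed. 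Once this is in place, everything else is linear algebra over $\R$: proportionality of two real vectors gives a real point of $\P^1$, and the reconstruction of the syzygy from $(\alpha_0:\alpha_1)$ is immediate by running the expansion backwards.
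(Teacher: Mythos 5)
Your route is essentially the paper's: split $\mbf{f}=\mbf{g}_0\,s_0+\mbf{g}_1\,s_1$, use the $s_0^2$- and $s_1^2$-coefficients of $\langle\bs{\pi},\mbf{f}\rangle$ to pin down $\bs{\pi}_0,\bs{\pi}_1$ up to scalar as normals of the two planar boundary surfaces, read the rank condition off the $s_0s_1$-coefficients, and use dominance to exclude a vanishing row. The genuine problem is the identification you make at the key step. From $w_{0jk}\,\langle\bs{\pi}_0,\mbf{P}_{0jk}\rangle=0$ for all $j,k$ you conclude that $\bs{\pi}_0$ is proportional to $\bs{\sigma}_1$, i.e.\ you take $\Sigma_1$ to be the plane through $\mbf{P}_{000},\mbf{P}_{010},\mbf{P}_{001},\mbf{P}_{011}$. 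The convention under which the lemma is stated (and which is used throughout the paper) is the opposite one: $\mbf{P}_{ijk}$ lies \emph{on} $\Sigma_i$, i.e.\ $\langle\bs{\sigma}_i,\mbf{P}_{ijk}\rangle=0$ — see \cref{construction: hexahedral}, where $\mbf{P}_{ijk}=\Sigma_i\cap T_j\cap Y_k$, the rank criterion stated right after \cref{notation}, and the proof of \cref{theorem: inverse hexahedral}. So the correct conclusion is $\bs{\pi}_0\propto\bs{\sigma}_0$ and $\bs{\pi}_1\propto\bs{\sigma}_1$ (same index), not the swap. I grant that the wording of item 2 of \cref{notation} can mislead here, but the lemma itself only makes sense with $\langle\bs{\sigma}_i,\mbf{P}_{ijk}\rangle=0$: under your reading the entries $w_{1jk}\,\langle\bs{\sigma}_0,\mbf{P}_{1jk}\rangle$ of \eqref{matrix: syz s} would vanish identically and the rank condition could not characterize anything.

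Because of this swap, your argument as written neither proves the statement nor is internally consistent: setting $\bs{\pi}_0=\alpha_1\bs{\sigma}_1$ and $\bs{\pi}_1=\alpha_0\bs{\sigma}_0$ gives the syzygy $\alpha_1\bs{\sigma}_1\,s_0+\alpha_0\bs{\sigma}_0\,s_1$, which is not the claimed form $\alpha_0\bs{\sigma}_0\,s_0+\alpha_1\bs{\sigma}_1\,s_1$, and the $s_0s_1t_ju_k$-coefficients then read $\alpha_1\,w_{1jk}\,\langle\bs{\sigma}_1,\mbf{P}_{1jk}\rangle+\alpha_0\,w_{0jk}\,\langle\bs{\sigma}_0,\mbf{P}_{0jk}\rangle=0$, i.e.\ they involve the matrix of \eqref{matrix: syz s} with $\bs{\sigma}_0$ and $\bs{\sigma}_1$ interchanged, not the displayed equations you claim to recover. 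Once you adopt $\langle\bs{\sigma}_i,\mbf{P}_{ijk}\rangle=0$, everything aligns and your proof becomes the paper's: $\bs{\pi}_i=\alpha_i\bs{\sigma}_i$, the mixed coefficients give exactly the four equations of the lemma, their solvability is the proportionality of the two rows of \eqref{matrix: syz s}, and dominance rules out a zero row (a zero row would make $\bs{\sigma}_i$ itself a syzygy, forcing the image into a hyperplane), which yields both rank one and uniqueness of $(\alpha_0:\alpha_1)$. Your concern about degenerate weight patterns is fair but is not treated in the paper either; the paper simply uses that $\mbf{g}_i$ parametrizes the plane $\Sigma_i$, so any vector annihilating it is proportional to $\bs{\sigma}_i$.
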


\begin{proof}
For each $i = 0,1$, we define 
$
\mbf{g}_i 
= 
\mbf{g}_i (t_0,t_1,u_0,u_1)
=
\sum_{0\leq j,k \leq 1} w_{ijk} \, \mbf{P}_{ijk} \, t_j u_k
$,  
so that we can easily write $\mbf{f} = \mbf{g}_0 \, s_0 + \mbf{g}_1 \, s_1$. 
Suppose that $\mbf{f}$ has a syzygy of the form 
$
\boldsymbol{\pi} 
= 
\boldsymbol{\pi}(s_0,s_1)
=  
\bs{\pi}_0 
\, 
s_0
+  
\bs{\pi}_1 
\, 
s_1 
$
for some $\boldsymbol{\pi}_0,\boldsymbol{\pi}_1$ in $\R^4$.
Then, we find 
\begin{gather*}
\langle 
\boldsymbol{\pi} 
, 
\mbf{f}
\rangle 
= 
s_0
\, 
\langle 
\boldsymbol{\pi}_0 
, 
\mbf{f}
\rangle 
+ 
s_1 
\, 
\langle 
\boldsymbol{\pi}_1 
, 
\mbf{f}
\rangle 
= 
s_0^2
\, 
\langle 
\boldsymbol{\pi}_0 
, 
\mbf{g}_0
\rangle 
+ 
s_0 s_1 \, 
( 
\langle 
\boldsymbol{\pi}_0 
, 
\mbf{g}_1
\rangle 
+ 
\langle 
\boldsymbol{\pi}_1 
, 
\mbf{g}_0
\rangle 
)
+ 
s_1^2 
\, 
\langle 
\boldsymbol{\pi}_1 
, 
\mbf{g}_1
\rangle 
= 
0 
\ .
\end{gather*}
It follows immediately that $\langle \boldsymbol{\pi}_i , \mbf{g}_i \rangle = 0$ for both $i=0,1$. 
By hypothesis, the boundary surfaces $\Sigma_0,\Sigma_1$ are planes. 
Therefore, $\mbf{g}_i$ defines a bilinear rational parametrization $\P^1\times\P^1\dashrightarrow \Sigma_i$, and any vector $\bs{\pi}$ satisfying $\langle \bs{\pi} , \mbf{g}_i \rangle = 0$ must be proportional to $\bs{\sigma}_i$. % as $\Sigma_i$ is defined by $\sigm$
In particular, we find $\boldsymbol{\pi}_i = \alpha_i \, \boldsymbol{\sigma}_i$ for some nonzero $\alpha_i$ in $\R$. 
Hence, we obtain 
\begin{gather*} 
\alpha_0 
\langle 
\boldsymbol{\sigma}_0 
, 
\mbf{g}_1
\rangle 
+ 
\alpha_1 
\langle 
\boldsymbol{\sigma}_1 
, 
\mbf{g}_0
\rangle 
=  
\sum_{0\leq j,k \leq 1} 
\left( 
\alpha_0 
\, 
w_{1jk}
\, 
\langle
\boldsymbol{\sigma}_0
,
\mbf{P}_{1jk}
\rangle
+
\alpha_1
\, 
w_{0jk}
\, 
\langle
\boldsymbol{\sigma}_1
,
\mbf{P}_{0jk}
\rangle
\right) 
t_j u_k
= 0
\ ,
\end{gather*} 
which is satisfied for some $\alpha_0,\alpha_1$ if and only if \cref{matrix: syz s} holds. Moreover, since $\phi$ is dominant no row in \cref{matrix: syz s} is identically zero.
Thus, $(\alpha_0:\alpha_1) \in \P_\R^1$ is unique, and $\boldsymbol{\pi}$ is proportional to $\boldsymbol{\sigma}$. 
\end{proof}

\begin{remark}
\label{linear syz t u}
After the obvious modifications, we derive analogous results to \cref{lemma: linear syz s} for syzygies of degrees $(0,1,0)$ and $(0,0,1)$. 
The rank conditions involved in the existence of the syzygies are respectively 
\begin{gather}
\label{matrix: syz t}
\rank
\begin{pmatrix}
w_{010} \, \langle \boldsymbol{\tau}_0 , \mathbf{P}_{010} \rangle
&
w_{110} \, \langle \boldsymbol{\tau}_0 , \mathbf{P}_{110} \rangle
&
w_{011} \, \langle \boldsymbol{\tau}_0 , \mathbf{P}_{011} \rangle
&
w_{111} \, \langle \boldsymbol{\tau}_0 , \mathbf{P}_{111} \rangle
\\
w_{000} \, \langle \boldsymbol{\tau}_1 , \mathbf{P}_{000} \rangle
&
w_{100} \, \langle \boldsymbol{\tau}_1 , \mathbf{P}_{100} \rangle
&
w_{001} \, \langle \boldsymbol{\tau}_1 , \mathbf{P}_{001} \rangle
&
w_{101} \, \langle \boldsymbol{\tau}_1 , \mathbf{P}_{101} \rangle
\end{pmatrix} 
= 
1
\ ,\\[4pt] 
\label{matrix: syz u}
\rank
\begin{pmatrix}
w_{001} \, \langle \boldsymbol{\upsilon}_0 , \mathbf{P}_{001} \rangle
&
w_{101} \, \langle \boldsymbol{\upsilon}_0 , \mathbf{P}_{101} \rangle
&
w_{011} \, \langle \boldsymbol{\upsilon}_0 , \mathbf{P}_{011} \rangle
&
w_{111} \, \langle \boldsymbol{\upsilon}_0 , \mathbf{P}_{111} \rangle
\\
w_{000} \, \langle \boldsymbol{\upsilon}_1 , \mathbf{P}_{000} \rangle
&
w_{100} \, \langle \boldsymbol{\upsilon}_1 , \mathbf{P}_{100} \rangle
&
w_{010} \, \langle \boldsymbol{\upsilon}_1 , \mathbf{P}_{010} \rangle
&
w_{110} \, \langle \boldsymbol{\upsilon}_1 , \mathbf{P}_{110} \rangle
\end{pmatrix}
=
1
\ .
\end{gather}
To elaborate, we demonstrate the equivalence between the existence of a syzygy of degree $(0,1,0)$ and \cref{matrix: syz t}.
Redefine $\mbf{g}_j = \mbf{g}_j(s_0,s_1,u_0,u_1) = \sum_{0\leq i,j\leq 1} w_{ijk} \, \mbf{P}_{ijk} \, s_i u_k$, and let $\bs{\pi} = \bs{\pi}_0 \, t_0 + \bs{\pi}_1 \, t_1$ be a syzygy of $\mbf{f}$. 
Following the lines of the proof of \cref{lemma: linear syz s}, the scalar product $\langle 
\bs{\pi} 
, 
\mbf{f}
\rangle$ equals 
\begin{equation}
\label{syzygy betas}
t_0 
t_1 
( 
\langle 
\boldsymbol{\pi}_0 
, 
\mbf{g}_1
\rangle 
+ 
\langle 
\boldsymbol{\pi}_1 
, 
\mbf{g}_0
\rangle 
)
= 
t_0 t_1
\sum_{0\leq i,k\leq 1} 
\left(
\beta_0
w_{i1k}
\langle 
\bs{\pi}_0 , 
\mbf{P}_{i1k}
\rangle 
+  
\beta_1
w_{i0k}
\langle 
\bs{\pi}_1 , 
\mbf{P}_{i0k}
\rangle 
\right)
s_i u_k
= 0
\ ,
\end{equation}
since the vanishing $\langle \bs{\pi} , \mbf{f}\rangle = 0$ readily implies $\bs{\pi}_j = \beta_j \, \bs{\tau}_j$ for some nonzero $\beta_0,\beta_1$. 
In particular, such $(\beta_0,\beta_1)\in \C^2$ exists if and only if \cref{syzygy betas} holds.
\end{remark}

\section{Hexahedral birational maps}
\label{sec: hexahedral}

In this section, we study the first and simplest family of trilinear birational maps: the class of hexahedral birational maps.  

\begin{definition}
\label{def: hexahedral}
A trilinear rational map is hexahedral if all the boundary surfaces are planes.
\end{definition}

Hexahedral birational maps are the direct generalization to 3D of the 2D quadrilateral birational maps studied in \cite{sederberg2D,EFFECTIVE_BIGRAD}, since the minimal graded free resolution of their base ideal is Hilbert-Burch \cite[Proposition 6.2]{trilinear}. 

\subsection{Geometric constraints}
\label{geometric constraint: hexahedral}

We begin describing the geometric constraints on the control points that are necessary for birationality. 
The following observation is straightforward.

\begin{fact}
\label{fact: hexahedral}
If $\phi$ is birational of type $(1,1,1)$, then it is also hexahedral.  
\end{fact}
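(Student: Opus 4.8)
The plan is to show that a birational map of type $(1,1,1)$ must have all six boundary surfaces equal to planes; by \cref{def: hexahedral} this is exactly what it means to be hexahedral. The key input is the geometric description, given earlier in the excerpt, of how the inverse relates to the parametric surfaces: for a birational map, the $s$-surfaces form the pencil spanned by $\Sigma_0$ and $\Sigma_1$, whose members are cut out by the equations
\[
\begin{vmatrix} \lambda_0 & \lambda_1 \\ A_0 & A_1 \end{vmatrix} = 0
\]
as $(\lambda_0:\lambda_1)$ ranges over $\P^1$, and analogously for the $t$- and $u$-surfaces with $B_0,B_1$ and $C_0,C_1$. Since $\phi$ is of type $(1,1,1)$, the polynomials $A_0,A_1$ (resp. $B_0,B_1$, resp. $C_0,C_1$) are linear in $\mathbf{x}$, so each member of the pencil is defined by a linear form $\lambda_0 A_1 - \lambda_1 A_0$. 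In particular $\Sigma_0 = \{A_0 = 0\}$ and $\Sigma_1 = \{A_1 = 0\}$ are planes (they are honest planes, not lower-dimensional, by \cref{property: nondegeneracy}(1), which is assumed throughout), and likewise $T_0,T_1,Y_0,Y_1$ are planes. Hence all six boundary surfaces are planes and $\phi$ is hexahedral.

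First I would recall that $\phi$ being birational forces $\phi$ to be dominant, and that the algebraic relations \eqref{algebraic relations} are exactly the three determinants displayed above, which vanish after the substitution $x_n \mapsto f_n$. For type $(1,1,1)$, the excerpt already observes that these three determinants are, respectively, syzygies of $\mathbf{f}$ of degrees $(1,0,0)$, $(0,1,0)$, $(0,0,1)$; in particular $\mathbf{f}$ possesses a syzygy of degree $(1,0,0)$. Then I would invoke the geometric identification: the equation of the $s$-surface at parameter $(\lambda_0:\lambda_1)$ is $\lambda_0 A_1 - \lambda_1 A_0 = 0$. Specializing $(\lambda_0:\lambda_1)=(1:0)$ gives $\Sigma_1 = \{A_0 = 0\}$ up to a relabeling, and $(\lambda_0:\lambda_1)=(0:1)$ gives $\Sigma_0 = \{A_1 = 0\}$, each a hyperplane in $\P^3$ since $\deg A_i = 1$. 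Repeating with $B_j$ and $C_k$ handles $T_j$ and $Y_k$.

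The only genuine subtlety — and the step I expect to be the main obstacle, though it is minor — is justifying that $A_0,A_1$ are honestly nonzero linear forms (so that their zero loci really are planes rather than all of $\P^3$), and that $\Sigma_0 \neq \Sigma_1$ and similarly for the other pairs, so that the boundary surfaces are genuine distinct planes as required for the map to be nondegenerate. Non-vanishing follows because $A_0,A_1$ have no common factor (by definition of the inverse) and because a parametric surface is the closure of the image of a bilinear map $\P^1\times\P^1\dashrightarrow\P^3$, which is at least one-dimensional under \cref{property: nondegeneracy}; if some $A_i$ were identically zero or the two spanned the same plane, the pencil of $s$-surfaces would be constant, contradicting the nondegeneracy of the $s$-surfaces together with the fact that $\phi$ depends nontrivially on $(s_0:s_1)$. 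I would phrase this as: since $\phi$ is dominant and \cref{property: nondegeneracy} holds, the $s$-surfaces sweep out $\P^3$ and hence genuinely vary, forcing $A_0, A_1$ to be linearly independent linear forms; thus $\Sigma_0,\Sigma_1$ are distinct planes. The same argument applies verbatim to $T_0,T_1$ and $Y_0,Y_1$, completing the proof.
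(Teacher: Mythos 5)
Your proposal is correct and takes essentially the same route as the paper: type $(1,1,1)$ makes $A_i$, $B_j$, $C_k$ linear, the determinants in \cref{algebraic relations} cut out the parametric surfaces, hence every parametric surface — in particular every boundary surface — is a plane, which is \cref{def: hexahedral}. Your additional care about $A_0,A_1$ being independent nonzero linear forms (and the harmless index swap, since $\Sigma_1=\{A_1=0\}$ rather than $\{A_0=0\}$) only makes explicit what the paper delegates to \cref{property: nondegeneracy} and to the boundary surfaces being parametric.
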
 

\begin{proof}
If $\phi$ is birational of type $(1,1,1)$ the polynomials $A_i$, $B_j$, and $C_k$ in \cref{algebraic relations} are all linear. 
Since the relations in \cref{algebraic relations} define the equations of the parametric surfaces for each of the three parameters, it follows that, for any specialization of these, the corresponding surfaces satisfy a linear equation. 
Therefore, all the parametric surfaces are planes. 
Since the boundary surfaces are also parametric, $\phi$ is hexahedral.
\end{proof}

In practice, the control points of a hexahedral map can be given indirectly through the boundary planes $\Sigma_i$, $T_j$, $Y_k$. 
More explicitly, we have the following construction. 

\begin{construction} 
\label{construction: hexahedral}
The control points of a hexahedral rational map can be generated as follows: 
\vskip2pt
\begin{enumerate}
\item As input, for each $0\leq i,j,k \leq 1$ choose pairwise distinct planes $\Sigma_i$, $T_j$, $Y_k$, or equivalently, vectors $\bs{\sigma}_i$, $\bs{\tau}_j$, $\bs{\upsilon}_k$ in $\R^4$, whose intersection is an affine point. 
\item As output, compute $\mbf{P}_{ijk} = \Sigma_i \cap T_j \cap Y_k$ for each $0\leq i,j,k\leq 1$
\end{enumerate}  
\end{construction} 

\subsection{Birational weights and tensor rank criterion}
\label{tensor criterion: hexahedral}

\begin{notation}
\label{hexahedral: deltas}
If $\phi$ is hexahedral, according to \cref{{construction: hexahedral}} each control point $\mbf{P}_{ijk}$ can be expressed as the wedge of the vectors $\bs{\sigma}_i$, $\bs{\tau}_j$, and $\bs{\upsilon}_k$ defining the boundary planes. More specifically, we have 
$
\mbf{P}_{ijk} = \Delta_{ijk}^{-1} \ \bs{\sigma}_i \wedge \bs{\tau}_j \wedge \bs{\upsilon}_k 
$, 
where 
\begin{equation}
\label{eq: Deltas hexahedral}
\Delta_{ijk} = 
\begin{vmatrix}
\sigma_{1i} & \sigma_{2i} & \sigma_{3i} \\ 
\tau_{1j} & \tau_{2j} & \tau_{3j} \\ 
\upsilon_{1k} & \upsilon_{2k} & \upsilon_{3k} 
\end{vmatrix} 
\end{equation}
for each $0\leq i,j,k \leq 1$. 
In particular, $\Delta_{ijk} = 0$ if and only if $\mbf{P}_{ijk}$ lies at the plane $x_0 = 0$ in $\P^3$, a possibility is excluded since by definition $\mbf{P}_{ijk} = (1,x_{ijk},y_{ijk},z_{ijk})$. 
\end{notation}

The following theorem establishes our birationality criterion for the class of hexahedral rational maps, based on a rank-one condition on a $2\times 2\times 2$ tensor. 
It relates birationality, the existence of linear syzygies, and tensor rank. 
Moreover, it is the key ingredient for our constructive results. 
Recall that a tensor $T$ in $\C^{r_1}\otimes\ldots\otimes\C^{r_n}$ has \textit{rank one} if $T = \bs{\alpha}_1\otimes \ldots \otimes \bs{\alpha}_n$ for some $\bs{\alpha}_i \in \C^{r_i}$. 
For a global perspective about tensor rank, we  refer the reader to  \cite{tensorbook,ottaviani_tensor_rank,CPD_2}. 

\begin{theorem}
\label{theorem: tensor hexahedral}
Let $\phi$ be hexahedral. 
Then, $\phi$ is birational of type $(1,1,1)$ if and only if the $2\times 2\times 2$ tensor 
\begin{equation}
W
=
%\label{chapter: constructive - eq: tensor birational (1,1,1)}
\left( 
\frac{w_{ijk}}{\Delta_{ijk}}
\right)_{0\leq i,j,k \leq 1}
\end{equation} 
has rank one. 
\end{theorem}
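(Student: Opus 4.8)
The plan is to reduce birationality of a hexahedral map to the simultaneous existence of linear syzygies in all three degrees $(1,0,0)$, $(0,1,0)$, $(0,0,1)$, and then translate each of the three rank conditions from \cref{lemma: linear syz s} and \cref{linear syz t u} into a single rank-one condition on the tensor $W$. First I would argue the easy implication: if $\phi$ is birational of type $(1,1,1)$, then by \cref{fact: hexahedral} (which is automatic here) and the discussion around \cref{algebraic relations}, the three determinants in \cref{algebraic relations} are genuine syzygies of $\mbf{f}$ of degrees $(1,0,0)$, $(0,1,0)$, $(0,0,1)$. So all three of the matrices \cref{matrix: syz s}, \cref{matrix: syz t}, \cref{matrix: syz u} have rank $1$. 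For the converse, I would use that $\phi$ hexahedral has a Hilbert--Burch resolution of its base ideal (as noted after \cref{def: hexahedral}, citing \cite[Proposition 6.2]{trilinear}); three independent linear syzygies in the three separate degrees force the map to be parametrized by the maximal minors of the $3\times 4$ syzygy matrix, and a dimension/degree count then shows the inverse has entries of degree $(1,1,1)$, i.e. type $(1,1,1)$. Alternatively — and this is likely the cleaner route for the paper's own narrative — I would invoke whichever earlier characterization of type-$(1,1,1)$ birationality in terms of linear syzygies is available, so that birational of type $(1,1,1)$ $\iff$ $\mbf{f}$ has a linear syzygy in each of the three degrees.

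The core computation is the equivalence between "$W$ has rank one" and "all three rank conditions \cref{matrix: syz s}, \cref{matrix: syz t}, \cref{matrix: syz u} hold simultaneously". Here I would use \cref{hexahedral: deltas}: since $\mbf{P}_{ijk} = \Delta_{ijk}^{-1}\,\bs{\sigma}_i\wedge\bs{\tau}_j\wedge\bs{\upsilon}_k$, the pairings appearing in those matrices simplify dramatically. For instance, $\langle\bs{\sigma}_0,\mbf{P}_{1jk}\rangle$ is (up to the factor $\Delta_{1jk}^{-1}$) the $4\times 4$ determinant $\det(\bs{\sigma}_0,\bs{\sigma}_1,\bs{\tau}_j,\bs{\upsilon}_k)$ — note the $\bs{\sigma}_1$ comes from the index $i=1$ — which I would denote by something like $D_{jk}$, and crucially it is \emph{independent of the order}: the same determinant (up to sign) appears when computing $\langle\bs{\sigma}_1,\mbf{P}_{0jk}\rangle$. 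So the $(j,k)$ entry of the first row of \cref{matrix: syz s} is $\frac{w_{1jk}}{\Delta_{1jk}}\cdot(\pm\text{same scalar})$ and the entry below it is $\frac{w_{0jk}}{\Delta_{0jk}}\cdot(\pm\text{same scalar})$. The common nonzero scalar $D_{jk}$ — nonzero because Property~\ref{property: nondegeneracy} guarantees the four planes are in general position — can be factored out of the column. Thus \cref{matrix: syz s} has rank $1$ iff the $2\times 4$ matrix with columns indexed by $(j,k)$ and rows $(W_{1jk})$, $(W_{0jk})$ has rank $1$, where $W_{ijk} = w_{ijk}/\Delta_{ijk}$. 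By the same argument, \cref{matrix: syz t} has rank $1$ iff the analogous $2\times 4$ matrix with columns indexed by $(i,k)$ and rows $(W_{i1k})$, $(W_{i0k})$ has rank $1$; and likewise for \cref{matrix: syz u}.

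To finish I would observe that these three rank-one conditions are exactly the statements that each of the three $2\times (2\times 2)$ flattenings of the tensor $W = (W_{ijk})$ has rank $1$, and invoke the standard fact that a $2\times 2\times 2$ tensor has rank one if and only if all three of its flattenings have rank $\le 1$ (equivalently, one can write it directly: the $s$-flattening having rank $1$ gives $W_{1jk} = \lambda\, W_{0jk}$ for all $j,k$, the $t$-flattening gives $W_{i1k} = \mu\, W_{i0k}$, the $u$-flattening gives $W_{ij1} = \nu\, W_{ij0}$, and these three together let one solve for all eight entries as products $\alpha_i\beta_j\gamma_k$, i.e. $W = \bs{\alpha}\otimes\bs{\beta}\otimes\bs{\gamma}$). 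Combining: $\phi$ birational of type $(1,1,1)$ $\iff$ $\mbf{f}$ has linear syzygies in all three degrees $\iff$ \cref{matrix: syz s}, \cref{matrix: syz t}, \cref{matrix: syz u} all have rank $1$ $\iff$ all three flattenings of $W$ have rank $1$ $\iff$ $W$ has rank one.

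The main obstacle I anticipate is bookkeeping the signs and the precise identification of the $4\times 4$ determinant $D_{jk}$ that factors out of each column, and making airtight the claim that the \emph{same} scalar appears in both rows of a given column of \cref{matrix: syz s} — this is where the antisymmetry of $\bs{\sigma}_i\wedge\bs{\tau}_j\wedge\bs{\upsilon}_k$ under swapping the $\sigma$-slot does the work, and it must be checked carefully that the $\Delta_{ijk}^{-1}$ normalization factors are absorbed correctly so that exactly $w_{ijk}/\Delta_{ijk}$ survives. A secondary point requiring care is the converse direction of the syzygy-to-birationality step: having a linear syzygy in each degree is necessary for type $(1,1,1)$, but I must make sure (using the Hilbert--Burch structure from \cite{trilinear}, or an explicit degree count on the resulting inverse) that it is also sufficient, rather than merely citing it loosely.
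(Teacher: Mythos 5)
Your proposal is correct and follows essentially the same route as the paper: birationality of type $(1,1,1)$ is reduced to the existence of linear syzygies in the three degrees (the paper cites \cite[Theorem 6.1]{trilinear} for this equivalence, which is the ``earlier characterization'' you were hoping to invoke, so the Hilbert--Burch detour is unnecessary), each rank condition of \cref{lemma: linear syz s} and \cref{linear syz t u} is rewritten via $\mbf{P}_{ijk} = \Delta_{ijk}^{-1}\,\bs{\sigma}_i\wedge\bs{\tau}_j\wedge\bs{\upsilon}_k$ so that the common $4\times 4$ determinant factors out of each column, and the three resulting conditions are exactly rank-one conditions on the flattenings of $W$, concluded by \cite[Proposition 3.11]{ottaviani_tensor_rank}. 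Your sign/nonvanishing bookkeeping (the factored scalar is nonzero under \cref{property: nondegeneracy}) matches what the paper implicitly uses, so there is no gap.
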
 

\begin{proof} 
By \cite[Theorem $6.1$]{trilinear}, it follows that $\phi$ is birational of type $(1,1,1)$ if and only if $\mathbf{f}$ admits syzygies of degrees $(1,0,0)$, $(0,1,0)$, and $(0,0,1)$. 
Equivalently, by \cref{lemma: linear syz s} and \cref{linear syz t u}, $\phi$ is birational if and only if the matrix rank conditions \cref{matrix: syz s}, \cref{matrix: syz t}, and \cref{matrix: syz u} are simultaneously satisfied. 
We now rewrite these rank conditions so that the matrices involved have the same entries. 
Namely, for any $0\leq i,j,k \leq 1$ we can write  
\begin{equation} 
\Delta_{0jk} 
\, 
\langle \boldsymbol{\sigma}_1 , \mathbf{P}_{0jk} \rangle
=
\boldsymbol{\sigma}_1
\wedge
\boldsymbol{\sigma}_{0}
\wedge
\boldsymbol{\tau}_{j}
\wedge
\boldsymbol{\upsilon}_{k}
= 
- 
\Delta_{1jk}
\,
\Delta_{1jk}^{-1}
\,
\boldsymbol{\sigma}_0
\wedge
\boldsymbol{\sigma}_1
\wedge
\boldsymbol{\tau}_j
\wedge
\boldsymbol{\upsilon}_k 
= 
- 
\Delta_{1jk} 
\, 
\langle 
\boldsymbol{\sigma}_0 
, 
\mathbf{P}_{1jk}
\rangle 
\ .
\end{equation}
Hence, we have 
$$
\begin{vmatrix}
\langle \bs{\sigma}_0 , \mbf{P}_{1jk} \rangle & - \langle \bs{\sigma}_1 , \mbf{P}_{0jk} \rangle\\
\Delta_{0jk} & \Delta_{1jk}
\end{vmatrix} 
= 0
\ ,
$$
and \cref{matrix: syz s} can be equivalently written as
\begin{equation}
\rank
\begin{pmatrix}
w_{100} \, {\Delta_{100}}^{-1}
&
w_{110} \, {\Delta_{110}}^{-1}
&
w_{101} \, {\Delta_{101}}^{-1}
&
w_{111} \, {\Delta_{111}}^{-1}
\\
w_{000} \, {\Delta_{000}}^{-1}
&
w_{010} \, {\Delta_{010}}^{-1}
&
w_{001} \, {\Delta_{001}}^{-1}
&
w_{011} \, {\Delta_{011}}^{-1}
\end{pmatrix}
=
1
\ .
\end{equation} 
With a similar argument, we derive 
$$
\Delta_{i0k} 
\, 
\langle 
\boldsymbol{\tau}_1 
, 
\mathbf{P}_{i0k}
\rangle 
= 
- 
\Delta_{i1k} 
\, 
\langle 
\boldsymbol{\tau}_0 
, 
\mathbf{P}_{i1k}
\rangle 
\ \ ,\ \ 
\Delta_{ij0} 
\, 
\langle 
\boldsymbol{\upsilon}_1 
, 
\mathbf{P}_{ij0}
\rangle 
= 
- 
\Delta_{ij1} 
\, 
\langle 
\boldsymbol{\upsilon}_0 
, 
\mathbf{P}_{ij1}
\rangle 
\ ,
$$
and \cref{matrix: syz t}, \cref{matrix: syz u} can be respectively rewritten as 
\begin{gather}
%\label{chapter: constructive - eq: new rank condition t (1,1,1)}
\rank
\begin{pmatrix}
w_{010} \, {\Delta_{010}}^{-1}
&
w_{110} \, {\Delta_{110}}^{-1} 
&
w_{011} \, {\Delta_{011}}^{-1}
&
w_{111} \, {\Delta_{111}}^{-1}
\\
w_{000} \, {\Delta_{000}}^{-1}
&
w_{100} \, {\Delta_{100}}^{-1}
&
w_{001} \, {\Delta_{001}}^{-1} 
&
w_{101} \, {\Delta_{101}}^{-1} 
\end{pmatrix}
=
1
\ ,\\
%\label{chapter: constructive - eq: new rank condition u (1,1,1)}
\rank
\begin{pmatrix}
w_{001} \, {\Delta_{001}}^{-1} 
&
w_{101} \, {\Delta_{101}}^{-1}
&
w_{011} \, {\Delta_{011}}^{-1}
&
w_{111} \, {\Delta_{111}}^{-1}
\\
w_{000} \, {\Delta_{000}}^{-1}
&
w_{100} \, {\Delta_{100}}^{-1}
&
w_{010} \, {\Delta_{010}}^{-1}
&
w_{110} \, {\Delta_{110}}^{-1}
\end{pmatrix}
=
1
\ .
\end{gather} 
In particular, the matrices involved in the three rank conditions above are the three flattenings of the tensor $W$ in the statement (see  \cite{tensorbook,ottaviani_tensor_rank,CPD_2}). 
By \cite[Proposition $3.11$]{ottaviani_tensor_rank}, they have all rank one if and only if $W$ has rank one, and the statement follows.
\end{proof} 

\begin{corollary} 
\label{corollary: weight formulas hexahedral}
Let $\phi$ be hexahedral. Then, $\phi$ is birational of type $(1,1,1)$ if and only if 
\begin{equation}
\label{eq: weight formulas hexahedral}
w_{ijk} = \alpha_i \, \beta_j \, \gamma_k \, \Delta_{ijk}
\end{equation}
for some $(\alpha_0:\alpha_1)\times (\beta_0:\beta_1)\times (\gamma_0:\gamma_1)$ in $(\P_\R^1)^3$. 
\end{corollary}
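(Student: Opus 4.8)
The plan is to deduce \cref{corollary: weight formulas hexahedral} directly from \cref{theorem: tensor hexahedral}, treating it essentially as an unwinding of what rank one means for a $2\times 2\times 2$ tensor. By \cref{theorem: tensor hexahedral}, $\phi$ is birational of type $(1,1,1)$ if and only if the tensor $W = (w_{ijk}/\Delta_{ijk})_{0\leq i,j,k\leq 1}$ has rank one. So the whole content of the corollary is the elementary equivalence: a tensor $W\in\C^2\otimes\C^2\otimes\C^2$ has rank one if and only if there exist vectors $\bs{\alpha}=(\alpha_0,\alpha_1)$, $\bs{\beta}=(\beta_0,\beta_1)$, $\bs{\gamma}=(\gamma_0,\gamma_1)$ with $W_{ijk}=\alpha_i\beta_j\gamma_k$, i.e. $w_{ijk}/\Delta_{ijk}=\alpha_i\beta_j\gamma_k$, which is exactly \eqref{eq: weight formulas hexahedral} after multiplying by $\Delta_{ijk}$ (recall $\Delta_{ijk}\neq 0$ by \cref{hexahedral: deltas}). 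This is just the definition of rank one recalled right before \cref{theorem: tensor hexahedral}, with $r_1=r_2=r_3=2$.

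The two remaining points to address are that the scaling data can be taken to live in $(\P_\R^1)^3$ rather than in $(\C^2\setminus\{0\})^3/\sim$. First, the projective normalization: if $W=\bs{\alpha}\otimes\bs{\beta}\otimes\bs{\gamma}$, then none of $\bs{\alpha},\bs{\beta},\bs{\gamma}$ can be zero (else $W=0$, contradicting that $\phi$ is dominant, so that no flattening row of $W$ vanishes — this is already used in the proof of \cref{theorem: tensor hexahedral} via \cref{lemma: linear syz s}), so each determines a point of $\P^1$; moreover the decomposition is unique up to rescaling $(\bs{\alpha},\bs{\beta},\bs{\gamma})\mapsto(\lambda\bs{\alpha},\mu\bs{\beta},\nu\bs{\gamma})$ with $\lambda\mu\nu=1$, so the triple $(\bs{\alpha}:\bs{\beta}:\bs{\gamma})\in(\P^1)^3$ is well defined, and conversely any such triple (with a choice of representatives and, say, the product of the three chosen scalars absorbed into one factor) yields a valid $W$. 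Second, reality: the weights $w_{ijk}$ and the determinants $\Delta_{ijk}$ are real by \cref{notation}, so $W$ is a real tensor; a real rank-one complex tensor of this shape has a real rank-one decomposition, since from $w_{ijk}/\Delta_{ijk}=\alpha_i\beta_j\gamma_k$ one can read off the $\alpha_i$ (up to global scale) as ratios of real quantities once one fixes a nonzero slice, hence choose them real — concretely, pick indices with $w_{i_0j_0k_0}\neq 0$, set $\alpha_{i}=w_{ij_0k_0}/\Delta_{ij_0k_0}$, $\beta_j=w_{i_0jk_0}/\Delta_{i_0jk_0}$, $\gamma_k=w_{i_0j_0k}/\Delta_{i_0j_0k}$, all real, and verify $\alpha_i\beta_j\gamma_k$ reproduces $W$ up to the real constant $w_{i_0j_0k_0}/\Delta_{i_0j_0k_0}$, which is then absorbed into one factor. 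This places the parameter in $(\P^1_\R)^3$.

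So the proof will be short: invoke \cref{theorem: tensor hexahedral}, recall the definition of rank one for a $2\times2\times2$ tensor to get $w_{ijk}/\Delta_{ijk}=\alpha_i\beta_j\gamma_k$, clear the nonzero denominators $\Delta_{ijk}$, and argue the normalization and reality remarks above to conclude $(\alpha_0:\alpha_1)\times(\beta_0:\beta_1)\times(\gamma_0:\gamma_1)\in(\P^1_\R)^3$; the converse direction (such a factorization forces $W$ rank one, hence birationality) is immediate from the same equivalence. I do not expect any genuine obstacle here — the only mild subtlety is the bookkeeping for reality and for the well-definedness of the projective triple, and making sure the non-vanishing of $W$ (used to guarantee all three factors are nonzero) is correctly attributed to dominance of $\phi$.
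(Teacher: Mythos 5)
Your proposal is correct and matches the paper's (implicit) argument: the corollary is stated without a separate proof precisely because it is the immediate unwinding of the rank-one condition in \cref{theorem: tensor hexahedral}, i.e.\ $w_{ijk}/\Delta_{ijk}=\alpha_i\beta_j\gamma_k$ after clearing the nonzero $\Delta_{ijk}$. Your extra bookkeeping on reality and the projective normalization is fine (the only harmless slip is that the absorbed constant in your explicit real decomposition is $\bigl(w_{i_0j_0k_0}/\Delta_{i_0j_0k_0}\bigr)^2$ rather than $w_{i_0j_0k_0}/\Delta_{i_0j_0k_0}$, still a nonzero real number), so no gap remains.
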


The choice of weights presented in \cref{corollary: weight formulas hexahedral}, as well as those for subsequent classes discussed in the following sections, can be understood through the following geometric interpretation.
The inverse rational map for each parameter corresponds to the linear parametrization of the parametric surfaces, explicitly given by \cref{algebraic relations}, as detailed in \cref{trilinear birational}. 
These parametrizations are determined by the images of three distinct points: \((1:0)\) and \((0:1)\),  which by assumption always map to the boundary surfaces, and a third point determined by the weights. 
Ensuring the rank-one condition guarantees that the three pencils of surfaces satisfy the algebraic relations necessary for birationality. 
Specifically, the weights determine the base locus of \(\phi^{-1}\), and the related contractions and blow-ups.

\subsection{Inverse and base locus}
\label{inverse: hexahedral}

We now address the most relevant computational advantage of birational maps: the efficient and exact computation of preimages. 
The following result presents explicit formulas for the inverse of a hexahedral birational map. 
In the remainder of the paper, we say that $\phi$ \textit{contracts}, or \textit{blows down}, a variety $V \subset (\P^1)^3$ if $\dim V > \dim \phi(V)$.

\begin{theorem} 
\label{theorem: inverse hexahedral}
Let $\phi$ be hexahedral with weights as \cref{eq: weight formulas hexahedral}. 
Then, $\phi^{-1}$ is given by 
\begin{equation}
\label{eq: inverse formula hexahedral}
(x_0:x_1:x_2:x_3) \mapsto 
(
-\alpha_1 \, \sigma_1 : \alpha_0 \, \sigma_0
)
\times 
(
-\beta_1 \, \tau_1 : \beta_0 \, \tau_0
)
\times 
(
-\gamma_1 \, \upsilon_1 : \gamma_0 \, \upsilon_0
)
\ .
\end{equation}
Additionally, we have the following:
\vskip2pt 
\begin{enumerate}
\item The base locus of $\phi$ is the curve $C$ in $(\P^1)^3$ defined by the ideal  
\begin{gather*}
\label{eq: base ideal hexahedral}
\left(
\sum
_{
0\leq j,k\leq 1}
w_{0jk}
\langle 
\bs{\sigma}_{1}
,
\mbf{P}_{0jk}
\rangle
\, t_{j} u_{k}
\, ,\, 
\sum
_{
0\leq i,k\leq 1}
w_{i0k}
\langle 
\bs{\tau}_{1}
,
\mbf{P}_{i0k}
\rangle
\, s_{i} u_{k}
\, ,\, 
\sum
_{
0\leq i,j\leq 1}
w_{ij0}
\langle 
\bs{\upsilon}_{1}
,
\mbf{P}_{ij0}
\rangle
\, s_{i} t_{j}
\right)
\end{gather*}
\item The base locus of $\phi^{-1}$ is the union of the lines $s = \Sigma_0 \cap \Sigma_1$, $t = T_0 \cap T_1$, and $u = Y_0\cap Y_1$
\item $\phi$ contracts the surfaces in $\PPP$ defined by the first, second, and third generators of the ideal in $1$ respectively to the lines $s,t$, and $u$
\item If $s,t$, and $u$ are pairwise skew, $\phi$ blows up $C$ to the unique (smooth) quadric through them
\end{enumerate}
\end{theorem}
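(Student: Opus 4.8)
The first task is to establish the inverse formula \eqref{eq: inverse formula hexahedral}: since $\phi$ is birational of type $(1,1,1)$, by \cref{corollary: weight formulas hexahedral} the weights satisfy $w_{ijk} = \alpha_i\beta_j\gamma_k\Delta_{ijk}$, and \cref{theorem: tensor hexahedral} (together with \cref{lemma: linear syz s} and \cref{linear syz t u}) produces the three linear syzygies. I would trace through the proof of \cref{lemma: linear syz s}: with $w_{1jk} = \alpha_1\beta_j\gamma_k\Delta_{1jk}$ and $w_{0jk} = \alpha_0\beta_j\gamma_k\Delta_{0jk}$, and using the wedge identity $\Delta_{0jk}\langle\bs\sigma_1,\mbf P_{0jk}\rangle = -\Delta_{1jk}\langle\bs\sigma_0,\mbf P_{1jk}\rangle$ derived there, the point $(\alpha_0:\alpha_1)$ of that lemma is exactly the pair $(\alpha_0:\alpha_1)$ appearing in the weight factorization. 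Hence the degree-$(1,0,0)$ syzygy is $\bs\sigma = \alpha_0\bs\sigma_0 s_0 + \alpha_1\bs\sigma_1 s_1$, i.e.\ $\langle\bs\sigma_0 s_0 + \bs\sigma_1 s_1 \cdot(\alpha_0,\alpha_1)\text{-weighted}, \mbf f\rangle = 0$; this says precisely that $(-\alpha_1\sigma_1 : \alpha_0\sigma_0)$, evaluated at $\mbf x = \mbf f$, gives back $(s_0:s_1)$ up to the factor coming from $\langle\bs\sigma_0,\mbf f\rangle$ vs.\ $\langle\bs\sigma_1,\mbf f\rangle$. Repeating for $t$ and $u$ and checking the three pencils are compatible (no common factor, by \cref{property: nondegeneracy}) yields \eqref{eq: inverse formula hexahedral}, and that these three pairs have no common factor confirms the type is $(1,1,1)$.

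For item $1$, the base locus of $\phi$ is $V(f_0,f_1,f_2,f_3)$; I would write $\mbf f = \mbf g_0 s_0 + \mbf g_1 s_1$ and observe that on the base locus, the existence of the syzygy $\bs\sigma$ forces the vanishing of $\langle\bs\sigma_1,\mbf g_0\rangle$ and $\langle\bs\sigma_0,\mbf g_1\rangle$ separately (as in the lemma's proof these are the off-diagonal terms), and more directly that $f_0,\dots,f_3$ lie in the ideal generated by the three listed quadrics together with $s$-, $t$-, $u$-linear-forms in a way that collapses; the cleanest route is to note that the three generators are exactly $\langle\bs\sigma_1,\mbf g_0\rangle$-type expressions $\sum w_{0jk}\langle\bs\sigma_1,\mbf P_{0jk}\rangle t_j u_k$ etc., and to verify (a) each vanishes on $C := V(\mbf f)$ because $\langle\bs\sigma_1,\mbf f\rangle = \sigma_1(\mbf f) = 0$ whenever $\mbf f = 0$ and expanding $\sigma_1(\mbf f)$ by trilinearity separates into the $s_0$- and $s_1$-parts which must vanish independently, and (b) conversely the three quadrics cut out a curve of the right degree (a standard Bézout/multidegree count: three forms of multidegrees $(0,1,1),(1,0,1),(1,1,0)$ in $(\P^1)^3$), and that $\phi$ is defined away from it. Item $2$ is the dual statement applied to $\phi^{-1}$ using \eqref{eq: inverse formula hexahedral}: the base locus of $\phi^{-1}$ is where all three pairs $(\sigma_0,\sigma_1)$, $(\tau_0,\tau_1)$, $(\upsilon_0,\upsilon_1)$ vanish simultaneously, but since $\sigma_0 = \sigma_1 = 0$ already defines the line $\Sigma_0\cap\Sigma_1$ and similarly for the others, and no two of these lines can coincide (by \cref{property: nondegeneracy} the six boundary planes are pairwise distinct), the base scheme is the union $s\cup t\cup u$; one should check it is exactly this union and not larger, which follows because the six linear forms span (generically) a $6$-dimensional space in the $4$-dimensional space of linear forms on $\P^3$, so the vanishing of any five already forces a point, and the scheme structure matches.

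For item $3$: the surface defined by the first generator $\sum w_{0jk}\langle\bs\sigma_1,\mbf P_{0jk}\rangle t_j u_k = 0$ in $(\P^1)^3$ is swept out by $s$-lines; I would restrict $\phi$ to this surface and show every point maps into $\Sigma_0\cap\Sigma_1 = s$: indeed on this surface the ``$\mbf g_1$-part against $\bs\sigma_0$'' vanishes, and combined with the syzygy relation this forces both $\sigma_0(\mbf f) = 0$ and $\sigma_1(\mbf f) = 0$, placing the image on the line $s$. The analogous computations handle the $t$- and $u$-generators. For item $4$, assuming $s,t,u$ are pairwise skew, there is a unique smooth quadric $Q\subset\P^3$ containing all three (three pairwise skew lines lie on a unique quadric, which is necessarily smooth and of one ruling); since $\phi^{-1}$ has base locus $s\cup t\cup u$ contained in $Q$, and $\phi^{-1}$ contracts $Q$ (being a quadric through the full base locus, its proper transform drops dimension — or dually, $\phi$ blows up $C$ to $Q$), I would identify $\phi(C)$ with $Q$ by a dimension and degree count: $C$ is a curve, $\phi(C)$ must be a surface of degree $2$ (the image of the base curve of a Cremona-type map with these contractions), and it contains $s,t,u$, hence equals $Q$.

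\textbf{Main obstacle.} The delicate point is item $4$ — rigorously establishing that $\phi$ \emph{blows up} $C$ (rather than merely having $C$ in its base locus) and that the exceptional image is precisely the quadric $Q$. This requires understanding the resolution of the indeterminacies, i.e.\ the graph of $\phi$, and showing the fiber over a general point of $Q$ is one-dimensional. I would handle this via the inverse: for $q\in Q$ general, $\phi^{-1}$ is undefined along $q$ only if $q\in s\cup t\cup u$, so instead I argue that the indeterminacy locus of $\phi^{-1}$ is exactly $s\cup t\cup u$ and that $\phi^{-1}$ contracts $Q$ to $C$ — this is where I expect to lean on \cite{trilinear} or on a direct Rees-algebra / blow-up computation, computing that the strict transform of $Q$ under the blow-up of $\P^3$ along the (reduced) base scheme maps to $C$ with one-dimensional fibers. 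The hypothesis that $s,t,u$ are pairwise skew is what guarantees $Q$ is smooth and irreducible, so the contraction is onto a genuine curve; without it $Q$ could degenerate to a pair of planes and the statement would fail, which is why it is singled out.
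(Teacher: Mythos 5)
Your overall route for the inverse formula and for items 1 and 3 is essentially the paper's: the weight factorization of \cref{corollary: weight formulas hexahedral} produces the three linear syzygies $\bs{\sigma},\bs{\tau},\bs{\upsilon}$ (the paper gets them in one line by writing $\mbf{f}=\bs{\sigma}(s_0,s_1)\wedge\bs{\tau}(t_0,t_1)\wedge\bs{\upsilon}(u_0,u_1)$, which you may find cleaner than re-tracing \cref{lemma: linear syz s}), and the three listed generators are exactly the bilinear forms $g,h,l$ with $\sigma_1(\mbf{f})=s_0\,g$, etc. But two of your intermediate claims are wrong as stated. First, the syzygy does \emph{not} force $\langle\bs{\sigma}_1,\mbf{g}_0\rangle$ and $\langle\bs{\sigma}_0,\mbf{g}_1\rangle$ to vanish separately; it forces the two rows of \cref{matrix: syz s}, i.e.\ the bilinear forms $g_0,g_1$, to be \emph{proportional}, and it is this proportionality that makes both $\sigma_0(\mbf{f})=s_1g_0$ and $\sigma_1(\mbf{f})=s_0g_1$ vanish on the surface $g=0$ (item 3) and that gives the inclusion of the base locus in $V(g,h,l)$ (item 1) even at points where $s_0=0$ or $s_1=0$. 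For the reverse inclusion the relevant fact is that the six vectors $\bs{\sigma}_i,\bs{\tau}_j,\bs{\upsilon}_k$ span $\R^4$, so the simultaneous vanishing of all six pullbacks forces $\mbf{f}=0$; a multidegree count does not by itself identify $V(g,h,l)$ with the base locus, and the paper in fact proves the stronger statement that the listed ideal \emph{is} the saturation of $(f_0,\ldots,f_3)$. Second, in item 2 the indeterminacy locus of a map into $\P^1\times\P^1\times\P^1$ is where \emph{at least one} of the pairs $(\sigma_0,\sigma_1)$, $(\tau_0,\tau_1)$, $(\upsilon_0,\upsilon_1)$ vanishes, i.e.\ the union $s\cup t\cup u$ — not where "all three pairs vanish simultaneously" — and the remark about six linear forms spanning a $6$-dimensional subspace of a $4$-dimensional space does not parse; once the correct definition is used, item 2 is immediate from \cref{eq: inverse formula hexahedral} and needs no further argument.

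The genuine gap is item 4, which you flag as the delicate point but then defer to \cite{trilinear} or an unspecified strict-transform/Rees-algebra computation; as written nothing is proved. The paper settles it with a short direct argument you are missing: since $\phi\circ\phi^{-1}$ is the identity on an open set and each $f_n$ has degree $(1,1,1)$ while $A_i,B_j,C_k$ are linear in $\mbf{x}$, one gets
\begin{equation*}
\bigl(f_0(A_0,A_1,B_0,B_1,C_0,C_1),\ldots,f_3(A_0,A_1,B_0,B_1,C_0,C_1)\bigr)=q\cdot(x_0,x_1,x_2,x_3)
\end{equation*}
for a single quadratic form $q$. Because $(A_0,A_1)$ vanishes on $s$, $(B_0,B_1)$ on $t$, $(C_0,C_1)$ on $u$, and each $f_n$ is linear in each block of variables, the left-hand side vanishes on $s\cup t\cup u$, hence $q$ does; when $s,t,u$ are pairwise skew, $q=0$ is therefore the unique (smooth) quadric through the three lines, and this identity exhibits that quadric as the image of the blown-up base curve $C$. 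Your alternative "dimension and degree count" for $\phi(C)$ is not substantiated, so without the composition argument (or an equivalent explicit computation of the exceptional divisor) item 4 remains open in your proposal.
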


\begin{proof}
By \cref{corollary: weight formulas hexahedral}, a hexahedral rational map is birational if and only if the weights satisfy \cref{eq: weight formulas hexahedral} for some $(\alpha_0:\alpha_1)\times (\beta_0:\beta_1)\times (\gamma_0:\gamma_1)$ in $(\P_\R^1)^3$.  
In particular, we can write 
\begin{gather*}
\mbf{f} 
= 
\sum_{0\leq i,j,k \leq 1} 
w_{ijk} 
\, 
\mbf{P}_{ijk} 
\, 
s_i t_j u_k
= 
\sum_{0\leq i,j,k \leq 1} 
\left( 
\alpha_i
\, 
\beta_j
\, 
\gamma_k
\, 
\Delta_{ijk}
\right)
\, 
\left( 
\Delta_{ijk}^{-1}
\, 
\bs{\sigma}_i
\wedge
\bs{\tau}_j
\wedge 
\bs{\upsilon}_k
\right)
\, 
s_i t_j u_k 
= 
\\
\sum_{0 \leq i,j,k \leq 1} 
(\alpha_i \, \bs{\sigma}_i \, s_i)
\wedge
(\beta_i \, \bs{\tau}_j \, t_j)
\wedge 
(\gamma_k \, \bs{\upsilon}_k \,u_k) 
= 
\bs{\sigma}(s_0,s_1) 
\wedge 
\bs{\tau}(t_0,t_1) 
\wedge 
\bs{\upsilon}(u_0,u_1) 
\ ,
\end{gather*} 
where $\bs{\sigma}=\bs{\sigma}(s_0,s_1)$ is as in \cref{lemma: linear syz s} and $\bs{\tau}=\bs{\tau}(t_0,t_1)$, $\bs{\upsilon}=\bs{\upsilon}(u_0,u_1)$ are analogous. 
Therefore, we find the relations 
$$
\langle
\bs{\sigma} 
, 
\mbf{f} 
\rangle 
= 
\bs{\sigma} 
\wedge 
\bs{\sigma} 
\wedge 
\bs{\tau} 
\wedge
\bs{\upsilon} 
= 
0 
\ ,\
\langle
\bs{\tau} 
, 
\mbf{f} 
\rangle 
= 
\bs{\tau}
\wedge 
\bs{\sigma} 
\wedge 
\bs{\tau} 
\wedge
\bs{\upsilon} 
= 
0 
\ ,\ 
\langle
\bs{\upsilon} 
, 
\mbf{f} 
\rangle 
= 
\bs{\upsilon}
\wedge 
\bs{\sigma} 
\wedge 
\bs{\tau} 
\wedge
\bs{\upsilon} 
= 
0 
\ ,
$$
yielding independent linear syzygies. 
Therefore, $\phi^{-1}$ is given by \cref{eq: inverse formula hexahedral}. 

Now, we prove the remaining four claims. 
Let $i^*$ stand for the converse of a binary index, namely $0^* = 1$ and $1^* = 0$. 
%Additionally, let $I'$ be the ideal on the right-hand side of \cref{eq: base ideal hexahedral}. 
For each $i = 0,1$, we can write 
\begin{equation}
\label{eq: pullback of matrix syz}
\langle 
\boldsymbol{\sigma}_i 
, 
\mathbf{f} 
\rangle 
=
s_{i^*}
\sum_{0\leq j,k \leq 1}
w_{i^*jk} 
\, 
\langle 
\boldsymbol{\sigma}_i 
, 
\mathbf{P}_{i^*jk}
\rangle 
\, 
t_j
\, 
u_k 
= 
s_{i^*}
\, 
g_i(t_0,t_1,u_0,u_1)
\end{equation}
since by definition $\langle \boldsymbol{\sigma}_i , \mathbf{P}_{ijk}\rangle = 0$ for any $0\leq j,k\leq 1$.
The coefficients of $g_i$ (in the monomial basis) coincide with the entries of the $(i+1)$-th row of the matrix in \cref{matrix: syz s}. 
Since $\mbf{f}$ admits a syzygy of degree $(1,0,0)$, by \cref{lemma: linear syz s} the rank condition \cref{matrix: syz s} holds. 
%In particular, $g_0$ and $g_1$ are proportional to the first generator $g$ of $I'$. 
Hence, $g_0,g_1$ are proportional to some bilinear $g$ and 
$$
(
\langle \bs{\sigma}_0,\mbf{f}\rangle 
,
\langle \bs{\sigma}_1,\mbf{f}\rangle 
)
= 
(\sigma_0(f_0,f_1,f_2,f_3)
,
\sigma_1(f_0,f_1,f_2,f_3))
: \mathfrak{N}^\infty
= 
(g)
\ ,
$$
where $\mathfrak{N}=(s_0,s_1)\cap(t_0,t_1)\cap(u_0,u_1)$ is the irrelevant ideal in $R$. 
Geometrically, this means that $\phi$ contracts the surface in $(\P^1)^3$ defined by $g = 0$ to the line $s = \Sigma_0 \cap \Sigma_1$, defined by the ideal $(\sigma_0,\sigma_1)\subset \R[x_0,x_1,x_2,x_3]$. 
With parallel arguments, it follows that $\phi$ contracts the surfaces defined by the second and third generators of the ideal in $1$ to the lines $t$ and $u$, respectively.
By \cref{eq: pullback of matrix syz} and the analogous identities for the other two pairs of variables, this ideal is precisely the saturation of $B = (f_0,f_1,f_2,f_3)$ and defines the base locus of $\phi$. 
Furthermore, the base ideal (resp$.$ locus) of $\phi^{-1}$ is  
$
(\sigma_0,\sigma_1) \cap (\tau_0,\tau_1) \cap (\upsilon_0,\upsilon_1)
\subset \R[x_0,x_1,x_2,x_3]
$
(resp$.$ $s\cup t\cup u$ in $\P^3$). 
Finally, since the composition $\phi\circ \phi^{-1}:\P^3\dashrightarrow\P^3$ yields the identity in an open set, we find 
%on $\mbf{f} = (f_0,f_1,f_2,f_3)$ yield the vector of polynomials
$$ 
(
f_0(A_0,A_1,B_0,B_1,C_0,C_1)
\, ,\, 
\ldots 
\, ,\, 
f_3(A_0,A_1,B_0,B_1,C_0,C_1)
)
=
q\cdot 
(
x_0,x_1,x_2,x_3
)
$$ 
for some quadratic form $q = q(x_0,x_1,x_2,x_3)$ that vanishes at the three lines $s,t$, and $u$. 
If these are pairwise skew, then $q = 0$ is the unique quadric through the lines.
\end{proof}

\section{Pyramidal birational maps} 
\label{sec: pyramidal}

We now study the second family of birational maps: the class of pyramidal birational maps. 
This name is motivated by the geometry of the control points.

\begin{definition}
\label{def: pyramidal}
A trilinear rational map is pyramidal if for one of the three parameters:
\begin{enumerate}
\item The four boundary lines intersect at a point
\item The two boundary surfaces are quadrics
\end{enumerate} 
\end{definition}

\subsection{Geometric constraint}
\label{geometric constraint: pyramidal}

First, we prove that a general birational map of type either $(1,1,2)$, $(1,2,1)$, or $(2,1,1)$ is pyramidal.
Without loss of generality, we can restrict to $(1,1,2)$. 

\begin{proposition} 
\label{type pyramidal}
If $\phi$ is a general birational map of type $(1,1,2)$, then it is pyramidal. 
Moreover, 
$$
Y_0 \cap Y_1 = s \cup t \cup C
\ ,
$$ 
where $s = \Sigma_0 \cap \Sigma_1$, $t = T_0\cap T_1$, and $C$ is a plane conic that intersects both $s$ and $t$. 
\end{proposition}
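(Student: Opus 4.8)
The plan is to exploit the structure of a type-$(1,1,2)$ birational map, where the inverse has linear coordinates $A_0,A_1$ and $B_0,B_1$ in the first two factors and quadratic $C_0,C_1$ in the third. First I would observe that, as in \cref{fact: hexahedral}, the linearity of $A_i$ and $B_j$ forces the $\Sigma_i$ and $T_j$ to be planes (the first two determinants in \cref{algebraic relations} are syzygies of degrees $(1,0,0)$ and $(0,1,0)$), which gives half of condition \textbf{(2)}; the quadratic-ness of $C_0,C_1$, together with a dimension/degree count for the pencil of $u$-surfaces, should force $Y_0$ and $Y_1$ to be genuine quadrics rather than planes for a \emph{general} such map. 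This establishes that $\phi$ is pyramidal with the distinguished parameter being $u$. Condition \textbf{(1)}, that the four boundary lines $u_{ij}$ meet at a point, I would then derive from the decomposition of $Y_0 \cap Y_1$: each line $u_{ij}$ lies on both $Y_0$ and $Y_1$, hence in the intersection.

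The core of the argument is the claimed decomposition $Y_0 \cap Y_1 = s \cup t \cup C$. The idea is that $Y_0 \cap Y_1$, being the intersection of two quadrics in $\P^3$, is a curve of arithmetic genus $1$ and degree $4$ (a complete intersection $(2,2)$), i.e. generically an elliptic quartic, but here it must be reducible because it contains the lines $s = \Sigma_0 \cap \Sigma_1$ and $t = T_0 \cap T_1$. To see that $s \subset Y_0 \cap Y_1$: the boundary line $u_{ij}$ for $u_k = 0$ is $\phi$ of $\{s_i = t_j = 0\}$ — wait, more to the point, $s = \Sigma_0\cap\Sigma_1$ is the base locus component of $\phi^{-1}$ coming from the $(1,0,0)$-syzygy, and one checks via the algebraic relations \cref{algebraic relations} that the equation $C_0 = C_1 = 0$ (which cuts $Y_0\cap Y_1$ back on $\P^3$) is satisfied along $s$; concretely, the quadrics $\upsilon_0,\upsilon_1$ vanish on $s$ because the $u$-surface through a point of $s$ degenerates. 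Symmetrically $t \subset Y_0\cap Y_1$. Removing the degree-$2$ contribution of $s \cup t$ from the degree-$4$ curve leaves a residual conic $C$, which must be planar (a degree-$2$ curve of arithmetic genus $0$ spans a plane). That $C$ meets both $s$ and $t$ follows from connectedness of the complete intersection $Y_0 \cap Y_1$ (it is linked, so cannot have isolated components), or directly from a local computation at the points $\Sigma_0\cap\Sigma_1\cap\{$one of the $Y_k\}$.

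For the order of operations I would: (i) invoke \cite[Theorem 4.1 / 6.x]{trilinear} or \cref{lemma: linear syz s} to get that $\Sigma_i, T_j$ are planes and $Y_k$ are quadrics; (ii) use the first two relations in \cref{algebraic relations} to locate $s$ and $t$ as the base-locus lines and verify $s, t \subset Y_0\cap Y_1$; (iii) compute $\deg(Y_0 \cap Y_1) = 4$ and extract the residual conic $C$, arguing planarity and the intersection with $s,t$ by linkage; (iv) conclude that the $u_{ij}$ all lie on $C \cup s \cup t$ and pin down that they concur, giving pyramidality condition \textbf{(1)}.

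The main obstacle I anticipate is step (iii): rigorously extracting $C$ as a residual scheme and proving it is a reduced plane conic meeting $s$ and $t$, rather than, say, a double line or a conic disjoint from one of them. This requires care with the scheme structure of $Y_0\cap Y_1$ and possibly an explicit coordinate normalization — putting $\Sigma_0 = \{x_0=0\}$, $\Sigma_1 = \{x_1 = 0\}$, $T_0 = \{x_2 = 0\}$, $T_1 = \{x_3=0\}$ after a change of coordinates on $\P^3$, so that $s = \{x_0=x_1=0\}$ and $t = \{x_2 = x_3 = 0\}$ — and then writing $\upsilon_0, \upsilon_1$ in this frame to read off that each contains $x_0 x_1$ and $x_2 x_3$ in a controlled way, from which $C$ appears explicitly. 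The word "general" in the hypothesis is what lets me assume $C$ is reduced and irreducible and transverse to $s,t$, so I would be explicit about which genericity is used.
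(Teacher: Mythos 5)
Your opening step (linearity of $A_i,B_j$ forces $\Sigma_i,T_j$ to be planes, quadraticity of $C_k$ plus generality makes $Y_0,Y_1$ quadrics) is consistent with the paper, but the core of your plan has two genuine gaps. First, the concurrency of the four boundary $u$-lines (condition 1 of \cref{def: pyramidal}) cannot be reached the way you propose: the lines $u_{ij}$ are contained in $\Sigma_i\cap T_j$, not in $Y_0\cap Y_1$, so your step (iv), ``the $u_{ij}$ all lie on $C\cup s\cup t$,'' is false as stated. Concurrency is equivalent to $s\cap t\neq\emptyset$, i.e.\ to the linear dependence of $\sigma_0,\sigma_1,\tau_0,\tau_1$, and this does not follow from the degree/genus bookkeeping of the $(2,2)$-curve $Y_0\cap Y_1$: two skew lines together with a conic meeting one of them twice also has degree $4$ and arithmetic genus $1$, so linkage alone cannot force $s$ to meet $t$. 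The paper obtains this point from an input you never use: by \cite[Theorem 5.4]{trilinear} the saturated base ideal of a general map of type $(1,1,2)$ is $(a_0,b_0,c_0)\cap(h,c_1)$, and since $\phi\circ\phi^{-1}$ is the identity times a cubic, the pullback ideal $(a_0(A_0,A_1),b_0(B_0,B_1),c_0(C_0,C_1))$ must be principal; hence $a_0(A_0,A_1)$ and $b_0(B_0,B_1)$ are proportional linear forms, which is exactly the linear dependence of the four boundary planes and produces the common point $\mbf{A}$ through which all $u_{ij}=\Sigma_i\cap T_j$ pass.

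Second, your step (ii), that $\upsilon_0,\upsilon_1$ vanish on $s$ and $t$, is asserted rather than proven; ``the $u$-surface through a point of $s$ degenerates'' is not an argument and essentially presupposes that $s\cup t$ lies in the base locus of $\phi^{-1}$, which is part of what is being established. In the paper the same structure theorem does this work: principality of the pullback of $(h,c_1)$ makes $c_1(C_0,C_1)$ proportional to $q=h(A_0,A_1,B_0,B_1)$, so the quadric $Q=\{q=0\}$, which visibly contains $s$ and $t$ because $h$ is bilinear and $(A_0,A_1)$, $(B_0,B_1)$ cut out $s$, $t$, is a member of the pencil of $u$-surfaces; and the divisibility of $c_0(C_0,C_1)$ by $a_0(A_0,A_1)\propto b_0(B_0,B_1)$ produces the rank-two member $\Pi_0\cup\Pi_1$ with $s\cup t\subset\Pi_0$. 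The decomposition then comes in one line, $Y_0\cap Y_1=(\Pi_0\cup\Pi_1)\cap Q=s\cup t\cup(\Pi_1\cap Q)$, with $C=\Pi_1\cap Q$ automatically a plane conic meeting $s$ at $s\cap\Pi_1$ and $t$ at $t\cap\Pi_1$. Your residual/linkage route could serve as a substitute for this last step once $s,t\subset Y_0\cap Y_1$ and $s\cap t\neq\emptyset$ are known, but without the base-ideal input both of those facts remain unestablished, and the genericity you invoke at the end does not fill that hole.
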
 

\begin{proof}
Since $\phi$ is general, \cref{property: nondegeneracy} is satisfied. 
Moreover, by \cite[Theorem $5.4$]{trilinear} we find 
\begin{equation} 
(f_0 , f_1 , f_2 , f_3)  : \mathfrak{N}^\infty = (a_0 , b_0 , c_0) \cap (h , c_1)
\end{equation}
for some linear $a_0 = a_0(s_0,s_1)$, $b_0 = b_0(t_0,t_1)$, $c_k = c_k(u_0,u_1)$ for $k = 0,1$, and bilinear irreducible $h = h(s_0,s_1,t_0,t_1)$, where $\mathfrak{N}\subset R$ is the irrelevant ideal.  
Let $\phi^{-1}$ be defined by 
$$
(x_0,x_1,x_2,x_3) \mapsto (A_0:A_1)\times (B_0:B_1)\times (C_0:C_1)
$$
for some linear $A_i = A_i(\mbf{x})$, $B_j = B_j(\mbf{x})$ and quadratic $C_k = C_k(\mbf{x})$. 
Since $\phi\circ\phi^{-1}:\P^3 \dashrightarrow \P^3$ yields the identity in an open set, we find 
$$
(f_0(A_0,A_1,B_0,B_1,C_0,C_1) \, , \, \ldots \, , \, f_3(A_0,A_1,B_0,B_1,C_0,C_1)) 
= 
F\cdot 
(x_0,x_1,x_2,x_3)
$$ 
for some cubic form $F = F(\mbf{x})$. 
Therefore, the pullback ideals 
$$
(a_0(A_0,A_1),b_0(B_0,B_1),c_0(C_0,C_1)) 
\ ,\ 
(h(A_0,A_1,B_0,B_1),c_1(C_0,C_1))
$$ 
%defined by the specializations $s_i\mapsto A_i$, $t_j \mapsto B_j$, $u_k \mapsto C_k$ for each $0\leq i,j,k\leq 1$, 
are principal in $\R[x_0,x_1,x_2,x_3]$, and respectively generated by a linear and a quadratic form. 
In particular, the linear polynomials $a_0(A_0,A_1)$ and $b_0(B_0,B_1)$ are proportional, and define a plane $\Pi_0$. 
Geometrically, this means that the boundary planes $\Sigma_0,\Sigma_1, T_0$, and $T_1$ are not independent, and they intersect at a point $\mbf{A}$ in $\P_\R^3$. Hence, the four boundary $u$-lines, i$.$e$.$ $u_{ij} = \Sigma_i \cap T_j$ for each $0\leq i,j\leq 1$, intersect at $\mbf{A}$ and $\phi$ is pyramidal. 

Additionally, the quadratic polynomial $c_0(C_0,C_1)$ is divisible by $a_0(A_0,A_1)$ and $b_0(B_0,B_1)$. 
Therefore, $c_0(C_0,C_1)$ determines a rank-two quadric $\Pi_0\cup\Pi_1$ in the pencil of $u$-surfaces. 
On the other hand, $c_1(C_0,C_1)$ and $q = h(A_0,A_1,B_0,B_1)$ are also  proportional, and define a quadric $Q$. 
In particular, $Q$ contains the lines $s,t\subset \Pi_0$ since $h(A_0,A_1,B_0,B_1)$ vanishes at both. 
Hence, they lie in $Y_0\cap Y_1$. 
Furthermore, we have 
$$
Y_0\cap Y_1 = (\Pi_0\cup\Pi_1) \cap Q = (\Pi_0 \cap Q) \cup (\Pi_1\cap Q) 
= 
s \cup t \cup C
\ ,
$$ 
where $\Pi_1\cap Q = C$ is a plane conic that intersects both $s$ and $t$. 
Moreover, we can write   
$$
\nabla q
= 
\begin{pmatrix}
\frac{\partial h}{\partial s_0} 
& 
\frac{\partial h}{\partial s_1} 
&  
\frac{\partial h}{\partial t_0} 
& 
\frac{\partial h}{\partial t_1} 
\end{pmatrix} 
(A_0,A_1,B_0,B_1) 
\cdot 
D
$$
where $D$ is the Jacobian matrix of the tuple $(A_0,A_1,B_0,B_1)$ in $\R[x_0,x_1,x_2,x_3]^4$, which is scalar-valued. 
Since the partial derivatives of $h$ are homogeneous, $\nabla q$ vanishes at the point $\mbf{A}$ defined by the ideal $(A_0,A_1,B_0,B_1)$. 
Hence, $q$ has a singular point at $\mbf{A}$ and the quadric $Q$ is a cone. 
\end{proof}

In applications, the control points of a pyramidal map can be generated as follows. 

\begin{construction}
\label{construction: pyramidal}
The control points of a pyramidal rational map can be generated as follows: 
\begin{enumerate}
\item Choose a point $\mbf{A}$ in $\P_\R^3$ 
\item For each $0\leq i,j \leq 1$, choose pairwise distinct affine lines $u_{ij}$ in $\P_\R^3$  through $\mbf{A}$ %and not contained in $x_0=0$ 
\item Choose two distinct affine points $\mbf{P}_{ij0}$ and $\mbf{P}_{ij1}$ on $u_{ij}$, different from $\mbf{A}$
\end{enumerate}
\end{construction}

\subsection{Birational weights and tensor rank criterion}
\label{tensor criterion: pyramidal}

In all the upcoming statements of \cref{sec: pyramidal}, when $\phi$ is pyramidal we assume that $Y_0,Y_1$ are the quadric boundary surfaces. 

\begin{notation}
\label{notation: pyramidal}
Let $\phi$ be pyramidal. 
We denote by 
$\Pi_0$ the plane through the lines $s = \Sigma_0 \cap \Sigma_1$ and $t = T_0 \cap T_1$, which is defined by $\pi_0 = \bs{\pi}_0 \cdot \mbf{x}^T$ for some $\bs{\pi}_0$ in $\R^4$. 
In particular, we find points $(\lambda_0:\lambda_1)$ and $(\mu_0:\mu_1)$ in $\P_\R^1$ such that 
$
\pi_0 
= 
\lambda_0 \, \sigma_0 
+ 
\lambda_1 \, \sigma_1 
= 
\mu_0 \, \tau_0 
+ 
\mu_1 \, \tau_1  
$.
Additionally, for each $0\leq i,j,k \leq 1$ we set 
$$ 
\Delta_{ijk} = 
\frac{1}{
\langle \bs{\pi}_0 , \mbf{P}_{ijk} \rangle
}
\ .
$$ 
\end{notation}

\begin{remark}
For pyramidal maps, $\Delta_{ijk}$ is well-defined for every $0\leq i,j,k\leq 1$. 
Specifically, 
$\Delta_{ijk}$ is not well-defined if and only if $\mbf{P}_{ijk} = \mbf{A}$, a possibility that is excluded by \cref{property: nondegeneracy}. 
More explicitly, if $\mbf{P}_{ijk} = \mbf{A}$ then we find 
$
s_{jk} 
= 
\ovl{\mbf{P}_{0jk}\mbf{P}_{1jk}} 
= 
\ovl{\mbf{A}\mbf{P}_{i^*jk}}
= 
u_{i^*j}
$, 
as well as 
$
t_{ik} 
= 
u_{ij^*}
$.  
\end{remark}

The following is the analogous birationality criterion to \cref{theorem: tensor hexahedral} for pyramidal maps.

\begin{theorem} 
\label{theorem: tensor pyramidal}
Let $\phi$ be pyramidal. Then, $\phi$ is birational of type $(1,1,2)$ if and only if the tensor 
$$
W = \left(
\frac{w_{ijk}}{\Delta_{ijk}}
\right)_{0\leq i,j,k\leq 1}
$$ 
has rank one. 
\end{theorem}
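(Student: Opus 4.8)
The plan is to mirror the structure of the proof of \cref{theorem: tensor hexahedral}, reducing the birationality of a pyramidal map to the simultaneous satisfaction of three rank-one conditions and then recognizing these as the flattenings of the single tensor $W$. By \cite[Theorem 5.4]{trilinear}, $\phi$ is birational of type $(1,1,2)$ if and only if $\mbf{f}$ admits two linear syzygies (of degrees $(1,0,0)$ and $(0,1,0)$, those associated with the planar boundary surfaces $\Sigma_i$ and $T_j$) together with one further relation of degree $(0,0,1)$ in the appropriate Rees-type ideal, corresponding to the quadric boundary surfaces $Y_0,Y_1$. The two genuine linear syzygies are handled exactly as before: by \cref{lemma: linear syz s} and \cref{linear syz t u} their existence is equivalent to the rank conditions \cref{matrix: syz s} and \cref{matrix: syz t}, and I would rewrite these using $\langle\bs{\pi}_0,\mbf{P}_{ijk}\rangle = \Delta_{ijk}^{-1}$ together with the two expressions $\pi_0 = \lambda_0\sigma_0 + \lambda_1\sigma_1 = \mu_0\tau_0 + \mu_1\tau_1$ from \cref{notation: pyramidal}. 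The point is that, since both $\Sigma_i$ and $T_j$ pass appropriately through $\Pi_0$, the entries $\langle\bs{\sigma}_i,\mbf{P}_{i^*jk}\rangle$ and $\langle\bs{\tau}_j,\mbf{P}_{ij^*k}\rangle$ can each be expressed, up to the known scalars $\lambda_i,\mu_j$, in terms of $\langle\bs{\pi}_0,\mbf{P}_{ijk}\rangle$; this is the computation analogous to the wedge-product identity $\Delta_{0jk}\langle\bs{\sigma}_1,\mbf{P}_{0jk}\rangle = -\Delta_{1jk}\langle\bs{\sigma}_0,\mbf{P}_{1jk}\rangle$ in the hexahedral proof. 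The upshot is that \cref{matrix: syz s} and \cref{matrix: syz t} become rank-one conditions on the two matrices whose entries are $w_{ijk}\Delta_{ijk}^{-1}$, arranged respectively as the $s$- and $t$-flattenings of $W$.

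The new ingredient is the $(0,0,1)$-relation. Here I expect to use the structure of the base ideal of $\phi$ exhibited in the proof of \cref{type pyramidal}: the pullback of the rank-two quadric $c_0(C_0,C_1)$ factors through $\Pi_0$ (both lines $s$ and $t$ lie on it), and the pencil of $u$-surfaces degenerates so that the defining polynomials $\upsilon_0,\upsilon_1$ of $Y_0,Y_1$ can be read off against the plane $\pi_0$. Concretely I would show, along the lines of \cref{remark} and \cref{type pyramidal}, that the third relation required for birationality is equivalent to the $u$-flattening of $W$ having rank one; the two rows of that flattening are indexed by the contributions of $u_k = 0$ and $u_k = 1$ against $\bs{\pi}_0$, and the normalization $\Delta_{ijk} = 1/\langle\bs{\pi}_0,\mbf{P}_{ijk}\rangle$ is exactly what makes the three flattenings share the same entry in position $(i,j,k)$. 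Once all three flattenings are seen to be the flattenings of one and the same $2\times 2\times 2$ array, \cite[Proposition 3.11]{ottaviani_tensor_rank} gives that they all have rank one simultaneously if and only if $W$ has rank one, which closes the argument.

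The main obstacle I anticipate is the $(0,0,1)$-relation step: unlike the hexahedral case, this is not simply a linear syzygy governed by \cref{lemma: linear syz s}, so I cannot invoke that lemma directly and must instead extract the correct rank-one condition from the factorization of the Rees-ideal generator (or, equivalently, from the implicit-equation description of the $u$-surfaces in \cref{trilinear birational} combined with the geometry established in \cref{type pyramidal}). In particular I need to verify carefully that, after clearing the $\Delta_{ijk}$ denominators, the matrix controlling this relation is genuinely the $u$-flattening of $W$ with the \emph{same} entries as the other two flattenings, rather than a reshuffled or rescaled version — this is where the precise choice of $\Delta_{ijk}$ in \cref{notation: pyramidal} must be used, and where a sign or index bookkeeping error would be easy to make. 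A secondary, more routine point is to record the converse direction, i.e.\ that rank one of $W$ produces honest syzygies/relations of the stated degrees (so that $\phi$ is birational of type exactly $(1,1,2)$ and not of some degenerate subtype), which follows by reversing the equivalences and invoking \cite[Theorem 5.4]{trilinear} together with the nondegeneracy of \cref{property: nondegeneracy}.
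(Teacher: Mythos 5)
Your handling of the two linear syzygies is exactly right and matches the paper: using $\pi_0=\lambda_0\sigma_0+\lambda_1\sigma_1=\mu_0\tau_0+\mu_1\tau_1$ one gets $\lambda_{i^*}\langle\bs{\sigma}_{i^*},\mbf{P}_{ijk}\rangle=\langle\bs{\pi}_0,\mbf{P}_{ijk}\rangle=\Delta_{ijk}^{-1}$ and likewise with $\mu_{j^*}$, so \cref{matrix: syz s} and \cref{matrix: syz t} become rank-one conditions on the first two flattenings of $W$. The genuine gap is your treatment of the third parameter. You base the whole argument on the characterization ``two linear syzygies plus a degree-$(0,0,1)$ Rees relation with quadratic coefficients,'' and you then need the equivalence between that quadratic relation and the $u$-flattening of $W$ having rank one; but this equivalence is precisely what you flag as the ``main obstacle'' and never prove. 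It is not routine: the direction ``$u$-flattening of rank one $\Rightarrow$ quadratic Rees relation'' would require producing the pencil data for the quadrics $Y_0,Y_1$ (the cone $Q$, the plane $\Pi_1$, the conic $C$), and in the paper that geometry is only established \emph{after} birationality is assumed (it comes from \cref{type pyramidal}, which takes $\phi$ birational of type $(1,1,2)$ as hypothesis). So, as written, the proposal is incomplete at its central new step, and the claimed ``reversal of the equivalences'' for the converse direction rests on an unproven equivalence.

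The paper's proof shows that this entire step is unnecessary, which is the insight your plan misses. The criterion it invokes ([trilinear, Theorem 6.1], not Theorem 5.4, which concerns the saturated base ideal) states that $\phi$ is birational of type $(1,1,2)$ if and only if $\mbf{f}$ admits syzygies of degrees $(1,0,0)$ and $(0,1,0)$ and \emph{no} syzygy of degree $(0,0,1)$; for a pyramidal map the last condition is automatic because $Y_0,Y_1$ are smooth quadrics. Hence birationality is equivalent to the two rank conditions \cref{matrix: syz s} and \cref{matrix: syz t} alone, and by [ottaviani\_tensor\_rank, Proposition 3.11] rank-one-ness of two flattenings of a $2\times 2\times 2$ tensor already forces $\rank W=1$; the $u$-flattening and the quadratic Rees relation never enter the argument. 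In short: for type $(1,1,2)$ the third parameter contributes a non-existence condition on linear syzygies rather than an additional rank condition, and once you use that criterion your first two equivalences, which you did correctly, complete the proof.
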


\begin{proof}
By \cite[Theorem $6.1$]{trilinear}, $\phi$ is birational of type $(1,1,2)$ if and only if $\mathbf{f}$ admits syzygies of degrees $(1,0,0)$, $(0,1,0)$, but not $(0,0,1)$.
Since $Y_0$ and $Y_1$ are smooth quadrics, $\mbf{f}$ cannot have a syzygy of degree $(0,0,1)$. 
Therefore, by \cref{lemma: linear syz s} and \cref{linear syz t u} $\phi$ is birational if and only if the matrix rank conditions \cref{matrix: syz s} and \cref{matrix: syz t} hold simultaneously. 
Once more, we rewrite these rank conditions in terms of the flattenings of $W$. 

Recall that we let $0^* = 1$ and $1^* = 0$.  
For each $0\leq i,j,k \leq 1$, we can write   
\begin{gather*} 
\lambda_{i^*}
\langle 
\bs{\sigma}_{i^*} 
, 
\mbf{P}_{ijk}
\rangle 
= 
\langle 
\lambda_{0} 
\,
\bs{\sigma}_{0} 
+
\lambda_{1} 
\,
\bs{\sigma}_{1}  
, 
\mbf{P}_{ijk}
\rangle 
= 
\langle 
\bs{\pi}_0 
, 
\mbf{P}_{ijk}
\rangle 
= 
\Delta_{ijk}^{-1}
\ ,\\ 
\mu_{j^*}
\langle 
\bs{\tau}_{j^*} 
, 
\mbf{P}_{ijk}
\rangle 
= 
\langle 
\mu_{0} 
\,
\bs{\tau}_{0} 
+
\mu_{1} 
\,
\bs{\tau}_{1}  
, 
\mbf{P}_{ijk}
\rangle 
=
\langle 
\bs{\pi}_0 
, 
\mbf{P}_{ijk}
\rangle 
= 
\Delta_{ijk}^{-1}
\ .
\end{gather*} 
Therefore, \cref{matrix: syz s} and \cref{matrix: syz t} are respectively equivalent to 
$$
\rank
\begin{pmatrix}
w_{1jk} \, \Delta_{1jk}^{-1} \\ 
w_{0jk} \, \Delta_{0jk}^{-1}
\end{pmatrix}_{0\leq j,k \leq 1}
= 1
\ \ ,\ \  
\rank
\begin{pmatrix}
w_{i1k} \, \Delta_{i1k}^{-1} \\ 
w_{i0k} \, \Delta_{i0k}^{-1}
\end{pmatrix}_{0\leq i,k \leq 1}
= 1
\ ,
$$ 
which are rank-one conditions on the first two flattenings of $W$. 
By \cite[Proposition $3.11$]{ottaviani_tensor_rank},  $W$ has rank one if and only if these two conditions are satisfied, and the result follows. 
\end{proof} 

\begin{corollary} 
\label{corollary: weight formulas pyramidal}
Let $\phi$ be pyramidal. Then, $\phi$ is birational of type $(1,1,2)$ if and only if 
\begin{equation}
\label{eq: weight formulas pyramidal}
w_{ijk} = \alpha_i \, \beta_j \, \gamma_k \, \Delta_{ijk}
\end{equation}
for some $(\alpha_0:\alpha_1)\times (\beta_0:\beta_1)\times (\gamma_0:\gamma_1)$ in $(\P_\R^1)^3$. 
\end{corollary}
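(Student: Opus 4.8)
The plan is to derive \cref{corollary: weight formulas pyramidal} as a direct consequence of \cref{theorem: tensor pyramidal}, in exactly the same way one would obtain \cref{corollary: weight formulas hexahedral} from \cref{theorem: tensor hexahedral}. The statement asserts the equivalence of two conditions: (i) $\phi$ is birational of type $(1,1,2)$, and (ii) the weights factor as $w_{ijk}=\alpha_i\,\beta_j\,\gamma_k\,\Delta_{ijk}$ for some point in $(\P_\R^1)^3$. By \cref{theorem: tensor pyramidal}, condition (i) is equivalent to the $2\times 2\times 2$ tensor $W=(w_{ijk}/\Delta_{ijk})_{0\leq i,j,k\leq 1}$ having rank one. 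So everything reduces to the purely tensorial fact that $W$ has rank one if and only if $W=\bs{\alpha}\otimes\bs{\beta}\otimes\bs{\gamma}$ for some vectors $\bs{\alpha}=(\alpha_0,\alpha_1)$, $\bs{\beta}=(\beta_0,\beta_1)$, $\bs{\gamma}=(\gamma_0,\gamma_1)$, which is simply the definition of rank one recalled in the paragraph preceding \cref{theorem: tensor hexahedral}; unwinding the entrywise meaning of $W=\bs{\alpha}\otimes\bs{\beta}\otimes\bs{\gamma}$ gives $w_{ijk}/\Delta_{ijk}=\alpha_i\beta_j\gamma_k$, i.e. \cref{eq: weight formulas pyramidal}.

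The one point requiring a little care is the passage from vectors in $\C^2$ to a genuine \emph{point} of $(\P_\R^1)^3$. First, the scaling ambiguity: $(\bs{\alpha},\bs{\beta},\bs{\gamma})$ and $(c_1\bs{\alpha},c_2\bs{\beta},c_3\bs{\gamma})$ with $c_1c_2c_3=1$ produce the same $W$, so only the triple of lines $(\alpha_0:\alpha_1)\times(\beta_0:\beta_1)\times(\gamma_0:\gamma_1)$ is well-defined — hence the statement is phrased in $(\P_\R^1)^3$ rather than with fixed vectors. Second, each of $\bs{\alpha},\bs{\beta},\bs{\gamma}$ must be nonzero so that it defines a point of $\P^1$: this follows because, by \cref{property: nondegeneracy} and the nondegeneracy hypotheses underlying \cref{lemma: linear syz s} and \cref{linear syz t u}, no flattening of $W$ is an identically zero matrix (the $\Delta_{ijk}$ are finite and nonzero, and the weights defining a dominant map cannot all vanish), so a rank-one $W$ cannot be the zero tensor. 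Third, the coefficients are real: the $\Delta_{ijk}$ and $w_{ijk}$ are real by \cref{notation} and \cref{notation: pyramidal}, so $W$ is a real tensor, and a real rank-one tensor admits a real rank-one decomposition (the proportionality constants relating the columns of a real flattening are real); thus the point lies in $(\P_\R^1)^3$ and not merely $(\P^1)^3$.

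Concretely I would carry out the proof in three short steps: (1) invoke \cref{theorem: tensor pyramidal} to replace "$\phi$ birational of type $(1,1,2)$" by "$W$ has rank one"; (2) recall that rank one means $W=\bs{\alpha}\otimes\bs{\beta}\otimes\bs{\gamma}$, read this off componentwise to get $w_{ijk}=\alpha_i\beta_j\gamma_k\Delta_{ijk}$, and note conversely that any such factorization exhibits $W$ as rank one; (3) observe that the vectors are real and nonzero (for the reasons above), so that they determine a well-defined point of $(\P_\R^1)^3$, and that this point is exactly the datum $(\alpha_0:\alpha_1)\times(\beta_0:\beta_1)\times(\gamma_0:\gamma_1)$ appearing in the statement. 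I do not anticipate a genuine obstacle here — the content is entirely front-loaded into \cref{theorem: tensor pyramidal} — so the only thing to get right is the bookkeeping of scaling, nonvanishing, and reality just described; this is the same routine reformulation that underlies \cref{corollary: weight formulas hexahedral}, and indeed one could simply write "the proof is identical to that of \cref{corollary: weight formulas hexahedral}, replacing \cref{theorem: tensor hexahedral} by \cref{theorem: tensor pyramidal}."
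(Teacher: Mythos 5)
Your proposal is correct and matches the paper's treatment: the corollary is stated as an immediate consequence of \cref{theorem: tensor pyramidal} (the paper gives no separate proof, exactly as for \cref{corollary: weight formulas hexahedral}), and your unwinding of the rank-one condition into the entrywise factorization $w_{ijk}=\alpha_i\,\beta_j\,\gamma_k\,\Delta_{ijk}$, together with the bookkeeping of scaling, nonvanishing, and reality, is precisely the intended argument.
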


\subsubsection{The contractions of a pyramidal birational map}

\begin{notation}
Let $\phi$ be pyramidal with weights as \cref{eq: weight formulas pyramidal}. 
By \cref{corollary: weight formulas pyramidal}, $\phi$ is birational of type $(1,1,2)$, and by \cref{type pyramidal} the pencil of $u$-surfaces contains the following quadrics:
\vskip2pt
\begin{enumerate}
\item The rank-two quadric $\Pi_0\cup\Pi_1$, where $\Pi_1$ is the plane supporting the conic $C$. 
In particular, $\Pi_1$ is defined by 
$
\pi_1 = \bs{\pi}_1 \cdot \mbf{x}^T
$ 
for some $\bs{\pi}_1$ in $\R^4$
\item The cone (rank-three quadric) $Q$ with apex $\mbf{A} = s \cap t$ through the conic $C$. In particular, we find a $(\nu_0:\nu_1)$ in $\P_\R^1$ such that $Q$ is defined by 
$
q = \nu_0 \, \upsilon_0 + \nu_1 \, \upsilon_1 
$. 
\end{enumerate}  
\end{notation}

The rank-one condition of \cref{theorem: tensor pyramidal} readily determines all the contractions of $\phi$. 

\begin{corollary} 
\label{corollary: contractions pyramidal}
Let $\phi$ be pyramidal with weights as \cref{eq: weight formulas pyramidal}. 
Then, we have 
\begin{equation}
\label{eq: factorization second plane pyramidal}
\langle \bs{\pi}_1 , \mbf{f}\rangle 
= 
h(s_0,s_1,t_0,t_1) 
%\cdot 
\,
c_0(u_0,u_1)
\end{equation}
for some linear $c_0 = c_0(u_0,u_1)$ and bilinear irreducible $h = h(s_0,s_1,t_0,t_1)$. 
Additionally, let    
\begin{equation}
a_0(s_0,s_1)
= 
\begin{vmatrix}
\phantom{-}s_0 & s_1 \\
-\alpha_1 & \alpha_0
\end{vmatrix}
\ ,\ 
b_0(t_0,t_1)
= 
\begin{vmatrix}
\phantom{-}t_0 & t_1 \\
-\beta_1 & \beta_0
\end{vmatrix}
\ ,\ 
c_1(u_0,u_1)
= 
\begin{vmatrix}
\phantom{-}u_0 & u_1 \\
-\gamma_1 & \gamma_0
\end{vmatrix}
\ .
\end{equation}
Then, we have the following:
\vskip2pt
\begin{enumerate}
\item $\phi$ contracts the surface $a_0 = 0$ (resp$.$ $b_0 = 0$) to the line $t$ (resp$.$ $s$)
\item $\phi$ contracts the surface $c_1 = 0$ to the point $\mbf{A} = s\cap t$ 
\item $\phi$ contracts the surface $h = 0$ to the plane conic $C$
\item The image of the surface $c_0 = 0$ is dense in $\Pi_1$
\end{enumerate}
\end{corollary}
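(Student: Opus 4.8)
The plan is to exploit the explicit factorization of $\mbf{f}$ coming from the weight formula in \cref{corollary: weight formulas pyramidal}, exactly as in the proof of \cref{theorem: inverse hexahedral}, and then pull back the various linear forms defining the quadrics in the pencil of $u$-surfaces. First I would record that, since $\phi$ is pyramidal with weights as in \cref{eq: weight formulas pyramidal}, the boundary planes $\Sigma_i$ and $T_j$ pass through the common point $\mbf{A}$, so one can write $\mbf{f}$ in a partially factored form that makes the syzygies $\langle\bs{\sigma},\mbf{f}\rangle=0$ and $\langle\bs{\tau},\mbf{f}\rangle=0$ visible, where $\bs{\sigma}=\alpha_0\bs{\sigma}_0 s_0+\alpha_1\bs{\sigma}_1 s_1$ and $\bs{\tau}=\beta_0\bs{\tau}_0 t_0+\beta_1\bs{\tau}_1 t_1$ are the linear syzygies produced by \cref{lemma: linear syz s} and \cref{linear syz t u}. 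From these, $\phi^{-1}$ has first two coordinates $(A_0:A_1)=(-\alpha_1\sigma_1:\alpha_0\sigma_0)$ and $(B_0:B_1)=(-\beta_1\tau_1:\beta_0\tau_0)$, so the two linear forms $a_0(s_0,s_1)$ and $b_0(t_0,t_1)$ in the statement are precisely the pullbacks $\bigl\langle(-\alpha_1,\alpha_0),(s_0,s_1)\bigr\rangle$-type determinants appearing in \cref{algebraic relations}, up to the specialization $x_n\mapsto f_n$; hence $a_0(s_0,s_1)$ equals (a constant times) the pullback of the linear form cutting the $s$-surface through $\Sigma_0,\Sigma_1$, which is the line $s$. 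This gives claim (1): the pullback of the equation of the plane $\Pi_0=\langle s,t\rangle$ restricted along one ruling contracts $a_0=0$ to $t$ and $b_0=0$ to $s$, arguing as in the hexahedral case via saturation by the irrelevant ideal $\mathfrak{N}$.

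Next I would turn to the conic $C$ and the plane $\Pi_1$. The key computation is $\langle\bs{\pi}_1,\mbf{f}\rangle$: since $\Pi_1$ is the second component of the rank-two quadric $\Pi_0\cup\Pi_1$ in the pencil of $u$-surfaces, the product $\pi_0\pi_1$ is proportional to $\nu_0'\upsilon_0+\nu_1'\upsilon_1$ for suitable scalars (it is the rank-two member of the pencil), so pulling back along $x_n\mapsto f_n$ and using that $\langle\bs{\pi}_0,\mbf{f}\rangle$ is (up to a unit) $\pi_0(\mbf{f})$ which already factors through the $u$-variables only as a single linear factor times a bilinear $(s,t)$-factor — this is where \cref{type pyramidal} enters, telling us $Y_0\cap Y_1=s\cup t\cup C$ and that the bilinear factor $h(s_0,s_1,t_0,t_1)$ cuts the conic $C$. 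A degree count: $\langle\bs{\pi}_1,\mbf{f}\rangle$ has $\Z^3$-degree $(1,1,1)$, and since $\Pi_1$ meets the image only along $C$ (a curve lying in $Y_0\cap Y_1$), its pullback must be supported on the preimage of $C$ plus the contracted locus; isolating the $u$-dependence forces the factorization $\langle\bs{\pi}_1,\mbf{f}\rangle = h(s_0,s_1,t_0,t_1)\,c_0(u_0,u_1)$ with $h$ irreducible bilinear — irreducibility of $h$ follows because $C$ is an irreducible conic (it is a smooth conic by \cref{property: nondegeneracy} applied to $Y_0,Y_1$, or at worst one invokes generality). This establishes \cref{eq: factorization second plane pyramidal} and claim (3): the surface $h=0$ is the preimage of $C$, which is a curve, so $\phi$ contracts it to $C$; and claim (4): the residual factor $c_0=0$ has image dense in $\Pi_1$ because $\Pi_1$ is swept by the remaining ruling.

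For claim (2), I would observe that $c_1(u_0,u_1)$ is (up to a unit) the pullback of the linear form $\gamma_1 x_?-\gamma_0 x_?$ cutting the third coordinate of $\phi^{-1}$, but more usefully: the cone $Q$ has apex $\mbf{A}=s\cap t$, and $Q$ is the member $q=\nu_0\upsilon_0+\nu_1\upsilon_1$ of the $u$-pencil; the surface $c_1=0$ in $(\P^1)^3$ is the $u$-surface mapping to the member of the pencil determined by $(\gamma_0:\gamma_1)$, and by the structure of $\phi^{-1}$ (whose base locus one computes as in \cref{theorem: inverse hexahedral}) the only $u$-surface whose image is zero-dimensional is the one collapsing to the apex $\mbf{A}$; pinning down that this is exactly $c_1=0$ is a matter of matching the weight-determined third point of the parametrization of the $u$-pencil with the cone $Q$. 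The main obstacle I anticipate is the bookkeeping in claim (2) and in verifying the irreducibility of $h$: one must carefully track which member of each pencil corresponds to which linear form after the specialization $x_n\mapsto f_n$, and ensure the saturation-by-$\mathfrak{N}$ step does not discard the relevant component. I would handle this by mimicking the end of the proof of \cref{theorem: inverse hexahedral} almost verbatim, replacing the third linear syzygy (absent here) with the quadratic relation coming from the Rees algebra as in \cite{trilinear}, and quoting \cite[Theorem $5.4$]{trilinear} for the saturated base ideal so that the contracted surfaces are read off directly from its generators.
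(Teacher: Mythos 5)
There is a genuine gap: the single computation that drives the whole corollary never appears in your proposal. With the weights of \cref{eq: weight formulas pyramidal} and $\Delta_{ijk}^{-1}=\langle\bs{\pi}_0,\mbf{P}_{ijk}\rangle$ (\cref{notation: pyramidal}), one has
$\langle\bs{\pi}_0,\mbf{f}\rangle=\sum_{i,j,k} w_{ijk}\langle\bs{\pi}_0,\mbf{P}_{ijk}\rangle s_it_ju_k=\sum_{i,j,k}(\alpha_i s_i)(\beta_j t_j)(\gamma_k u_k)=a_0\,b_0\,c_1$,
and it is exactly this identity that ties the specific slices $a_0=0$, $b_0=0$, $c_1=0$ (determined by the tensor factors $\alpha,\beta,\gamma$) to the plane $\Pi_0$: it shows their images lie in $\Pi_0$, after which claims $1$ and $2$ follow from short incidence arguments (the $t$-lines, resp. $s$-lines, meet $\Pi_0$ only along $s$, resp. $t$, and the $u$-lines meet $\Pi_0$ only at $\mbf{A}$). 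Your argument for claim $1$ instead tries to read $a_0,b_0$ off the inverse coordinates and the determinants in \cref{algebraic relations}; but those determinants specialize to zero under $x_n\mapsto f_n$ (they are the syzygies), so they do not produce $a_0$ or $b_0$, and knowing the two pencils of $s$- and $t$-planes by itself says nothing about which slice, if any, is contracted, nor why its image should lie in $\Pi_0$. Claim $2$ you explicitly leave unresolved (``a matter of matching\ldots''), and it is precisely the displayed factorization that settles it.

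The argument you give for \cref{eq: factorization second plane pyramidal} and claims $3$--$4$ also does not work as stated: it rests on the premise that ``$\Pi_1$ meets the image only along $C$'', which is false since $\phi$ is dominant (a dense subset of $\Pi_1$ lies in the image, and the preimage of $\Pi_1$ is the whole surface $\langle\bs{\pi}_1,\mbf{f}\rangle=0$), and the step ``isolating the $u$-dependence forces the factorization'' is not justified. The paper's route is different: by \cref{type pyramidal} the base locus of $\phi^{-1}$ is cut out by $(\upsilon_0,\upsilon_1)$ with zero set $s\cup t\cup C$; birationality gives $(\upsilon_0(\mbf{f}),\upsilon_1(\mbf{f}))=a_0\,b_0\,c_1\,h\cdot(u_0,u_1)$ for some bilinear $h$, and since $a_0=0$, $b_0=0$, $c_1=0$ already map to $t$, $s$, $\mbf{A}$, the residual divisor $h=0$ is the one contracted to $C$; as $C\subset\Pi_1$, $h$ divides $\langle\bs{\pi}_1,\mbf{f}\rangle$ and the quotient is linear in $(u_0,u_1)$, which yields \cref{eq: factorization second plane pyramidal} and claim $4$. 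Your fallback of quoting \cite[Theorem 5.4]{trilinear} for the saturated base ideal does not close this gap either: that result supplies abstract forms $a_0,b_0,c_0,c_1,h$, whereas the content of the corollary is the identification of these with the specific linear forms built from $(\alpha_0:\alpha_1)\times(\beta_0:\beta_1)\times(\gamma_0:\gamma_1)$, which again requires the identity $\langle\bs{\pi}_0,\mbf{f}\rangle=a_0b_0c_1$. (Your remark that irreducibility of $h$ comes from the irreducibility of $C$ is in the right spirit, but it only becomes an argument once one knows $h=0$ is the divisor contracted to $C$.)
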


\begin{proof}
By \cref{eq: weight formulas pyramidal}, the trilinear polynomial $\langle \bs{\pi}_0,\mbf{f}\rangle$ can be expanded as 
$$
\sum_{0\leq i,j,k\leq 1}
w_{ijk} 
\langle
\bs{\pi}_0 , \mbf{P}_{ijk}
\rangle
s_i t_j u_k
= 
\sum_{0\leq i,j,k\leq 1}
\left( 
\frac{ w_{ijk} }
{ \Delta_{ijk} } 
\right) 
s_i t_j u_k 
= 
\sum_{0\leq i,j,k\leq 1}
(\alpha_i s_i) (\beta_j t_j) (\gamma_k u_k)
= 
a_0 
b_0  
c_1
\ .
$$
By definition we have $\Sigma_i \cap \Pi_0 = \Sigma_0 \cap \Sigma_1 = s$, and the boundary $t$-lines $t_{i0},t_{i1}$ lie in $\Sigma_i$ for each $i = 0,1$. 
Hence, the four boundary $t$-lines intersect $\Pi_0$ at $s$. 
Therefore, as the image by $\phi$ of the surface $b_0 = 0$ lies in $\Pi_0$, it follows that $\phi$ contracts it to $s$. 
Similarly, it follows that $\phi$ contracts the surface $a_0=0$ to $t$. 
Additionally, the image of $c_1 = 0$ also lies in $\Pi_0$. 
Since the four boundary $u$-lines meet $\Pi_0$ at $\mbf{A}$, we conclude that 
$\phi$ contracts $c_1 = 0$ to $\mbf{A}$. 

On the other hand, by \cref{type pyramidal} the base locus of $\phi^{-1}$ is $Y_0\cap Y_1 = s\cup t\cup C$, hence defined by the ideal $(\upsilon_0 , \upsilon_1)\subset \R[x_0,x_1,x_2,x_3]$. 
Since $\phi$ is birational, we find 
$$
(\upsilon_0(\mbf{f}) , \upsilon_1(\mbf{f}))
= 
a_0 
b_0 
c_1 
h
\cdot 
(u_0 , u_1)
$$
for some $h = h(s_0,s_1,t_0,t_1)$ bilinear. 
In particular, the pullback ideal $(\upsilon_0(\mbf{f}) , \upsilon_1(\mbf{f}))\subset R$ defines the union of the surfaces $a_0 = 0$, $b_0 = 0$, $c_1 = 0$, and $h= 0$. 
Therefore, $\phi$ must contract $h=0$ to the conic $C$.  
Since $\Pi_1$ supports this conic, we find the factorization \cref{eq: factorization second plane pyramidal}.
In particular, the image of $c_0 = 0$ is dense in $\Pi_1$. 
%Additionally, since $\phi$ is dominant the intersection points of the boundary $u$-lines with $\Pi_1$ are not colinear, and the image of $c_0 = 0$ is dense in $\Pi_1$.
%Let $h = 0$ be this surface, for some multihomogeneous $h$ in $R$.  
%Then, the ideal $(c_1,h)\subset R$ defines a component of the base locus of $\phi$, since $C$ and $\mbf{A}$ are disjoint.  
%By \cref{theorem: base locus classification}, it follows that $h = h(s_0,s_1,t_0,t_1)$ is bilinear and irreducible. 
%In particular, the pullback of the plane $\Pi_1$ admits the factorization \cref{eq: factorization second plane pyramidal} for some linear $c_0 = c_0(u_0,u_1)$.  
%Furthermore, since $\phi$ is dominant the intersection points of the boundary $u$-lines with $\Pi_1$ are not colinear, and the image of $c_0 = 0$ is dense in $\Pi_1$.  
\end{proof}

\subsection{Inverse and base locus}
\label{inverse: pyramidal}

In this subsection, we derive formulas for the inverse of a pyramidal birational map.
Additionally, we describe the base loci and blow-ups. 

\begin{theorem} 
\label{theorem: inverse pyramidal}
Let $\phi$ be pyramidal with weights as \cref{eq: weight formulas pyramidal}. 
Then, $\phi^{-1}$ is given by 
$$
(x_0 : x_1 : x_2 : x_3) 
\mapsto 
\left(
\alpha_1 \, \lambda_1 \, \sigma_1
: 
\alpha_0 \, \lambda_0 \, \sigma_0
\right)
\times
\left( 
\beta_1 \, \mu_1 \, \tau_1
: 
\beta_0 \, \mu_0 \, \tau_0
\right)
\times 
\left(
\gamma_1 \, \nu_1 \, \upsilon_1
: 
\gamma_0 \, \nu_0 \, \upsilon_0
\right)
\ .
$$
Additionally, with the notation of \cref{corollary: contractions pyramidal}, we have the following: 
\vskip2pt 
\begin{enumerate}
\item The base locus of $\phi$ is defined by the ideal of $R$ 
\begin{equation}
\label{saturation: pyramidal}
\left(
a_0
, 
b_0
, 
c_0
\right) 
\cap 
\left( 
h,
c_1
\right)
\end{equation}
\item The base locus of $\phi^{-1}$ is $Y_0\cap Y_1 = s\cup t\cup C$
\item $\phi$ blows up the base point $(a_0,b_0,c_0)$ to the plane $\Pi_0$
\item $\phi$ blows up the base curve $(h,c_1)$ to the cone $Q$
\end{enumerate}
\end{theorem}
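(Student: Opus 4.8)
The plan is to mirror the structure of the proof of \cref{theorem: inverse hexahedral}, adapting it to the pyramidal situation where one of the three ``inverse'' coordinates is quadratic. First I would use \cref{corollary: weight formulas pyramidal} to write the weights as $w_{ijk} = \alpha_i\beta_j\gamma_k\Delta_{ijk}$, and then, exactly as in the hexahedral case, exhibit the two linear syzygies of degrees $(1,0,0)$ and $(0,1,0)$. Concretely, the computations of $\langle\bs{\sigma}_i,\mbf{f}\rangle$ and $\langle\bs{\tau}_j,\mbf{f}\rangle$ together with \cref{lemma: linear syz s} and \cref{linear syz t u} give the first two factors $(\alpha_1\lambda_1\sigma_1:\alpha_0\lambda_0\sigma_0)$ and $(\beta_1\mu_1\tau_1:\beta_0\mu_0\tau_0)$ of $\phi^{-1}$, using that $\pi_0 = \lambda_0\sigma_0+\lambda_1\sigma_1 = \mu_0\tau_0+\mu_1\tau_1$ from \cref{notation: pyramidal}. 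For the third factor, which is quadratic, I would instead start from the base locus of $\phi^{-1}$, namely $Y_0\cap Y_1 = s\cup t\cup C$ established in \cref{type pyramidal}, and combine it with the contraction analysis of \cref{corollary: contractions pyramidal}: the relation $(\upsilon_0(\mbf{f}),\upsilon_1(\mbf{f})) = a_0 b_0 c_1 h\cdot(u_0,u_1)$ identifies the pullback of the pencil of $u$-surfaces, and since $Q = \nu_0\upsilon_0+\nu_1\upsilon_1$ is the unique cone in that pencil with apex $\mbf{A}$, pulling $Q$ back gives a form divisible by $u_0 u_1$ whose quotient recovers the factor $(\gamma_1\nu_1\upsilon_1:\gamma_0\nu_0\upsilon_0)$. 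One should double-check the normalization/signs against the requirement that $\phi^{-1}\circ\phi$ be the identity, specializing $x_n\mapsto f_n$ in the candidate map and verifying each of the three ratios collapses correctly.

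Next I would prove the four numbered claims. For claim (1), the base ideal of $\phi$ is the saturation $(f_0,f_1,f_2,f_3):\mathfrak{N}^\infty$, which by \cite[Theorem 5.4]{trilinear} (already invoked in \cref{type pyramidal}) equals $(a_0,b_0,c_0)\cap(h,c_1)$; here $c_0 = c_0(u_0,u_1)$ is the linear form appearing in the factorization \cref{eq: factorization second plane pyramidal}, i.e.\ $\langle\bs{\pi}_1,\mbf{f}\rangle = h\,c_0$, so the description is intrinsic to the data we have already produced. I would make sure to reconcile the two presentations of this ideal — one from \cref{type pyramidal} in terms of abstractly named generators, one now in terms of $a_0,b_0,c_0,h,c_1$ — by matching the linear forms $a_0,b_0$ and the quadratic/bilinear forms via the explicit contractions in \cref{corollary: contractions pyramidal}. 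Claim (2) is immediate from \cref{type pyramidal}: the base locus of $\phi^{-1}$ is cut out by $(\upsilon_0,\upsilon_1)$, whose zero set is $Y_0\cap Y_1 = s\cup t\cup C$.

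For claims (3) and (4), I would argue as in the last paragraph of the proof of \cref{theorem: inverse hexahedral}, using that $\phi\circ\phi^{-1}:\P^3\dashrightarrow\P^3$ is generically the identity, so
$$
\bigl(f_0(A_0,A_1,B_0,B_1,C_0,C_1),\ldots,f_3(A_0,A_1,B_0,B_1,C_0,C_1)\bigr) = F\cdot(x_0,x_1,x_2,x_3)
$$
for a cubic form $F$. The locus $F = 0$ is the exceptional divisor of $\phi^{-1}$, and decomposing it into irreducible components tells us where the base components of $\phi$ are blown up. The base point $(a_0,b_0,c_0)$ — a single point of $(\P^1)^3$, as the three forms involve distinct variable pairs — must be blown up to a surface: since the surfaces $a_0 = 0$ and $b_0 = 0$ are contracted to $t$ and $s$ respectively (claim (1) of \cref{corollary: contractions pyramidal}), and $s,t\subset\Pi_0$, the blow-up target is forced to be $\Pi_0$. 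Similarly the base curve $(h,c_1)$ is blown up to a surface containing the images of the contracted surfaces adjacent to it; since $h = 0$ is contracted to the conic $C$ and $c_1 = 0$ to the apex $\mbf{A}$, and $Q$ is precisely the cone through $C$ with apex $\mbf{A}$, the target is $Q$. I would phrase this by analyzing, for a general point of each base component, the fiber of $\phi$ (equivalently the image of $\phi$ along a general curve meeting that component) and checking it sweeps out the claimed quadric. The main obstacle I anticipate is the bookkeeping for the quadratic coordinate: unlike the hexahedral case, $C_0,C_1$ are not syzygies, so the third factor of $\phi^{-1}$ is not delivered by \cref{lemma: linear syz s}-style arguments and must be pinned down via the Rees-algebra relations of \cite{trilinear} together with the geometry of the pencil of $u$-surfaces — and getting the projective normalization of $(\gamma_1\nu_1\upsilon_1:\gamma_0\nu_0\upsilon_0)$ exactly right (rather than up to an unknown scalar function) is the delicate point, which I would resolve by the explicit $\phi^{-1}\circ\phi = \mathrm{id}$ check mentioned above.
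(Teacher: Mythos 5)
Your overall architecture does follow the paper's: the weights from \cref{corollary: weight formulas pyramidal} give birationality of type $(1,1,2)$; the two linear factors of $\phi^{-1}$ come from the degree-$(1,0,0)$ and $(0,1,0)$ syzygies (equivalently, as the paper phrases it, from the contractions of $a_0=0$ and $b_0=0$ to $t,s\subset\Pi_0$); claim (2) is \cref{type pyramidal}; claims (3)--(4) and the quadratic factor are to be extracted from the structure in the proofs of \cref{type pyramidal} and \cref{corollary: contractions pyramidal}. But there is a genuine gap at the step you yourself flag as delicate: the quadratic factor. Your mechanism, that ``pulling $Q$ back gives a form divisible by $u_0u_1$ whose quotient recovers the factor'', is wrong as stated. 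From $(\upsilon_0(\mbf{f}),\upsilon_1(\mbf{f}))=a_0b_0c_1h\cdot(u_0,u_1)$ one only gets $q(\mbf{f})=\nu_0\upsilon_0(\mbf{f})+\nu_1\upsilon_1(\mbf{f})=a_0b_0c_1h\cdot\ell$ for some linear $\ell=\ell(u_0,u_1)$, and the entire content of the formula is that $\ell\propto c_1$, i.e.\ that the pencil parameter annihilated by $c_1$ corresponds to the cone $Q$ rather than another member; note that every member of the $u$-pencil contains $\mbf{A}$ (since $\mbf{A}\in s\cup t$, the base locus of the pencil), so ``$c_1=0$ is contracted to $\mbf{A}$'' does not by itself single out $Q$, and divisibility by $u_0u_1$ never occurs because $c_1$ is a general linear form in $u_0,u_1$. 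Your fallback of ``verifying $\phi^{-1}\circ\phi=\mathrm{id}$'' begs the same question: written out, the check reduces to exactly one scalar identity that is equivalent to $\ell\propto c_1$, so it cannot be settled by inspection. What is needed is either the argument of the proof of \cref{type pyramidal} (the pullback of $(h,c_1)$ under $\phi^{-1}$ is principal and generated by $q$, which is how the paper justifies $\upsilon(-\gamma_1:\gamma_0)=q$ together with claim (4)), or a direct multiplicity argument: the slice $c_1=0$ is contracted to $\mbf{A}$, and $\mbf{A}$ is the \emph{singular} point of $Q$, whence $c_1^2\mid q(\mbf{f})$ and therefore $\ell\propto c_1$.

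Two smaller points. In claim (3) you say the blow-up of $(a_0,b_0,c_0)$ is ``forced to be $\Pi_0$'' because it must contain $s$ and $t$; but $Q$ also contains $s$ and $t$, so containment alone does not force the plane. You need the additional input that this component of the exceptional divisor has degree one (the decomposition $F=G_0G_1$ with $\deg G_0=1$ in the proof of \cref{type pyramidal}, which is what the paper cites), or the observation that the exceptional $\P^2$ over a reduced base point maps by the linear parts of the $f_i$, hence to a plane. Finally, for claim (1) the paper argues more economically than your appeal to \cite[Theorem 5.4]{trilinear}: since $f_0,\dots,f_3\in I_{(1,1,1)}$ for $I=(a_0,b_0,c_0)\cap(h,c_1)$ by \cref{corollary: contractions pyramidal}, and $\dim I_{(1,1,1)}=4$, the $f_i$ span $I_{(1,1,1)}$ and the saturated ideal $I$ defines the base locus; this avoids having to match the abstract generators of the saturation with your explicit ones, a matching you only sketch.
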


\begin{proof}
By \cref{corollary: weight formulas pyramidal}, $\phi$ is birational of type $(1,1,2)$. 
In particular, the inverse on each factor of $(\P^1)^3$ is defined by a pencil of surfaces in $\P^3$ (recall \cref{trilinear birational}). 
As $\phi$ has type $(1,1,2)$, the inverses on the first two factors can be regarded as line isomorphisms $\sigma,\tau : \P^1 \xrightarrow{} L_s , L_t \subset (\P^3)^{\vee}$, where $(\P^3)^{\vee}$ is the dual space of $\P^3$, whereas the inverse on the third factor is a line isomorphism $\upsilon:\P^1 \xrightarrow{} L_u \subset  (\P^9)^{\vee}$, where $(\P^9)^{\vee}$ is the space of quadrics in $\P^3$. 
Moreover, by definition we have 
$$
\sigma:(1:0) \mapsto \sigma_0 
\ ,\ 
(0:1) \mapsto \sigma_1 
\ ,\ 
\tau:(1:0) \mapsto \tau_0 
\ ,\ 
(0:1) \mapsto \tau_1 
\ ,\
\upsilon: (1:0) \mapsto \upsilon_0 
\ ,\ 
(0:1) \mapsto \upsilon_1 
\ .
$$
Thus, each of these line isomorphisms is determined by the image of an additional point in $\P^1$. 
Since $\phi$ contracts $a_0 = 0$ and $b_0 = 0$ respectively  to the lines $t\subset \Pi_0$ and $s\subset \Pi_0$, we find 
$
\sigma 
(-\alpha_1:\alpha_0) = \pi_0 
$
and 
$
\tau
(-\beta_1:\beta_0) = \pi_0 
$. 
Therefore, the explicit formulas for $\sigma$ and $\tau$ are   
$$
(s_0:s_1) 
\mapsto 
\begin{vmatrix}
s_0 & s_1 \\ 
\alpha_1 \, \lambda_1 \, \sigma_1 & 
\alpha_0 \, \lambda_0 \,\sigma_0
\end{vmatrix}
= 
\begin{vmatrix}
s_0 & s_1 \\ 
A_0 & 
A_1
\end{vmatrix}
\ \ ,\ \ 
(t_0:t_1) 
\mapsto 
\begin{vmatrix}
t_0 & t_1 \\ 
\beta_1 \, \mu_1 \, \tau_1 & 
\beta_0 \, \mu_0 \,\tau_0
\end{vmatrix}
= 
\begin{vmatrix}
t_0 & t_1 \\ 
B_0 & 
B_1
\end{vmatrix}
\ .
$$
Before proving the formula for the third parameter, we prove the four claims in the statement.
First, the base locus of $\phi^{-1}$ follows immediately from \cref{type pyramidal}. 
Secondly, writing $I = 
(
a_0
, 
b_0
, 
c_0
)
\cap 
(
h,
c_1
)$ 
we have 
$f_0,f_1,f_2,f_3\in I_{(1,1,1)}$ by \cref{corollary: contractions pyramidal}. 
Since $\dim_\C I_{(1,1,1)} = 4$, it follows that $\C\langle f_0,f_1,f_2,f_3\rangle = I_{(1,1,1)}$.
Hence, $I$ defines the base locus of $\phi$.  
%the point and curve respectively defined by the ideals $(a_0,b_0,c_0)$ and $(h,c_1)$ lie in the base locus. 
%More explicitly, 
%by \cref{corollary: contractions pyramidal} the image of $a_0 = b_0 = 0$ must be $\mbf{A} = s\cap t$, and the image of $c_0 = 0$ is dense in $\Pi_1$. 
%Since $\mbf{A}$ and $\Pi_1$ do not intersect, $a_0 = b_0 = c_0 = 0$ must be a base point. 
%Similarly, we deduce that $(h,c_1)$ is a base curve. 
%Hence, by \cref{theorem: base locus classification} the identity \cref{saturation: pyramidal} holds.
The last two statements follow from the proof of \cref{type pyramidal}. 
Finally, we find 
$
\upsilon 
(-\gamma_1:\gamma_0) 
=
q
$
and $\upsilon$ is given explicitly by 
$$
(u_0:u_1) 
\mapsto 
\begin{vmatrix}
u_0 & u_1 \\ 
\gamma_1 \, \nu_1 \, \upsilon_1 & 
\gamma_0 \, \nu_0 \,\upsilon_0
\end{vmatrix}
= 
\begin{vmatrix}
u_0 & u_1 \\ 
C_0 & 
C_1
\end{vmatrix}
\ ,
$$
yielding $\phi^{-1}$ as in the statement.  
\end{proof}

\section{Scaffold birational maps} 
\label{sec: scaffold}

In this section, we study the third family of birational maps: the class of scaffold birational maps. 
%illustrated in \cref{fig: constraints} bottom-left. 
%Again, this name is motivated by the geometry of trilinear birational maps of types $(1,2,2)$, $(2,1,2)$ and $(2,2,1)$. 
%A ``scaffold'' typically consists of four lines $s_{00},s_{10},s_{01},s_{11}$ that support a sequence of planar platforms, like the $s$-parametric planes.  

\begin{definition}
\label{scaffold}
A trilinear rational map is scaffold if 
%it satisfies \cref{property: nondegeneracy} and
all the boundary surfaces are quadrics except for one of the three parameters, for which:
\begin{enumerate}
\item The two boundary surfaces are planes, that intersect at a line $\ell$ 
\item The four boundary lines intersect two lines $r_0,r_1$, and these two intersect $\ell$
\end{enumerate}
\end{definition}

Given four pairwise skew lines in $\P^3$, not lying on a common quadric, there are exactly two secant lines %(counting multiplicity) 
that intersect the four of them. 
%If the boundary $t$- and $u$-surfaces are quadrics, then the  boundary $s$-lines are mutually nonintersecting.
The two secants of the boundary lines are precisely $r_0$ and $r_1$.

\subsection{Geometric constraint}
\label{geometric constraint: scaffold}

As in \cref{geometric constraint: pyramidal}, we begin proving that a general birational map of type either $(1,2,2)$, $(2,1,2)$, or $(2,2,1)$ is scaffold. 
Once more, we restrict to type $(1,2,2)$. 

\begin{proposition} 
\label{type scaffold}
If $\phi$ is a general birational map of type $(1,2,2)$, then it is scaffold. Moreover,  
$$
T_0 \cap T_1 = s \cup r_0 \cup r_1 \cup y 
\ ,\ 
Y_0 \cap Y_1 = s \cup r_0 \cup r_1 \cup z 
\ ,
$$
where:
\vskip2pt
\begin{enumerate}
\item $s = \Sigma_0 \cap \Sigma_1$ is a line
\item $r_0,r_1$ are the unique two lines in $\P^3$ that meet $s_{jk}$ for every $0\leq j,k \leq 1$ 
\item $y$ is the line through the points $u_{00}\cap u_{01}$ and $u_{10}\cap u_{11}$
\item $z$ is the line through the points $t_{00}\cap t_{01}$ and $t_{10}\cap t_{11}$
\end{enumerate}
\end{proposition}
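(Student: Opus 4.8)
The plan is to mirror the structure of the proof of \cref{type pyramidal}, using the saturated base ideal structure for type $(1,2,2)$ maps from \cite{trilinear} and the fact that $\phi\circ\phi^{-1}$ is the identity on a dense open set. First I would invoke the analogue of \cite[Theorem 5.4]{trilinear} (the decomposition of the saturated ideal $(f_0,f_1,f_2,f_3):\mathfrak{N}^\infty$ for type $(1,2,2)$), which should express this ideal as an intersection of primary-like components: one component $(a_0, b, c)$ involving a linear form $a_0 = a_0(s_0,s_1)$ and two bilinear/quadratic forms in $(t_0,t_1)$ and $(u_0,u_1)$ respectively, and further components encoding the secant-line structure. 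Writing $\phi^{-1}$ as $(x) \mapsto (A_0:A_1)\times(B_0:B_1)\times(C_0:C_1)$ with $A_i$ linear and $B_j,C_k$ quadratic, the identity $\phi\circ\phi^{-1} = \mathrm{id}$ gives $(f_n(A,B,C))_n = F\cdot(x_0,x_1,x_2,x_3)$ for a form $F$ of the appropriate degree, so each pullback of a component ideal of the base ideal of $\phi$ must be principal.

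Next I would extract the geometric consequences. The pullback of the linear form $a_0(A_0,A_1)$ is principal and linear, defining a plane; because $a_0$ pairs with the two quadratic generators, those pullbacks are divisible by $a_0(A_0,A_1)$, forcing certain quadrics in the $t$- and $u$-pencils to be reducible with $\Pi_0 = \{a_0(A_0,A_1) = 0\}$ as a component. Concretely, $a_0(A_0,A_1)$ proportional to no $b_j$ or $c_k$ by itself, but the factorizations $b(B_0,B_1) = a_0(A_0,A_1)\cdot(\text{linear})$ and $c(C_0,C_1) = a_0(A_0,A_1)\cdot(\text{linear})$ will show that the four boundary $t$-lines $t_{ik} = \Sigma_i\cap(\text{something})$ and the four boundary $u$-lines all meet the line $s = \Sigma_0\cap\Sigma_1$, and additionally all meet a second common line — these two transversals of the four boundary $s$-lines $s_{jk}$ are exactly $r_0,r_1$. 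To identify $r_0,r_1$ I would argue that the reducible members of the $t$-pencil (resp.\ $u$-pencil) force $T_0\cap T_1$ (resp.\ $Y_0\cap Y_1$), a degree-$4$ curve since $T_j,Y_k$ are quadrics, to decompose; since $s$ and $r_0,r_1$ all lie on both quadrics $T_0,T_1$ (each is swept by lines meeting all boundary $s$-lines), we get $s\cup r_0\cup r_1$ as a sub-curve of the quartic $T_0\cap T_1$, leaving a residual line, which a parametric/degree count identifies as $y$ through $u_{00}\cap u_{01}$ and $u_{10}\cap u_{11}$; symmetrically for $Y_0\cap Y_1$ and $z$.

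The remaining points of \cref{scaffold} — that $r_0,r_1$ each intersect $\ell = s$, and that all four boundary lines of the distinguished parameter meet both $r_0$ and $r_1$ — follow from the incidence relations already assembled: $r_0,r_1$ lie on $T_0\cap T_1$ which contains $s$, and since $T_0\cap T_1 = s\cup r_0\cup r_1\cup y$ is connected (being a complete intersection of quadrics, hence connected), $r_0$ and $r_1$ must each meet $s$. For the ``general'' hypothesis I would note, as in \cref{type pyramidal}, that it guarantees \cref{property: nondegeneracy} and that the four boundary $s$-lines are pairwise skew and not on a common quadric, so the two transversals $r_0,r_1$ exist and are distinct (the remark following \cref{scaffold}).

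The main obstacle I anticipate is bookkeeping the base-ideal decomposition for type $(1,2,2)$: unlike the pyramidal case, where the second factor is linear and the structure is relatively transparent, here two of the three inverse components are quadratic, so the primary decomposition of $(f_0,\dots,f_3):\mathfrak{N}^\infty$ is more intricate and I must carefully track which bilinear/quadratic forms pair with which, and verify the divisibilities that produce the reducible quadrics. The other delicate point is the degree count identifying the residual lines $y$ and $z$ precisely — I would do this by restricting $\phi$ to the relevant parametric surfaces and matching the images of the boundary lines, or equivalently by computing that the residual line must pass through the two indicated intersection points of boundary lines, which are forced to lie on both quadrics of the pencil.
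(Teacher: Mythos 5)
Your overall plan coincides with the paper's proof: invoke the decomposition of the saturated base ideal for type $(1,2,2)$ from \cite[Theorem 5.6]{trilinear}, pull it back along $\phi^{-1}$, use that $\phi\circ\phi^{-1}$ is the identity up to a quartic factor so the pullback ideals are principal, extract divisibility relations, and decompose $T_0\cap T_1$ and $Y_0\cap Y_1$ into four lines. However, there are concrete gaps in the execution. First, the decomposition is mis-anticipated: all generators are linear forms in a single pair of variables, and the ideal is $(a_0,b_0,c_0)\cap(a_1,b_1,c_1)\cap(b_2,c_2)$, i.e.\ there are \emph{two} point-components. Your sketch only works with one linear form $a_0(s_0,s_1)$ and its divisibility relations, which can only produce one plane and one transversal; to obtain both $r_0$ and $r_1$ you need the factorizations $b_i(B_0,B_1)=a_i(A_0,A_1)\,F_i$ and $c_i(C_0,C_1)=a_i(A_0,A_1)\,G_i$ for \emph{both} $i=0,1$, together with $(A_0,A_1)=(a_0(A_0,A_1),a_1(A_0,A_1))$, giving $(B_0,B_1)=(A_0,A_1)\cap(a_0(A_0,A_1),F_1)\cap(F_0,a_1(A_0,A_1))\cap(F_0,F_1)$ and its analogue for $(C_0,C_1)$. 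This is precisely the bookkeeping you flagged as the anticipated obstacle, and it is where the proof actually lives.

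Second, two of your justifications do not hold as stated. The claim that $r_0,r_1\subset T_0\cap T_1$ because each $T_j$ ``is swept by lines meeting all boundary $s$-lines'' is false: $T_j$ contains only $s_{j0},s_{j1}$, so a common transversal of all four $s_{jk}$ need not lie on $T_j$. The containment must come out of the ideal-theoretic four-line decomposition above, and only \emph{afterwards} are two of those lines identified with $r_0,r_1$: since $s_{jk}$ and $s$ lie in the same ruling of $T_j$ (they are disjoint for general $\phi$), the lines cut out by $(a_0(A_0,A_1),F_1)$ and $(F_0,a_1(A_0,A_1))$ meet all four $s_{jk}$, and uniqueness of the two transversals of four skew lines not on a common quadric identifies them with $r_0,r_1$ — the logical direction is the reverse of what you wrote. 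Likewise, connectedness of the complete intersection $s\cup r_0\cup r_1\cup y$ only says each component meets the union of the others, not that $r_0$ and $r_1$ meet $s$; the correct (and immediate) argument is coplanarity: $r_i$ lies in the plane $a_i(A_0,A_1)=0$, which contains $s$ because $a_i(A_0,A_1)\in(A_0,A_1)$. Finally, your identification of $y$ through $u_{00}\cap u_{01}$ and $u_{10}\cap u_{11}$ (these points do lie on $T_0\cap T_1$ since $u_{ij}\subset T_j$ and $u_{i0},u_{i1}\subset\Sigma_i$) is salvageable for a general map, but you must still rule out their lying on $s$, $r_0$, or $r_1$; the paper avoids this by the ruling argument: $y$ lies in the ruling of $s$, each $u_{ij}$ meets $s$ and hence meets $y$, and since $y\not\subset\Sigma_i$ it must pass through $u_{i0}\cap u_{i1}$.
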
 

\begin{proof}
First, since $\phi$ is general \cref{property: nondegeneracy} is satisfied. 
By \cite[Theorem $5.6$]{trilinear}, we find 
\begin{equation} 
(f_0,f_1,f_2,f_3)  : \mathfrak{N}^\infty  = (a_0 , b_0 , c_0) \cap (a_1 , b_1 , c_1) \cap (b_2 , c_2) 
\end{equation}
for some linear $a_i = a_i(s_0,s_1)$, $b_j = b_j(t_0,t_1)$, and $c_k = c_k(u_0,u_1)$.  
Additionally, $\phi^{-1}$ is defined by 
$$
(x_0,x_1,x_2,x_3) \mapsto (A_0:A_1)\times (B_0:B_1)\times (C_0:C_1)
$$
for some linear $A_i = A_i(\mbf{x})$ and quadratic $B_j = B_j(\mbf{x})$ and $C_k = C_k(\mbf{x})$. 
In particular, we find % the pullback ideal of $I$ satisfies 
$$
(f_0(A_0,A_1,B_0,B_1,C_0,C_1) \, , \, \ldots \, , \, f_3(A_0,A_1,B_0,B_1,C_0,C_1)) 
= 
H
\cdot 
(x_0 , x_1 , x_2 , x_3)
$$ 
for some quartic form $H = H(\mbf{x})$. 
Therefore, the ideals in $\R[x_0,x_1,x_2,x_3]$
$$
(a_0(A_0,A_1),b_0(B_0,B_1),c_0(C_0,C_1)) 
,
(a_1(A_0,A_1),b_1(B_0,B_1),c_1(C_0,C_1)) 
,
(b_2(B_0,B_1),c_2(C_0,C_1))
$$ 
are principal and respectively generated by two linear and one quadratic forms. 
Thus, 
for each $i=0,1$ the polynomials $b_i(B_0,B_1)$ and $c_i(C_0,C_1)$ are divisible by $a_i(A_0,A_1)$, and we can write 
$$
b_i(B_0,B_1) = a_i(A_0,A_1) \, F_i 
\ ,\ 
c_i(C_0,C_1) = a_i(A_0,A_1) \, G_i 
$$  
for some linear forms $F_i = F_i(\mbf{x})$ and $G_i = G_i(\mbf{x})$. 
Therefore,  
\begin{gather*}
(B_0,B_1) = (b_0(B_0,B_1), b_1(B_0,B_1)) 
= 
%( a_0(A_0,A_1) \, F_0 , a_1(A_0,A_1) \, F_1 ) = \\[2pt]
( A_0 , A_1 ) 
\cap 
( a_0(A_0,A_1) , F_1 ) 
\cap 
( F_0 , a_1(A_0,A_1) ) 
\cap 
( F_0 , F_1 )
\ ,
\end{gather*} 
where we have used that $(A_0,A_1) = ( a_0(A_0,A_1) , a_1(A_0,A_1) ) $. 
Analogously,  
$$
(C_0,C_1) 
= 
( A_0 , A_1 ) 
\cap 
( a_0(A_0,A_1) , G_1 ) 
\cap 
( G_0 , a_1(A_0,A_1) ) 
\cap 
( G_0 , G_1 )
\ .
$$
Therefore, the two intersections $T_0\cap T_1$ and $Y_0\cap Y_1$ define unions of four lines. 
Since smooth quadrics are doubly ruled (i$.$e$.$ isomorphic to $\P^1\times\P^1$), and three skew lines determine a unique quadric, it follows that in each intersection there is a pair of lines in each ruling of the corresponding boundary surfaces. 
Moreover, by definition $\Sigma_0,\Sigma_1$ are planes and the ideal $(A_0,A_1)$ determines $s = \Sigma_0 \cap \Sigma_1$.  
Hence, the line $y$ (resp$.$ $z$) defined by $(F_0,F_1)$ (resp$.$ $(G_0,G_1)$) lies in the same ruling of $s$ in $T_j$ (resp$.$ $Y_k$). 
Additionally, for each $0 \leq i,j \leq 1$ the line $u_{ij}\subset T_j$ intersects $s$, since $u_{ij}\subset \Sigma_i$.  
Thus, $s$ and $u_{ij}$ belong to different rulings, and $y$ intersects $u_{ij}$ as well. 
Since $u_{i0},u_{i1}\subset \Sigma_i$ and $y\not\subset \Sigma_i$, $y$ must meet the point $u_{i0} \cap u_{i1}$. 
Therefore, $y$ is as claimed in the statement. 
With a parallel argument, we derive that $z$ is also as claimed. 
 
We conclude proving that the lines $r_0,r_1$ lie in both intersections.  
The line $s_{jk}\subset T_j$ (resp$.$ $s_{jk}\subset Y_k$) does not meet $s$ for any $0\leq j,k \leq 1$, so they lie in the same ruling. 
Therefore, the lines defined by 
$
( a_0(A_0,A_1) , F_1 ) 
$ and 
$
( F_0 , a_1(A_0,A_1) )
$ 
(resp$.$ 
$
( a_0(A_0,A_1) , G_1 ) 
$ and 
$
( G_0 , a_1(A_0,A_1) ) 
$)
must intersect the four boundary $s$-lines. 
Hence, they are the unique two with this property, namely $r_0$ and $r_1$. 
Since $r_0, r_1$ also  intersect $s$, it follows that $\phi$ is scaffold.
\end{proof}

In practice, the control points of a scaffold map can be generated with the following construction. 

\begin{construction}
\label{construction: scaffold}
The control points of a scaffold rational map can be generated as follows: 
\vskip2pt
\begin{enumerate}
\item Choose a line $s$ in $\P_\R^3$
\item Choose two skew lines $r_0,r_1$ in $\P_\R^3$ intersecting $s$
\item For each $0\leq j,k \leq 1$, choose pairwise distinct affine lines $s_{jk}$ meeting both $r_0,r_1$% and not contained in $x_0 = 0$
\item Choose distinct planes $\Sigma_0,\Sigma_1$ through $s$, not containing $r_0,r_1$, such that $\Sigma_i \cap s_{jk}$ is affine
\item For each $0\leq i,j,k \leq 1$, define $\mbf{P}_{ijk} = \Sigma_i \cap s_{jk}$ 
\end{enumerate}
\end{construction} 

\subsection{Birational weights and tensor rank criterion}
\label{tensor criterion: scaffold}

In all the upcoming statements of \cref{sec: scaffold}, when $\phi$ is scaffold we assume that $\Sigma_0,\Sigma_1$ are the planar boundary surfaces. 

\begin{notation}
Let $\phi$ be scaffold, and maintain the notation of items $1$-$4$ in \cref{type scaffold}. 
Then, we find planes $\Pi_i, \Theta_j, P_k$ for each $0\leq i,j,k\leq 1$, respectively defined by 
$
\pi_i = \bs{\pi}_i \cdot \mbf{x}^T
$, 
$
\theta_j = \bs{\theta}_j \cdot \mbf{x}^T 
$, 
$
\rho_k = \bs{\rho}_k \cdot \mbf{x}^T
$ 
for some $\bs{\pi}_i, \bs{\theta}_j, \bs{\rho}_k$ in $\R^4$, such that: 
\vskip2pt
\begin{enumerate}
\item The plane $\Pi_i$ is defined by the lines $s$ and $r_i$. In particular, we find a $(\lambda_{0i}:\lambda_{1i})$ in $\P_\R^1$ such that 
$
\pi_i = \lambda_{0i} \, \sigma_0 + \lambda_{1i} \, \sigma_i
$
\item The plane $\Theta_j$ is defined by the lines $y$ and $r_j$
\item The plane $P_k$ is defined by the lines $z$ and $r_k$
\end{enumerate} 
\vskip2pt
\noindent Additionally, for each $l = 0,1$ we define 
$$ 
\Delta_{ijk}(l,y) = 
\frac{1}{
\langle \bs{\theta}_l , \mbf{P}_{ijk} \rangle
}
\ \ ,\ \ 
\Delta_{ijk}(l,z) = 
\frac{1}{
\langle \bs{\rho}_l , \mbf{P}_{ijk} \rangle
}
\ .
$$ 
%Notice that $\Delta_{ijk}(l,y)$ (resp$.$ $\Delta_{ijk}(l,z)$) is not well defined if and only if $\mbf{P}_{ijk}$ lies on the line $r_l$, a possibility that we exclude in \cref{construction: scaffold}.  
\end{notation}

\begin{remark}
\label{remark: deltas scaffold}
For scaffold maps, both $\Delta_{ijk}(l,y)$ and $\Delta_{ijk}(l,z)$ are well-defined for every $0\leq i,j,k\leq 1$. 
Specifically, 
$\Delta_{ijk}(l,y)$ (resp$.$ $\Delta_{ijk}(l,z)$) is not well-defined if and only if $\mbf{P}_{ijk}$ lies on the line $r_l$.  
If this is the case, namely if $\mbf{P}_{ijk} = \Sigma_i \cap s_{jk}$ lies on $r_l$, it follows that $\Sigma_i = \text{Plane}(s,r_l)$. 
Hence, we can write 
$\mbf{P}_{ij'k'} = 
\Sigma_i \cap s_{j'k'} = 
\text{Plane}(s,r_l)
\cap
s_{j'k'}
= 
r_l
\cap
s_{j'k'} 
$ for every $0\leq j',k'\leq 1$. 
Therefore, $\mbf{P}_{i00},\mbf{P}_{i10},\mbf{P}_{i01}$, and $\mbf{P}_{i11}$ are aligned, and $\Sigma_i = r_l$ against \cref{property: nondegeneracy}.
\end{remark}

%As in \cref{tensor criterion: hexahedral} and \cref{tensor criterion: pyramidal}, w
We now prove a characterization of birationality relying on tensor rank for scaffold maps. 

\begin{theorem} 
\label{theorem: tensor scaffold}
Let $\phi$ be scaffold. The following are equivalent:
\vskip2pt
\begin{enumerate}
\item $\phi$ is birational of type $(1,2,2)$
\item One of the four tensors 
$$
W(l,y) = \left( \frac{w_{ijk}}{\Delta_{ijk}(l,y)} \right)_{0\leq i,j,k \leq 1} \ \ , \ \ 
W(l,z) = \left( \frac{w_{ijk}}{\Delta_{ijk}(l,z)} \right)_{0\leq i,j,k \leq 1}
$$
for each $l = 0,1$, has rank one 
\item The four tensors above have rank one 
\end{enumerate}
\end{theorem}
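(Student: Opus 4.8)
The plan is to reduce the theorem to the existence/non-existence of linear syzygies, exactly as in the proofs of \cref{theorem: tensor hexahedral} and \cref{theorem: tensor pyramidal}, and then turn each syzygy condition into a rank-one condition on one of the four flattening-type matrices that the four tensors $W(l,y)$, $W(l,z)$ encode. The equivalences $(2)\Leftrightarrow(3)$ should be a purely geometric/combinatorial consistency statement — once one of the rank conditions holds, the weights are forced into the product form $w_{ijk}=\alpha_i\beta_j\gamma_k\Delta_{ijk}$ (for the appropriate $\Delta$), and then the remaining three rank-one conditions follow automatically because all four tensors are built from the same weights and from planes in the two pencils through $s$. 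So the real content is $(1)\Leftrightarrow(2)$, and within that, the delicate point is matching the $(1,0,0)$-syzygy condition of \cref{lemma: linear syz s} with a rank-one condition on one of these four tensors.

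Concretely, first I would invoke \cite[Theorem 6.1]{trilinear} to say that a scaffold map $\phi$ is birational of type $(1,2,2)$ if and only if $\mbf{f}$ has a syzygy of degree $(1,0,0)$ but none of degrees $(0,1,0)$ or $(0,0,1)$; since $T_0,T_1$ and $Y_0,Y_1$ are smooth quadrics, the last two are automatically excluded (as in the pyramidal case), so birationality of type $(1,2,2)$ is equivalent to the existence of a $(1,0,0)$-syzygy. By \cref{lemma: linear syz s} this is the single matrix rank condition \cref{matrix: syz s}, i.e. rank one of the $2\times 4$ matrix with entries $w_{1jk}\langle\bs{\sigma}_0,\mbf{P}_{1jk}\rangle$ and $w_{0jk}\langle\bs{\sigma}_1,\mbf{P}_{0jk}\rangle$. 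The heart of the proof is then to rewrite this matrix, up to invertible rescaling of rows, as one of the flattenings of $W(l,y)$ or $W(l,z)$. Here I would use item $1$ of the scaffold Notation: each plane $\Pi_i$ through $s$ and $r_i$ satisfies $\pi_i=\lambda_{0i}\sigma_0+\lambda_{1i}\sigma_i$, so $\langle\bs{\sigma}_0,\mbf{P}_{1jk}\rangle$ and $\langle\bs{\sigma}_1,\mbf{P}_{0jk}\rangle$ can be traded, via the identity $\langle\bs{\pi}_l,\mbf{P}_{ijk}\rangle=\Delta_{ijk}(l,\cdot)^{-1}$, for the entries $w_{ijk}/\Delta_{ijk}(l,y)$ or $w_{ijk}/\Delta_{ijk}(l,z)$ — the choice of $l$ and of $y$ versus $z$ corresponds to which plane $\Theta_l$ or $P_l$ one uses. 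I expect that exactly the four planes $\Theta_0,\Theta_1,P_0,P_1$ are the ones containing the two secants $r_0,r_1$ together with $y$ or $z$, and that each gives a legitimate substitution, producing the four candidate tensors; the rank-one condition on the $s$-flattening of any one of them is then equivalent to \cref{matrix: syz s}, and \cite[Proposition 3.11]{ottaviani_tensor_rank} upgrades a single rank-one flattening to rank one of the whole $2\times2\times2$ tensor.

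For $(2)\Rightarrow(3)$, I would argue that if, say, $W(0,y)$ has rank one then $\phi$ is birational (by the just-established $(2)\Rightarrow(1)$), so by the analogue of \cref{corollary: weight formulas hexahedral} for scaffold maps — which I would state and deduce along the way — the weights have the form $w_{ijk}=\alpha_i\beta_j\gamma_k\Delta_{ijk}(0,y)$; re-expressing $\Delta_{ijk}(l,\cdot)$ in terms of $\Delta_{ijk}(0,y)$ using the linear relations among the planes $\Theta_0,\Theta_1,P_0,P_1$ in the two pencils through $s$, the other three tensors are seen to differ from $W(0,y)$ only by a rank-one rescaling in each slice direction, hence also have rank one. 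The implication $(3)\Rightarrow(2)$ is trivial. The main obstacle I anticipate is precisely the bookkeeping of which of the four planes yields which flattening, and verifying that the resulting $\Delta_{ijk}(l,y)$, $\Delta_{ijk}(l,z)$ indices align correctly with the index pattern of \cref{matrix: syz s} — i.e. making the combinatorics of ``which $2\times4$ submatrix is which flattening'' come out right, since unlike the hexahedral case the three pencils are no longer symmetric. A secondary check is confirming that $y$ and $z$ (defined via the points $u_{00}\cap u_{01},\,u_{10}\cap u_{11}$ and $t_{00}\cap t_{01},\,t_{10}\cap t_{11}$ in \cref{type scaffold}) genuinely give rise to planes $\Theta_j$, $P_k$ through $r_j$, $r_k$ with the stated defining vectors, which is where \cref{remark: deltas scaffold} guarantees the $\Delta$'s are well-defined.
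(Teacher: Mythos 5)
There is a genuine gap, and it sits exactly where you flagged the ``delicate point''. Your plan hinges on (a) reducing type-$(1,2,2)$ birationality to the existence of a single syzygy of degree $(1,0,0)$, and (b) rewriting the matrix of \cref{matrix: syz s} as a flattening of one of the tensors $W(l,y)$, $W(l,z)$ via an identity $\langle\bs{\pi}_l,\mbf{P}_{ijk}\rangle=\Delta_{ijk}(l,\cdot)^{-1}$. Neither step works. For (b): the scaffold $\Delta$'s are defined through the planes $\Theta_l$ (through $y$ and $r_l$) and $P_l$ (through $z$ and $r_l$), and these planes do \emph{not} contain $s=\Sigma_0\cap\Sigma_1$ (the lines $y,z$ lie in the same ruling as $s$ on the quadrics, hence are skew to it), so they are not in the pencil spanned by $\bs{\sigma}_0,\bs{\sigma}_1$. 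The pyramidal-type identity $\lambda_{i^*}\langle\bs{\sigma}_{i^*},\mbf{P}_{ijk}\rangle=\langle\bs{\pi}_0,\mbf{P}_{ijk}\rangle$ has no analogue here, and \cref{matrix: syz s} is not a row-rescaled flattening of any of the four tensors. For (a): even granting the substitution, rank one of a \emph{single} flattening does not imply rank one of a $2\times2\times2$ tensor (take $e_1\otimes(e_1\otimes e_1+e_2\otimes e_2)$: its first flattening has rank one but the tensor has rank two); \cite[Proposition 3.11]{ottaviani_tensor_rank} requires all flattenings. Relatedly, the reduction itself is false: a codimension count shows it cannot hold, since rank one of the $2\times4$ matrix in \cref{matrix: syz s} is codimension $3$ in the weights while rank one of the tensor is codimension $4$. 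Indeed, for type $(1,2,2)$ the criterion of \cite[Theorem 6.1]{trilinear} needs more than the $(1,0,0)$ syzygy: the paper's own proof must additionally exhibit a syzygy of degree $(1,1,0)$, namely $\bs{\theta}=a_1b_1\,\bs{\theta}_0-a_0b_0\,\bs{\theta}_1$, before that theorem can be invoked.

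The paper's argument is consequently of a different nature. For $2\Rightarrow1$ it reads the rank-one hypothesis on, say, $W(0,y)$ as the factorization $\langle\bs{\theta}_0,\mbf{f}\rangle=a_0b_0c_2$, then uses the scaffold geometry (the boundary $s$-lines meet $\Theta_0$ along $r_0$, the boundary $u$-lines along $y$) to get contractions of $a_0=0$ and $c_2=0$, from which both the $(1,0,0)$ syzygy (via the pencil isomorphism $\sigma$, whose pullback is a binary quadratic vanishing at three points) and the $(1,1,0)$ syzygy above are constructed. For $1\Rightarrow3$ it uses \cref{type scaffold}, i.e. $T_0\cap T_1=s\cup r_0\cup r_1\cup y$ and its $z$-analogue, together with the pullback factorization $(\tau_0(\mbf{f}),\tau_1(\mbf{f}))=a_0a_1c_2h\cdot(t_0,t_1)$, to obtain all four factorizations $\langle\bs{\theta}_l,\mbf{f}\rangle=a_lb_lc_2$, $\langle\bs{\rho}_l,\mbf{f}\rangle=a_lb_2c_l$ at once; your alternative route $2\Rightarrow3$ by ``rescaling slices'' also misreads the geometry, since $\Theta_0,\Theta_1,P_0,P_1$ lie in no common pencil through $s$, and the fact that the ratio tensor $\bigl(\Delta_{ijk}(0,y)/\Delta_{ijk}(1,y)\bigr)$ has rank one is itself a consequence of these geometric factorizations, not a formal bookkeeping identity. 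To repair your proposal you would have to replace the flattening-matching step by these contraction/factorization arguments, which is essentially to reproduce the paper's proof.
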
 

\begin{proof}
First, we prove that $2$ implies $1$. 
Without loss of generality, we assume that 
$
W(0,y) 
= 
(\alpha_{00},\alpha_{10}) \otimes (\beta_{00},\beta_{10}) \otimes (\gamma_{02},\gamma_{12})
$. 
%for some $(\alpha_{00}:\alpha_{10}) \times (\beta_{00}:\beta_{10}) \times (\gamma_{02}:\gamma_{12})$ in $\PPPR$. 
In particular, we have the factorization 
$$
\langle
\bs{\theta}_0
, 
\mbf{f} 
\rangle
= 
\sum_{0\leq i,j,k \leq 1}
w_{ijk}
\langle
\bs{\theta}_0
, 
\mbf{P}_{ijk} 
\rangle
s_i t_j u_k
= 
\sum_{0\leq i,j,k \leq 1}
\left(
\frac{w_{ijk}}{\Delta_{ijk}}
\right)
s_i t_j u_k
%= 
%\sum_{0\leq i,j,k\leq 1}
%(\alpha_{i0} s_i) (\beta_{j0} t_j) (\gamma_{k2} u_k)
= 
a_0 b_0 c_2
\ ,
$$
where 
\begin{equation}
\label{eq: a0 scaffold tensor}
a_0(s_0,s_1) = 
\begin{vmatrix}
\phantom{-}s_0 & s_1 \\
-\alpha_{10} & \alpha_{00}
\end{vmatrix}
\ \ ,\ \ 
b_0(t_0,t_1) = 
\begin{vmatrix}
\phantom{-}t_0 & t_1\\ 
-\beta_{10} & \beta_{00}
\end{vmatrix}
\ \ ,\ \ 
c_2(u_0,u_1) = 
\begin{vmatrix}
\phantom{-}u_0 & u_1\\
-\gamma_{12} & \gamma_{02}
\end{vmatrix}
\ .
\end{equation} 
Therefore, the images by $\phi$ of the surfaces defined by $a_0 = 0$, $b_0 = 0$, and $c_2 = 0$ lie in $\Theta_0$. 
We make the following observations:
\vskip2pt
\begin{enumerate}
\item As $\phi$ is scaffold, the four boundary $s$-lines meet $\Theta_0$ at $r_0$. 
Hence, $\phi$ contracts $a_0 = 0$ to $r_0$ 
\item Similarly, the four boundary $u$-lines meet $\Theta_0$ at the line $y$.
Hence, $\phi$ contracts $c_2 = 0$ to $y$
\end{enumerate}
\vskip2pt 
\noindent In particular, the line isomorphism $\sigma: \P^1\xrightarrow{}L_s\subset\left(\P^3\right)^\vee$ defined by
$
(1:0) \mapsto \sigma_0
$, 
$ 
(0:1) \mapsto \sigma_1
$, 
$
(-\alpha_{10}:\alpha_{00}) \mapsto \pi_0
$
yields a syzygy of degree $(1,0,0)$ of $\mbf{f}$, since $\sigma(s_0:s_1)(\mbf{f})$ reduces to a bivariate quadratic form that vanishes at three distinct points. 
Additionally, $\sigma$ parametrizes the pencil of planes spanned by $\Sigma_0,\Sigma_1$. 
%More explicitly, $\sigma$ can be expressed as  
%$$
%\sigma: 
%(s_0:s_1) \mapsto 
%\bs{\sigma} = 
%\bs{\sigma}(s_0,s_1) = 
%\begin{vmatrix}
%s_0 & s_1\\ 
%\lambda_{10} \, \alpha_{10} \, \bs{\sigma}_1
% & 
%\lambda_{00} \, \alpha_{00} \, \bs{\sigma}_0
%\end{vmatrix}
%\ .
%$$ 
%Moreover, 
%writing $\mbf{P}_{jk} = \mbf{P}_{jk}(s_0,s_1)$ as 
%$$ 
%\mbf{f} 
%= \sum_{0\leq i,j,k \leq 1} w_{ijk} \, \mbf{P}_{ijk} \, s_i t_j u_k
%= \sum_{0\leq j,k \leq 1} \mbf{P}_{jk}(s_0,s_1) \, t_j u_k
%$$ 
%for each $0\leq j,k \leq 1$, the homogeneous bivariate polynomial 
%$\langle \bs{\sigma} , \mbf{P}_{jk} \rangle$ is quadratic  with three distinct zeros, hence identically zero. 
Thus, as $r_1$ intersects $s$ the plane $\Pi_1$ lies in the image of $\sigma$. Hence, we find a point $(\alpha_{01}:\alpha_{11})$ in $\P_\R^1$ for which 
$\sigma(-\alpha_{11},\alpha_{01}) = \pi_1$.
By the same observation as before, $\phi$ contracts the surface defined by 
$
a_1(s_0,s_1) 
= 
\alpha_{01} s_0 
+ 
\alpha_{11} s_1
$
%\begin{equation}
%\label{eq: a1 scaffold tensor}
%a_1(s_0,s_1) = 
%\begin{vmatrix}
%s_0 & s_1 \\ 
%-\alpha_{11} & \alpha_{01}
%\end{vmatrix}
%\end{equation}
to $r_1$. 
Furthermore, as $\Theta_1$ contains the lines $y$ and $r_1$ we find the factorization 
$
\langle
\bs{\theta}_1
, 
\mbf{f} 
\rangle
= 
a_1 
b_1
c_2
$
where 
$
b_1(t_0,t_1) = 
\beta_{01} t_0 + \beta_{11} t_1
%\begin{vmatrix}
%t_0 & t_1\\ 
%-\beta_{11} & \beta_{01}
%\end{vmatrix}
$
for some $(\beta_{01}:\beta_{11})$ in $\P_\R^1$. 
Therefore, by definition 
$
\bs{\theta}(s_0,s_1,t_0,t_1)
=
a_1
b_1
\,
\bs{\theta}_0
-
a_0
b_0
\,
\bs{\theta}_1
$
is a syzygy of $\mbf{f}$ of degree $(1,1,0)$. 
By hypothesis, $\mbf{f}$ does not admit syzygies of degrees $(0,1,0)$ and $(0,0,1)$, since $T_j$ and $Y_k$ are smooth quadrics for each $0\leq j,k\leq 1$. 
Hence, by \cite[Theorem $6.1$]{trilinear} $\phi$ is birational of type $(1,2,2)$.

We conclude proving that $1$ implies $3$, since the implication from $3$ to $2$ is immediate.  
If $\phi$ is birational of type $(1,2,2)$, by \cref{trilinear birational} $\mbf{f}$ admits a syzygy of degree $(1,0,0)$. 
Moreover, by \cref{lemma: linear syz s} this syzygy has the form 
$
\bs{\sigma}(s_0,s_1) = \alpha_0 \, \bs{\sigma}_0 \, s_0 + \alpha_1 \, \bs{\sigma}_1 \, s_1 
$. 
%In particular, it parametrizes the pencil of planes spanned by $\Sigma_0,\Sigma_1$.  
Since the lines $r_0,r_1$ intersect $s$, $\Pi_0$ and $\Pi_1$ lie in the pencil of $s$-planes. 
Therefore, for each $i=0,1$ we find a $a_i(s_0,s_1) = \alpha_{0i}s_0 + \alpha_{1i}s_1$ such that $\phi$ contracts $a_i = 0$ to $r_i$, as the image lies on $\Pi_i \cap \bs{\sigma}(-\alpha_{1i},\alpha_{0i}) = r_i$. 
Moreover, the pullback of $s$ is the surface defined by the bilinear polynomial $h = h(t_0,t_1,u_0,u_1)$ whose coefficients (in the monomial basis) are any of the two proportional rows of \cref{matrix: syz s}. 
On the other hand, by \cref{type scaffold} the ideal $(\tau_0,\tau_1)\subset \R[x_0,x_1,x_2,x_3]$ defines 
$
T_0 \cap T_1 = 
s \cup r_0 \cup r_1 \cup y
$. 
As $\phi$ is birational with a quadratic inverse on the second parameter, by the contractions explained we have  
$$
(\tau_0(\mbf{f}),\tau_1(\mbf{f}) )
= 
a_0 
a_1 
c_2
h 
\cdot 
(t_0,t_1) 
$$
for some linear $c_2 = c_2(u_0,u_1)$.
In particular, $\phi$ contracts $c_2 = 0$ to $y$ and we find the factorizations 
$
\langle 
\bs{\theta}_0 , \mbf{f}
\rangle 
= 
a_0 b_0 c_2
$,
$
\langle 
\bs{\theta}_1 , \mbf{f}
\rangle 
= 
a_1 b_1 c_2
$ 
for some linear $b_j = b_j(t_0,t_1)$, for each $j = 0,1$. 
Equivalently, the tensors $W(0,y)$, $W(1,y)$ have rank one. 
With a parallel argument, we conclude that $W(k,z)$ has rank one for each $k = 0,1$. 
\end{proof}
  
\begin{corollary}
\label{corollary: weight formulas scaffold}
Let $\phi$ be scaffold. 
Then, $\phi$ is birational of type $(1,2,2)$ if and only if one, and therefore all up to a nonzero scalar multiplying all the weights, of the following conditions holds:
\vskip2pt 
\begin{enumerate}
\item $w_{ijk} = \alpha_{i0} \, \beta_{j0} \, \gamma_{k2} \, \Delta_{ijk}(0,y)$ for each $0\leq i,j,k\leq 1$
\item $w_{ijk} = \alpha_{i1} \, \beta_{j1} \, \gamma_{k2} \, \Delta_{ijk}(1,y)$ for each $0\leq i,j,k\leq 1$
\item $w_{ijk} = \alpha_{i0} \, \beta_{j2} \, \gamma_{k0} \, \Delta_{ijk}(0,z)$ for each $0\leq i,j,k\leq 1$
\item $w_{ijk} = \alpha_{i1} \, \beta_{j2} \, \gamma_{k1} \, \Delta_{ijk}(1,z)$ for each $0\leq i,j,k\leq 1$
\end{enumerate}
\vskip2pt 
\noindent for some $(\alpha_{0i}:\alpha_{1i})$, $(\gamma_{0j}:\beta_{1j})$, $(\gamma_{0k}:\gamma_{1k})$ in $(\P_\R^1)^3$.
\end{corollary}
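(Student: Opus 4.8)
The plan is to read the corollary off from \cref{theorem: tensor scaffold} together with the elementary description of rank-one $2\times 2\times 2$ tensors. Recall that a tensor of format $2\times 2\times 2$ has rank one precisely when it is a pure tensor $\bs{\alpha}\otimes\bs{\beta}\otimes\bs{\gamma}$, and that, since the four tensors $W(0,y)$, $W(1,y)$, $W(0,z)$, $W(1,z)$ occurring in \cref{theorem: tensor scaffold} have real entries, such a decomposition can always be taken with real factor vectors (a real tensor of rank one has a real pure-tensor decomposition, as one sees by factoring a nonzero real rank-one slice and checking that the remaining proportionality constant is real). Thus the whole argument reduces to translating, tensor by tensor, the rank-one condition into an identity on the weights $w_{ijk}$.

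Concretely, I would proceed as follows. By \cref{theorem: tensor scaffold}, $\phi$ is birational of type $(1,2,2)$ if and only if one of $W(0,y),W(1,y),W(0,z),W(1,z)$ has rank one, in which case all four do. Consider $W(0,y)=\bigl(w_{ijk}/\Delta_{ijk}(0,y)\bigr)_{0\le i,j,k\le 1}$. Its having rank one means $W(0,y)=(\alpha_{i0})\otimes(\beta_{j0})\otimes(\gamma_{k2})$ for real vectors $(\alpha_{00},\alpha_{10})$, $(\beta_{00},\beta_{10})$, $(\gamma_{02},\gamma_{12})$; that is, $w_{ijk}/\Delta_{ijk}(0,y)=\alpha_{i0}\beta_{j0}\gamma_{k2}$ for all $i,j,k$. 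By \cref{remark: deltas scaffold} every $\Delta_{ijk}(0,y)$ is a finite nonzero real number under \cref{property: nondegeneracy}, so clearing the denominator gives exactly $w_{ijk}=\alpha_{i0}\beta_{j0}\gamma_{k2}\,\Delta_{ijk}(0,y)$, which is condition $1$. When $\phi$ is birational it is dominant, so $W(0,y)\neq 0$, none of the three factor vectors vanishes, and each is determined up to a nonzero real scalar; hence the triple of projective classes of these factor vectors is a well-defined point of $(\P^1_\R)^3$. Applying the same computation verbatim to $W(1,y)$, $W(0,z)$, $W(1,z)$ yields conditions $2$, $3$, $4$ respectively, and in each direction the chain "$w$ has the displayed form $\iff$ the corresponding tensor has rank one $\iff$ $\phi$ is birational of type $(1,2,2)$" is what gives the stated equivalence.

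It remains to justify the clause "one, and therefore all up to a nonzero scalar multiplying all the weights": by \cref{theorem: tensor scaffold} the four rank-one conditions hold simultaneously, so as soon as one of the four displayed identities holds for a given weight tensor, the other three hold as well; the only residual freedom is the overall nonzero constant by which all the $w_{ijk}$ may be rescaled without changing $\phi$, which is the same scalar left undetermined by passing from the factor vectors to their projective classes. I do not anticipate any genuine obstacle: the only point requiring attention is the interchangeability of "$W(l,\cdot)$ has rank one" with the corresponding weight identity, which is ensured precisely by the finiteness and nonvanishing of the $\Delta$'s recorded in \cref{remark: deltas scaffold}; everything else is bookkeeping on top of \cref{theorem: tensor scaffold}.
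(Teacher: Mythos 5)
Your argument is exactly the route the paper intends: the corollary is an immediate translation of \cref{theorem: tensor scaffold}, since a nonzero $2\times2\times2$ tensor has rank one precisely when it is a pure tensor, and the nonvanishing of the $\Delta_{ijk}(l,y)$, $\Delta_{ijk}(l,z)$ (\cref{remark: deltas scaffold}) lets one clear denominators to get the displayed weight formulas, with the real pure-tensor decomposition and the overall-scalar ambiguity handled as you describe. No gaps; this matches the paper's (implicit) proof.
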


\subsubsection{The contractions of a scaffold birational map}

\begin{notation}
Let $\phi$ be scaffold with weights as in \cref{corollary: weight formulas scaffold}. 
In particular, $\phi$ is birational of type $(1,2,2)$, and by \cref{type scaffold} we have the following:
\vskip2pt
\begin{enumerate}
\item The rank-two quadrics $\Pi_0\cup\Theta_1$ and $\Pi_1\cup\Theta_0$ belong to the pencil of $t$-surfaces
\item The rank-two quadrics $\Pi_0\cup P_1$ and $\Pi_1\cup P_0$ belong to the pencil of $u$-surfaces
\item The unique smooth quadric $Q$ defined by the lines $s$, $y$, and $z$ belongs to the two pencils of $t$- and $u$-surfaces. In particular, $Q$ is defined by a quadratic form $q = q(\mbf{x})$, and we find points $(\mu_0:\mu_1)$ in $(\nu_0:\nu_1)$ in $\P_\R^1$ such that 
$ 
q = \mu_0 \, \tau_0 + \mu_1 \, \tau_1
=
\nu_0 \, \upsilon_0 + \nu_1 \, \upsilon_1
$
\end{enumerate}  
\end{notation}

As for the class of pyramidal maps, the rank-one conditions of \cref{theorem: tensor scaffold} determine the contractions of a scaffold birational map.

\begin{corollary}
\label{corollary: contractions scaffold}
Let $\phi$ be scaffold with weights as in \cref{corollary: weight formulas scaffold}, and let 
\begin{equation}
a_i(s_0,s_1)
= 
\begin{vmatrix}
\phantom{-}s_0 & s_1 \\
-\alpha_{1i} & \alpha_{0i}
\end{vmatrix}
\ ,\ 
b_j(t_0,t_1)
= 
\begin{vmatrix}
\phantom{-}t_0 & t_1 \\
-\beta_{1j} & \beta_{0j}
\end{vmatrix}
\ ,\ 
c_k(u_0,u_1)
= 
\begin{vmatrix}
\phantom{-}u_0 & u_1 \\
-\gamma_{1k} & \gamma_{0k}
\end{vmatrix}
\end{equation}
for $i=0,1$ and $0\leq j,k\leq 2$. 
Then, we have the following:
\vskip2pt
\begin{enumerate}
\item For each $i = 0,1$, $\phi$ contracts the surface $a_i = 0$ to the line $r_i$
\item $\phi$ contracts the surface $b_2 = 0$ to the line $z$ 
\item $\phi$ contracts the surface $c_2 = 0$ to the line $y$
\item For each $j = 0,1$, the image of the surface $b_j = 0$ is dense in the plane $\Theta_j$
\item For each $k = 0,1$, the image of the surface $c_k = 0$ is dense in the plane $P_k$
\end{enumerate}
\end{corollary}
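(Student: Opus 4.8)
The plan is to reduce all five assertions to the factorizations of the trilinear forms $\langle \bs{\theta}_j , \mbf{f} \rangle$ and $\langle \bs{\rho}_k , \mbf{f} \rangle$ that were already produced in the proof of \cref{theorem: tensor scaffold}, together with the geometry of the auxiliary planes $\Pi_i$, $\Theta_j$, $P_k$ of \cref{type scaffold}. By \cref{corollary: weight formulas scaffold}, $\phi$ is birational of type $(1,2,2)$, and after rescaling all the weights by a common nonzero scalar we may assume they are as in item $1$ of that corollary. As in the proof of \cref{theorem: tensor scaffold}, this yields
$$
\langle \bs{\theta}_j , \mbf{f} \rangle = a_j \, b_j \, c_2 \quad (j = 0,1) \ , \qquad \langle \bs{\rho}_k , \mbf{f} \rangle = a_k \, b_2 \, c_k \quad (k = 0,1) \ ,
$$
with $a_i, b_j, c_k$ the linear forms of the statement; here $a_0, a_1$ are determined by the requirement that the line isomorphism $\bs{\sigma} = \bs{\sigma}(s_0,s_1)$ of \cref{lemma: linear syz s} specializes to $\bs{\pi}_0$ and $\bs{\pi}_1$ at the points $a_0 = 0$ and $a_1 = 0$.

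First I would settle claims $1$, $2$, and $3$ uniformly. The image of the surface $a_i = 0$ lies in $\Pi_i$, because $\langle \bs{\sigma} , \mbf{f} \rangle = 0$ and $\bs{\sigma}$ specializes to $\bs{\pi}_i$ there, and it also lies in $\Theta_i$, because $\langle \bs{\theta}_i , \mbf{f} \rangle$ vanishes on $a_i = 0$. For a general scaffold map the planes $\Pi_i = \text{Plane}(s, r_i)$ and $\Theta_i = \text{Plane}(y, r_i)$ are distinct, hence $\Pi_i \cap \Theta_i = r_i$, and $\phi$ contracts $a_i = 0$ to $r_i$; this is claim $1$. Likewise, both $\langle \bs{\theta}_0 , \mbf{f} \rangle$ and $\langle \bs{\theta}_1 , \mbf{f} \rangle$ vanish on $c_2 = 0$, so its image lies in $\Theta_0 \cap \Theta_1 = y$ (the two planes are distinct and both contain $y$), which is claim $3$; by the symmetric argument with $\bs{\rho}_0, \bs{\rho}_1$ in place of $\bs{\theta}_0, \bs{\theta}_1$, the image of $b_2 = 0$ lies in $P_0 \cap P_1 = z$, which is claim $2$.

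For claims $4$ and $5$ I would use the dominance of $\phi$, in the spirit of the last assertion of \cref{corollary: contractions pyramidal}. Since $\langle \bs{\theta}_j , \mbf{f} \rangle \in R_{(1,1,1)}$ is nonzero (as $\phi$ is dominant), the factorization above gives the divisor identity $\phi^{*}\Theta_j = (a_j = 0) + (b_j = 0) + (c_2 = 0)$ in $\PPP$. Let $q$ be a general point of $\Theta_j$; as $\phi$ is birational it has a unique preimage $p$, which lies off the base locus of $\phi$, and $\theta_j(\phi(p)) = 0$ forces $p$ to lie on one of $a_j = 0$, $b_j = 0$, $c_2 = 0$. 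But $q$ is general in the plane $\Theta_j$, whereas by claims $1$ and $3$ the images $\phi(\{a_j = 0\})$ and $\phi(\{c_2 = 0\})$ are contained in the lines $r_j$ and $y$, which are proper closed subsets of $\Theta_j$; hence $p$ lies on $b_j = 0$ and $q \in \phi(\{b_j = 0\})$. Therefore $\phi(\{b_j = 0\})$ contains a dense open subset of $\Theta_j$ and, being contained in $\Theta_j$, is dense in it. Claim $5$ is obtained in exactly the same way from $\phi^{*}P_k = (a_k = 0) + (b_2 = 0) + (c_k = 0)$ and claims $1$ and $2$, using $\phi(\{a_k = 0\}) \subseteq r_k$ and $\phi(\{b_2 = 0\}) \subseteq z$.

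The computations involved are routine; the only point that genuinely uses the standing hypotheses is the pairwise distinctness of $\Pi_i$, $\Theta_j$, $P_k$ — which makes their pairwise intersections equal to the lines $r_i$, $y$, $z$ — and the fact that a single linear syzygy $\bs{\sigma}$, hence a single pair $a_0, a_1$, underlies all of the factorizations, both of which follow from the uniqueness statement in \cref{lemma: linear syz s} together with \cref{property: nondegeneracy}. I do not anticipate any genuine obstacle beyond this bookkeeping.
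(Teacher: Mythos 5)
Your proof is correct, and it is close in spirit to the paper's, whose entire proof of this corollary is a pointer to the implication $1\Rightarrow 3$ in the proof of \cref{theorem: tensor scaffold}: there the factorizations $\langle\bs{\theta}_j,\mbf{f}\rangle\propto a_jb_jc_2$ and $\langle\bs{\rho}_k,\mbf{f}\rangle\propto a_kb_2c_k$, and the contractions of $a_i=0$, $b_2=0$, $c_2=0$, are established. The differences are worth recording. For claims $1$--$3$ the paper argues through incidences of the boundary lines with the planes (the four boundary $s$-lines meet $\Theta_0$ along $r_0$, the four boundary $u$-lines along $y$) and through the pullback of the ideal $(\tau_0,\tau_1)$, while you intersect pairs of auxiliary planes ($\Theta_0\cap\Theta_1=y$, $P_0\cap P_1=z$, $\Pi_i\cap\Theta_i=r_i$); this is arguably cleaner, and in fact you could dispense with the syzygy $\bs{\sigma}$ and the planes $\Pi_i$ entirely in claim $1$ by using $\Theta_i\cap P_i=r_i$, since $a_i$ is a common factor of $\langle\bs{\theta}_i,\mbf{f}\rangle$ and $\langle\bs{\rho}_i,\mbf{f}\rangle$. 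For claims $4$--$5$, which the paper asserts without spelling out an argument, your dominance argument is the right one. Two small points should be made explicit: (i) the distinctness of the pairs of planes you intersect rests on $s,y$ being skew (they lie in the same ruling of the smooth quadric $T_j$), on $r_0,r_1$ being skew, and on $y,z,r_i$ (resp.\ $s,y,r_i$) not being coplanar, all of which hold for a general scaffold map under \cref{property: nondegeneracy}, as you indicate; (ii) in the step ``a general point $q\in\Theta_j$ has a unique preimage'' you tacitly assume that a general point of $\Theta_j$ is reached by $\phi$ and lies in the locus where $\phi\circ\phi^{-1}=\mathrm{id}$, i.e.\ that $\phi^{-1}$ does not contract $\Theta_j$; this is true because, by the proof of \cref{type scaffold}, the surfaces of $\P^3$ not dominated by the isomorphism locus of $\phi$ are among $\Pi_0$, $\Pi_1$ and the quadric $Q$, none of which equals $\Theta_j$ or $P_k$ --- but it deserves a sentence, and one should cite \cref{type scaffold} rather than \cref{theorem: inverse scaffold}, whose proof in turn relies on this corollary. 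Neither point is a genuine gap relative to the paper's own level of detail.
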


\begin{proof}
Proof of $1$ implies $3$ in \cref{theorem: tensor scaffold}. 
\end{proof}

\subsection{Inverse and base locus}
\label{inverse: scaffold}

In analogy to \cref{inverse: hexahedral} and \cref{inverse: pyramidal}, we now derive the explicit formulas for the inverse of a scaffold birational map, and describe the base loci and blow-ups.
%Additionally, we derive the defining equations of the irreducible components of the base locus and describe the blow-ups.

\begin{theorem} 
\label{theorem: inverse scaffold}
Let $\phi$ be scaffold with weights as in \cref{corollary: weight formulas scaffold}. 
Then, $\phi^{-1}$ is given by 
$$
(x_0 : x_1 : x_2 : x_3) 
\mapsto 
\left(
\alpha_{1i} \, \lambda_{1i} \, \sigma_1
: 
\alpha_{0i} \, \lambda_{0i} \, \sigma_0
\right)
\times
\left( 
\beta_{12} \, \mu_1 \, \tau_1
: 
\beta_{02} \, \mu_0 \, \tau_0
\right)
\times 
\left(
\gamma_{12} \, \nu_1 \, \upsilon_1
: 
\gamma_{02} \, \nu_0 \, \upsilon_0
\right)
\ ,
$$
where any $i = 0,1$ is valid. 
Additionally, with the notation of \cref{corollary: contractions scaffold}, we have the following: 
\vskip2pt 
\begin{enumerate}
\item The base locus of $\phi$ is defined by the ideal  
\begin{equation}
\label{saturation scaffold}
\left(
a_0
, 
b_0
, 
c_0
\right) 
\cap 
\left(
a_1
, 
b_1
, 
c_1
\right) 
\cap 
\left( 
b_2 
, 
c_2
\right)
\end{equation}
\item The base locus of $\phi^{-1}$ is $s\cup r_0 \cup r_1 \cup y\cup z$
\item For each $i = 0,1$, $\phi$ blows up the base point $(a_i,b_i,c_i)$ to the plane $\Pi_i$
\item $\phi$ blows up the base line $(b_2,c_2)$ to the smooth quadric $Q$
\end{enumerate}
\end{theorem}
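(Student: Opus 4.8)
The plan is to follow the same template used in the proofs of \cref{theorem: inverse hexahedral} and \cref{theorem: inverse pyramidal}, adapting it to the scaffold situation. First I would invoke \cref{corollary: weight formulas scaffold} to know that $\phi$ is birational of type $(1,2,2)$, so that by \cref{trilinear birational} the inverse is given on each factor of $(\P^1)^3$ by a pencil of surfaces in $\P^3$: a pencil of planes on the first parameter and pencils of quadrics on the second and third. As in the pyramidal proof, I would realize these as line isomorphisms $\sigma:\P^1\to L_s\subset(\P^3)^\vee$ and $\tau,\upsilon:\P^1\to L_t,L_u\subset(\P^9)^\vee$, each determined by the images of three points: the two boundary surfaces, which map to $(1:0)$ and $(0:1)$ by definition, and a third point fixed by the weights. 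The contraction statements in \cref{corollary: contractions scaffold} supply that third point in each case: $\phi$ contracts $a_i=0$ to $r_i$, so $\sigma(-\alpha_{1i}:\alpha_{0i})=\pi_i$; $\phi$ contracts $b_2=0$ to $z$ and $c_2=0$ to $y$, and together with the fact from the notation block that $q=\mu_0\tau_0+\mu_1\tau_1=\nu_0\upsilon_0+\nu_1\upsilon_1$ is the smooth quadric $Q$ through $s,y,z$, this forces $\tau(-\beta_{12}:\beta_{02})=q$ and $\upsilon(-\gamma_{12}:\gamma_{02})=q$. Writing out the $2\times2$ determinant expression for each line isomorphism through its three prescribed values then yields exactly the stated formula for $\phi^{-1}$, and one checks the expression is independent of the choice of $i=0,1$ because the two planes $\Pi_0,\Pi_1$ lie in the same pencil of $s$-planes parametrized by $\sigma$.

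For the four remaining claims I would argue as in the earlier inverse theorems. Claim 2 is immediate from \cref{type scaffold}: the base locus of $\phi^{-1}$ is $T_0\cap T_1\cup Y_0\cap Y_1 = (s\cup r_0\cup r_1\cup y)\cup(s\cup r_0\cup r_1\cup z) = s\cup r_0\cup r_1\cup y\cup z$. For claim 1, set $I=(a_0,b_0,c_0)\cap(a_1,b_1,c_1)\cap(b_2,c_2)$; by \cref{corollary: contractions scaffold} each $f_n$ lies in $I_{(1,1,1)}$ (the three trilinear forms $\langle\bs\theta_0,\mbf f\rangle=a_0b_0c_2$, etc., together with the analogous factorizations and the syzygy relations show $f_0,f_1,f_2,f_3\in I$ in degree $(1,1,1)$), and since $\dim_\C I_{(1,1,1)}=4$ one concludes $\C\langle f_0,f_1,f_2,f_3\rangle = I_{(1,1,1)}$, so $I$ defines the base locus of $\phi$ — this mirrors exactly the argument in the proof of \cref{theorem: inverse pyramidal}. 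Claims 3 and 4 follow from the proof of \cref{type scaffold}: there the ideals $(a_0(A_0,A_1),b_0(B_0,B_1),c_0(C_0,C_1))$ and $(a_1(\dots),b_1(\dots),c_1(\dots))$ were shown to be principal generated by linear forms, i.e. the base points $(a_i,b_i,c_i)$ blow up to planes, which the contraction data identifies as $\Pi_i$; and the decompositions of $(B_0,B_1)$ and $(C_0,C_1)$ exhibited there show the base line $(b_2,c_2)=(F_0,F_1)\cap(G_0,G_1)$-type locus blows up to the smooth quadric $Q$ ruled by $s,y,z$.

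The main obstacle I anticipate is the bookkeeping in claim 1: verifying that the four trilinear forms $f_0,\dots,f_3$ genuinely lie in the intersection ideal $I_{(1,1,1)}$ and that this graded piece has dimension exactly $4$. Each of the three components of $I$ is a complete intersection of forms of multidegrees summing appropriately, and one must compute $\HF_{R/I}(1,1,1)$ — equivalently show the three ``layers'' impose independent conditions — to pin down $\dim_\C I_{(1,1,1)}$. This is a multigraded Hilbert function computation analogous to the one implicit in the pyramidal case, but with an extra primary component it is less transparent; I would either cite the relevant computation from \cite{trilinear} (where the saturation $(f_0,\dots,f_3):\mathfrak N^\infty$ was determined) or carry out the dimension count directly using the known multidegrees. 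Once that is in hand, the identification $\C\langle f_0,\dots,f_3\rangle=I_{(1,1,1)}$ is forced, and the rest of the proof is the routine determinant manipulation and the transcription of facts already established in \cref{type scaffold} and \cref{corollary: contractions scaffold}.
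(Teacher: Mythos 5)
Your proposal follows essentially the same route as the paper's proof: the inverse formula is obtained by viewing each factor of $\phi^{-1}$ as a line isomorphism pinned down by the contractions of \cref{corollary: contractions scaffold} (giving $\sigma(-\alpha_{1i}:\alpha_{0i})=\pi_i$ and $\tau(-\beta_{12}:\beta_{02})=\upsilon(-\gamma_{12}:\gamma_{02})=q$), claims 2--4 are read off from \cref{type scaffold}, and claim 1 is settled by identifying $I_{(1,1,1)}$ with $\R\langle f_0,f_1,f_2,f_3\rangle$ via the factorizations $\langle\bs{\theta}_l,\mbf{f}\rangle$, $\langle\bs{\rho}_l,\mbf{f}\rangle$. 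The dimension count you flag as the main obstacle is handled in the paper simply by exhibiting the explicit basis $a_0b_0c_2,\,a_1b_1c_2,\,a_0b_2c_0,\,a_1b_2c_1$ of $I_{(1,1,1)}$ and matching it with the four independent combinations of the $f_n$, so your concern is easily resolved and the argument is correct.
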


\begin{proof}
By \cref{corollary: weight formulas scaffold}, $\phi$ is birational of type $(1,2,2)$. 
As explained in the proof of \cref{theorem: inverse pyramidal}, the inverses on each factor of $\PPP$ can be regarded as line isomorphisms $\sigma: \P^1 \xrightarrow{} L_s\subset (\P^3)^\vee$ and $\tau,\upsilon:\P^1 \xrightarrow{} L_t,L_u\subset (\P^9)^\vee$, and are determined by the image of a point distinct from $(1:0)$ and $(0:1)$.   
%satisfying 
%$$
%\sigma: 
%(1:0) \mapsto \sigma_0 
%\ ,\ 
%(0:1) \mapsto \sigma_1 
%\ ,\ 
%\tau: 
%(1:0) \mapsto \tau_0 
%\ ,\ 
%(0:1) \mapsto \tau_1 
%\ ,\ 
%\upsilon:  
%(1:0) \mapsto \upsilon_0 
%\ ,\ 
%(0:1) \mapsto \upsilon_1 
%$$
%by definition. 
By \cref{corollary: contractions scaffold}, $\phi$ contracts $a_i = 0$ to $r_i\subset \Pi_i$ for each $i = 0,1$. 
Hence,
$
\sigma 
(-\alpha_{1i},\alpha_{0i}) 
=
\pi_i
$
yielding the inverse for the first parameter. 
%explicitly given by  
%$$
%(s_0:s_1)
%\mapsto 
%\begin{vmatrix}
%s_0 & s_1 \\ 
%\alpha_{1i} \, \lambda_{1i} \, \sigma_1 & \alpha_{0i} \, \lambda_{0i} \, \sigma_0
%\end{vmatrix}
%\ .
%$$
Similarly, by \cref{corollary: contractions scaffold} $\phi$ contracts $b_2 = 0$ (resp$.$ $c_2 = 0$) to $z\subset Q$ (resp$.$ $y\subset Q$). 
Since $Q\equiv q = 0$ is the unique $t$-surface (resp$.$ $u$-surface) containing the line $z$ (resp$.$ $y$) we find 
$
\tau
( 
-\beta_{12},\beta_{02} 
)
=
\upsilon 
(
-\gamma_{12},\gamma_{02}
)
=
q
$, 
and the inverse follows. 
%, $\tau,\upsilon$ are respectively defined by 
%$$
%(t_0:t_1)
%\mapsto 
%\begin{vmatrix}
%t_0 & t_1 \\ 
%\beta_{12} \, \mu_1 \, \tau_1 & \beta_{02} \, \mu_0 \, \tau_0
%\end{vmatrix}
%\ ,\ 
%(u_0:u_1)
%\mapsto 
%\begin{vmatrix}
%u_0 & u_1 \\ 
%\gamma_{12} \, \nu_1 \, \upsilon_1 & \gamma_{02} \, \nu_0 \, \upsilon_0
%\end{vmatrix}
%\ ,
%$$
%yielding the inverse in the statement. 

Regarding the four claims in the statement, the base locus of $\phi^{-1}$ and the blow-ups follow immediately from \cref{type scaffold}. 
Finally, writing $I = (a_0,b_0,c_0)\cap (a_1,b_1,c_1)\cap (b_2,c_2)$ we find 
$$
I_{(1,1,1)} 
= 
\R
\langle 
a_0 b_0 c_2
,
a_1 b_1 c_2
, 
a_0 b_2 c_0
,
a_1 b_2 c_1
\rangle 
= 
\R
\langle 
\langle 
\bs{\theta}_0
, 
\mbf{f}
\rangle
,
\langle 
\bs{\theta}_1
, 
\mbf{f}
\rangle
,
\langle 
\bs{\rho}_0
, 
\mbf{f}
\rangle
, 
\langle 
\bs{\rho}_1
, 
\mbf{f}
\rangle
\rangle 
= 
\R\langle f_0,f_1,f_2,f_3\rangle
\ ,
$$
and $I$ defines the base locus of $\phi$. 
\end{proof}

\section{Tripod birational maps}
\label{sec: tripod}

We conclude addressing the last, and geometrically most complex, family of birational maps: the class of tripod birational maps. 
%, illustrated in \cref{fig: constraints} bottom-right. 
%As for the former classes, this name is motivated by the geometry of trilinear birational maps of type $(2,2,2)$. 

\begin{definition}
\label{tripod}
A trilinear rational map is tripod if all the boundary surfaces are quadrics and: 
\begin{enumerate}
\item The four boundary $s$-lines (resp$.$ $t$-lines and $u$-lines) intersect a line $s$ (resp$.$ $t$ and $u$) 
\item The three lines $s,t,u$ intersect at a point $\mbf{A}$
\item All the boundary lines intersect a plane conic $C$ which intersects the three lines $s,t$, and $u$
\end{enumerate}
\end{definition}

\subsection{Geometric constraint} 

As in \cref{geometric constraint: pyramidal} and \cref{geometric constraint: scaffold}, we begin proving that a general birational map of type $(2,2,2)$ is tripod. 

\begin{proposition} 
\label{type tripod}
If $\phi$ is a general birational map of type $(2,2,2)$, then it is tripod. 
Moreover, 
$$
\Sigma_0 \cap \Sigma_1 = t \cup u \cup C 
\ ,\ 
T_0 \cap T_1 = s \cup u \cup C 
\ ,\ 
Y_0 \cap Y_1 = s \cup t \cup C 
\ .
$$
\end{proposition}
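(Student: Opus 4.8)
The plan is to follow the template of the proofs of \cref{type pyramidal} and \cref{type scaffold}, now carried out for a map whose inverse is quadratic in all three pairs of coordinates. Since $\phi$ is general, \cref{property: nondegeneracy} is in force, so the six boundary surfaces $\Sigma_i,T_j,Y_k$ are smooth, pairwise distinct quadrics and the twelve boundary lines are pairwise distinct lines. I would write $\phi^{-1}$ as $(x_0:x_1:x_2:x_3)\mapsto(A_0:A_1)\times(B_0:B_1)\times(C_0:C_1)$ with $A_i,B_j,C_k$ quadratic in $\mbf{x}$ and with no common factor within each pair, so that by \cref{trilinear birational} the pencils $(A_0,A_1)$, $(B_0,B_1)$, $(C_0,C_1)$ cut out $\Sigma_0\cap\Sigma_1$, $T_0\cap T_1$, $Y_0\cap Y_1$ respectively. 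The first ingredient is the primary decomposition of the saturated base ideal $(f_0,f_1,f_2,f_3):\mathfrak{N}^\infty$ of a general type-$(2,2,2)$ map supplied by \cite{trilinear} (the counterpart of \cite[Theorem 5.4]{trilinear} and \cite[Theorem 5.6]{trilinear}), whose irreducible components I would pull back explicitly along $\phi^{-1}$.

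Concretely, because $\phi\circ\phi^{-1}$ is the identity on a dense open set, one has
$$
\bigl(f_0(A_\bullet,B_\bullet,C_\bullet),\dots,f_3(A_\bullet,B_\bullet,C_\bullet)\bigr)=H\cdot(x_0,x_1,x_2,x_3)
$$
for a single quintic form $H=H(\mbf{x})$. Substituting $s_i\mapsto A_i$, $t_j\mapsto B_j$, $u_k\mapsto C_k$ into each primary component of $(f_0,f_1,f_2,f_3):\mathfrak{N}^\infty$ then forces those pulled-back ideals to be principal, generated by the corresponding factors of $H$; extracting the resulting divisibilities among the forms $a_i(A_\bullet)$, $b_j(B_\bullet)$, $c_k(C_\bullet)$ exactly as in the proof of \cref{type scaffold} will let me rewrite $(A_0,A_1)$, $(B_0,B_1)$, $(C_0,C_1)$ as explicit intersections of ideals. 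These intersections come out as unions of two lines and a residual conic, which yields the three displayed formulas and shows in particular that the conic $C$ is the component common to all three intersections.

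It then remains to match these pieces with the three requirements of \cref{tripod}. For the first requirement, each boundary $s$-line $s_{jk}$ lies on $T_j$ and on $Y_k$, and since two disjoint lines on a smooth quadric belong to the same ruling while three mutually skew lines determine the quadric, the incidence analysis used in the proof of \cref{type scaffold} forces the four $s$-lines to share a common transversal, which one identifies with the line $s$ appearing above; the same argument produces $t$ and $u$. For the second requirement, each of $s,t,u$ occurs in two of the three intersection formulas and meets the residual conic $C$, and the planarity of $C$ together with the pencil structure pins them down as three concurrent lines, meeting at a point $\mbf{A}$. For the third requirement, $C$ lies on all six boundary quadrics, so every boundary line --- being contained in two of these quadrics --- is secant to $C$, and $C$ meets $s,t,u$ because $t,u\subset\Sigma_0\cap\Sigma_1$, $s,u\subset T_0\cap T_1$, $s,t\subset Y_0\cap Y_1$ each also contain $C$.

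The main obstacle will be the geometric bookkeeping in these last steps. Unlike the scaffold case, where every residual component was a line sitting inside a ruling of a quadric, here the residual component is a plane conic shared by all six boundary quadrics, so one must check that the conic produced independently from the three pencil decompositions is literally the same curve, that all twelve boundary lines are genuinely secant to it, and that $s,t,u$ meet both it and one another in the claimed pattern. This is precisely where the generality hypothesis and \cref{property: nondegeneracy} will be used most heavily, to exclude the degenerate incidence configurations --- coincident or tangent lines, a degenerating conic, and the like --- in which these conclusions could fail.
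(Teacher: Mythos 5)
Your high-level strategy does coincide with the paper's: invoke the primary decomposition of the saturated base ideal of a general type-$(2,2,2)$ map from \cite{trilinear}, use that $\phi\circ\phi^{-1}$ is the identity up to a quintic factor $F$, and read off decompositions of the pencil base ideals $(A_0,A_1)$, $(B_0,B_1)$, $(C_0,C_1)$. But there is a genuine gap at the central step. For type $(2,2,2)$ the decomposition (\cite[Theorem 5.8]{trilinear}) is not structurally analogous to the pyramidal and scaffold ones: the second component is the non-reduced zero-dimensional ideal $(a_1^2,b_1^2,c_1^2,a_1b_1,a_1c_1,b_1c_1,\ a_1b_0c_0+a_0b_1c_0+a_0b_0c_1)$, a fat point with a special cubic generator. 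Its pullback along $\phi^{-1}$ is \emph{not} a principal ideal ``generated by the corresponding factor of $F$'', and the scaffold-style divisibility bookkeeping (``$b_i(B_\bullet)$ divisible by $a_i(A_\bullet)$'', etc.) that you propose to reuse does not apply to it. What the paper does instead, and what is missing from your plan, is a gcd/degree argument: write $F=G_0G_1$ with $G_0=\gcd(\tilde a_0,\tilde b_0,\tilde c_0)$ and $G_1$ the gcd of the pullbacks of the second component's generators; since $1\le\deg G_0\le 2$, one gets $\deg G_1\ge 3$, which forces $\tilde a_1,\tilde b_1,\tilde c_1$ to be proportional to a single quadric $q$ with that gcd equal to $q^2$; the case $\deg G_0=2$ is excluded by a contradiction (it would give $G_1=\tilde a_1$, of degree $2$), so $(\deg G_0,\deg G_1)=(1,4)$ and $\tilde a_0=\pi_1\pi$, $\tilde b_0=\pi_2\pi$, $\tilde c_0=\pi_3\pi$ share a common linear factor $\pi$; finally, divisibility of the pulled-back cubic generator by $q^2$ shows that $\omega_1\pi_2\pi_3+\omega_2\pi_1\pi_3+\omega_3\pi_1\pi_2$ is proportional to $q$, which is precisely what splits $(\pi_1,q)=(\pi_1,\pi_2)\cap(\pi_1,\pi_3)$ and yields the three displayed decompositions with $s=(\pi_2,\pi_3)$, $t=(\pi_1,\pi_3)$, $u=(\pi_1,\pi_2)$ and the plane conic $C=(\pi,q)$.

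Because this mechanism is absent, your proposal asserts rather than derives the key facts: that each pencil base locus is ``two lines plus a residual conic'', that the conic obtained from the three pencils is the \emph{same} curve, and that $s,t,u$ are concurrent. In the paper all of this is automatic from the algebra: $s,t,u$ are the pairwise intersections of the three planes $\pi_r=0$, hence meet at the point $\mbf{A}$ where $\pi_1=\pi_2=\pi_3=0$, and $C=(\pi,q)$ appears in all three base loci because $\pi$ and $q$ arise as common factors of all three pullbacks. The ruling/transversal incidence analysis you plan to import from the scaffold case cannot substitute for this, since it presupposes exactly the incidence structure (a single conic on all six boundary quadrics, concurrent axes) that has to be produced by the fat-point component of the base ideal. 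So the outline is right, but the decisive algebraic input --- the exact shape of the type-$(2,2,2)$ base ideal and the degree/gcd argument exploiting its non-reduced component --- is missing, and the scaffold-style substitution argument proposed in its place would fail.
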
 

\begin{proof}
Since $\phi$ is general  \cref{property: nondegeneracy} is satisfied, and by \cite[Theorem $5.8$]{trilinear} we have
\begin{equation}
\label{eq: (222) BI} 
(f_0,f_1,f_2,f_3) : \mathfrak{N}^\infty  = (a_0 , b_0 , c_0) \cap (a_1^2 , b_1^2 , c_1^2 , a_1 b_1 , a_1 c_1 , b_1 c_1 , a_1 b_0 c_0 + a_0 b_1 c_0 + a_0 b_0 c_1 )
\end{equation}
for some linear $a_i = a_i(s_0,s_1)$, $b_j = b_j(t_0,t_1)$, $c_k = c_k(u_0,u_1)$. 
With the notation of \cref{eq: inverse map} for $\phi^{-1}$, 
$$
(f_0(A_0,A_1,B_0,B_1,C_0,C_1) \, , \, \ldots \, , \, f_3(A_0,A_1,B_0,B_1,C_0,C_1)) 
= 
F
\cdot 
(x_0 , 
x_1 , 
x_2 , 
x_3 )
$$ 
for some quintic form  $F = F(\mbf{x})$. 
In particular, $F=0$ defines the projection to $\P^3$ of the exceptional divisor $E$ of the blow-up $\Gamma\subset (\P^1)^3\times\P^3$ of $(\P^1)^3$ along the base locus of $\phi$. 
By \cref{eq: (222) BI},  $E$ has two irreducible components. 

Let 
$\Tilde{a}_i = a_i(A_0,A_1)$, $\Tilde{b}_j = b_j(B_0,B_1)$, and $\Tilde{c}_k = c_k(C_0,C_1)$ 
for each $0\leq i,j,k\leq 1$. 
Then, the components of $E$ project to the surfaces in $\P^3$ defined by $G_0 = 0$ and $G_1 = 0$, where 
$$
G_0 = 
\gcd 
\left(
\Tilde{a}_0, \Tilde{b}_0 , \Tilde{c}_0
\right)
\ ,\ 
G_1 = 
\gcd 
\left(
{\Tilde{a}_1}^2, \Tilde{b}_1^2 , \Tilde{c}_1^2, 
\Tilde{a}_1 \Tilde{b}_1, 
\Tilde{a}_1 \Tilde{c}_1, 
\Tilde{b}_1 \Tilde{c}_1, 
\Tilde{a}_1 \Tilde{b}_0 \Tilde{c}_0 + 
\Tilde{a}_0 \Tilde{b}_1 \Tilde{c}_0 + 
\Tilde{a}_0 \Tilde{b}_0 \Tilde{c}_1
\right)
$$ 
and $F = G_0 G_1$. 
In particular, since 
$
1 \leq \deg G_0 \leq 2
$ 
it follows that $\deg G_1\geq 3$. 
Therefore, the degree of 
$
G' = 
\gcd
(
\Tilde{a}_1^2 , 
\Tilde{b}_1^2 , 
\Tilde{c}_1^2 , 
\Tilde{a}_1 \Tilde{b}_1 , 
\Tilde{a}_1 \Tilde{c}_1 , 
\Tilde{b}_1 \Tilde{c}_1 
)
$
is also at least three. 
It follows that $\deg G' = 4$, and $\Tilde{a}_1,\Tilde{b}_1,\Tilde{c}_1$ are proportional. 
On the other hand, if $G_0$ is quadratic then $\Tilde{a}_0,\Tilde{b}_0,\Tilde{c}_0$ are also proportional, implying 
$$
\Tilde{a}_1 \Tilde{b}_0 \Tilde{c}_0 + \Tilde{a}_0 \Tilde{b}_1 \Tilde{c}_0 + \Tilde{a}_0 \Tilde{b}_0 \Tilde{c}_1
= 
\lambda_1
\, 
\Tilde{a}_1 \Tilde{a}_0^2 
=
\lambda_2
\, 
\Tilde{b}_1 \Tilde{b}_0^2
= 
\lambda_3
\, 
\Tilde{c}_1 \Tilde{c}_0^2
$$
for some nonzero $\lambda_1,\lambda_2,\lambda_3$. 
Since $\gcd(A_0,A_1) = 1$ we find 
$
G_1 = \gcd(\Tilde{a}_1^2,\Tilde{a}_1 \Tilde{a}_0^2) 
= 
\Tilde{a}_1
$, yielding a contradiction. 
Hence, we must have $(\deg G_0,\deg G_1) = (1,4)$. 

Therefore, $\Tilde{a}_1,\Tilde{b}_1$, and $\Tilde{c}_1$ are proportional to a quadratic form $q = q(\mbf{x})$, we have $G_1 = q^2$, and moreover 
$
\Tilde{a}_0 = \pi_1 \pi
$, 
$
\Tilde{b}_0 = \pi_2 \pi 
$, 
$ 
\Tilde{c}_0 = \pi_3 \pi
$
for some linear forms $\pi = \pi(\mbf{x})$ and $\pi_r = \pi_r(\mbf{x})$ for $r = 1,2,3$.  
In particular, we can write 
$$
(A_0,A_1) 
= 
(\Tilde{a}_0,\Tilde{a}_1)
= 
(\pi_1 \pi, q)
= 
(\pi_1, q) \cap (\pi, q)
$$
and 
$
\Tilde{a}_1 \Tilde{b}_0 \Tilde{c}_0 + \Tilde{a}_0 \Tilde{b}_1 \Tilde{c}_0 + \Tilde{a}_0 \Tilde{b}_0 \Tilde{c}_1 
= 
\pi^2 
q
\, 
\left(
\omega_1 \, \pi_2 \pi_3 + 
\omega_2 \, \pi_1 \pi_3 + 
\omega_3 \, \pi_1 \pi_2
\right)
$
for some nonzero $\omega_1,\omega_2,\omega_3$. 
Since the polynomial above is divisible by $G_1 = q^2$, and $q$ and $\pi$ are coprime, it follows that 
$
\omega_1 \, \pi_2 \pi_3 + 
\omega_2 \, \pi_1 \pi_3 + 
\omega_3 \, \pi_1 \pi_2
$ 
is proportional to $q$. 
Hence,  
$$
(\pi_1,q)
= 
(\pi_1, \pi_2 \pi_3)
= 
(\pi_1, \pi_2) \cap (\pi_1, \pi_3)
\ . 
$$
With parallel arguments, we derive 
$$
(B_0,B_1) 
=
(\pi_2,\pi_1) 
\cap
(\pi_2,\pi_3) 
\cap
(\pi,q)
\ ,\ 
(C_0,C_1) 
=
(\pi_3,\pi_1) 
\cap
(\pi_3,\pi_2) 
\cap
(\pi,q)
\ \ .
$$
Let $(\pi_2,\pi_3), (\pi_1,\pi_3)$, and $(\pi_1,\pi_2)$ respectively define lines $s,t$, and $u$, and let $(\pi,q)$ define a plane conic $C$. 
By definition, $s\cap t\cap u = \mbf{A}$ and the four boundary $s$-lines (resp$.$ $t$- and $u$-lines) intersect $s$ (resp$.$ $t$ and $u$) since they lie in the base loci $T_0\cap T_1$ and $Y_0\cap Y_1$ (resp$.$ in $\Sigma_0\cap \Sigma_1$, $Y_0\cap Y_1$ and $\Sigma_0\cap \Sigma_1$, $T_0\cap T_1$). 
Additionally, $C$ intersects $s,t$, and $u$. 
Moreover, $C$ is intersected by all the boundary lines, since it lies in the base locus of all the parametric surfaces. 
Therefore, $\phi$ is tripod. 
\end{proof}

Let $\ell$ and $C$ be respectively a line and a plane conic in $\P^3$, such that $\ell\cap C = \mbf{Q}_1$ is a point. 
Additionally, let $\mbf{P}$ be a general point in $\P^3$. 
Then, the plane through $\mbf{P}$ and $\ell$ intersects $C$ at exactly two points $\mbf{Q}_1,\mbf{Q}_2$. 
In particular, $\ovl{\mbf{P}\mbf{Q}_2}$ is the unique line through $\mbf{P}$ that intersects $\ell$ and $C$ at distinct points. 
In practice, the control points of a tripod map can be generated as follows. 

\begin{construction}
\label{construction: tripod}
The control points of a tripod rational map can be generated as follows: 
\vskip2pt
\begin{enumerate}
\item Choose a point $\mbf{A}$ in $\P_\R^3$ 
\item Choose three distinct lines $s,t,u$ in $\P_\R^3$ through $\mbf{A}$ 
\item Choose a plane $\Pi\subset \P_\R^3$ not containing any of the lines $s,t,u$
\item Choose a plane conic $C\subset \Pi$ through $s\cap \Pi$, $t\cap \Pi$, and $u\cap \Pi$
\item Choose an affine control point $\mbf{P}_{000}$ not lying on $s,t,u$ or $\Pi$
\item Define $s_{00}$ (resp$.$ $t_{00},u_{00}$) as the line through $\mbf{P}_{000}$ that intersects $s$ (resp$.$ $t,u$) and $C$ at distinct points
%$C\backslash s$ (resp$.$ $C\backslash t$ and $C\backslash u$)
%\item Define $t_{00}$ as the line through $\mbf{P}_{000}$ that intersect $t$ and  $C\backslash t$  
%\item Define $u_{00}$ as the line through $\mbf{P}_{000}$ that intersects $u$ and $C\backslash u$
\item Choose affine control points $\mbf{P}_{100}$, $\mbf{P}_{010},\mbf{P}_{001}$, respectively on $s_{00},t_{00},u_{00}$, all distinct and not lying on $s,t,u$
\item Define $t_{10},u_{10}$ (resp$.$ $s_{10},u_{01}$ and $s_{01},t_{01}$) as in $6$ from the point $\mbf{P}_{100}$ (resp$.$ $\mbf{P}_{010}$ and $\mbf{P}_{001}$)
%\item Define $s_{10}$ (resp$.$ $u_{01}$) as in $6$ (resp$.$ $8$) from the point $\mbf{P}_{010}$
%\item Define $s_{01}$ (resp$.$ $t_{01}$) as in $6$ (resp$.$ 7) from the point $\mbf{P}_{001}$
\item Define $\mbf{P}_{011} = t_{01}\cap u_{01}$, $\mbf{P}_{101} = s_{01}\cap u_{10}$, and $\mbf{P}_{110} = s_{10}\cap t_{10}$
\item Define $s_{11}, t_{11}, u_{11}$ as in $6$ respectively from the points $\mbf{P}_{011},\mbf{P}_{101}, \mbf{P}_{110}$
\item Define $\mbf{P}_{111} = s_{11}\cap t_{11}\cap u_{11}$
\end{enumerate}
\end{construction}

In \cref{example: quadcube} we illustrate how  \cref{construction: tripod} can be used in practice with an example. 

\subsection{Birational weights and tensor rank criterion} 
\label{tensor criterion: tripod}

\begin{notation}
Let $\phi$ be tripod. 
We denote by $\Pi_1$ (resp$.$ $\Pi_2$ and $\Pi_3$) the plane through $t,u$ (resp$.$ $s,u$ and $s,t$), which is defined by 
$\pi_r = \bs{\pi}_r \cdot \mbf{x}^T
$ for $r=1$ (resp$.$ $r=2$ and $r=3$) and some $\bs{\pi}_r$ in $\R^4$. 
Additionally, for each $0\leq i,j,k\leq 1$, we define 
$$ 
\Delta_{ijk}(r) = 
\frac{1}{
\langle \bs{\pi}_r , \mbf{P}_{ijk} \rangle
}
\ .
$$ 
\end{notation}

\begin{remark}
For tripod maps, $\Delta_{ijk}(r)$ is  well-defined for every $0\leq i,j,k\leq 1$ and $r = 1,2,3$. 
Specifically, 
$\Delta_{ijk}(r)$ is not well-defined if and only if $\mbf{P}_{ijk}$ lies on the plane $\Pi_r$. 
Without loss of generality, let $r = 1$. 
Then, the boundary line $t_{ik}$ (resp$.$ $u_{ij}$) also lies in $\Pi_1$, since it intersects $t$ (resp$.$ $u$) and $\Pi_1 = \text{Plane}(t,u)$. 
Similarly, the control point $\mbf{P}_{ij^*k}$ (resp$.$ $\mbf{P}_{ijk^*}$) lies in $\Pi_1$, and the line $u_{ij^*}$ (resp$.$ $t_{ik^*}$) as well for the same reasons as before. 
Therefore, the four control points $\mbf{P}_{i00}, \mbf{P}_{i10}, \mbf{P}_{i01}$, and $\mbf{P}_{i11}$ are coplanar, yielding a contradiction with the fact that $\Sigma_i$ is a smooth quadric.
\end{remark}

%As in \cref{tensor criterion: hexahedral}, \cref{tensor criterion: pyramidal}, and \cref{tensor criterion: scaffold}, the birationality of a tripod map is equivalent to a rank-one condition on a $2\times 2\times 2$ tensor. 

\begin{theorem} 
\label{theorem: tensor tripod}
Let $\phi$ be tripod. The following are equivalent:
\vskip2pt
\begin{enumerate}
\item $\phi$ is birational of type $(2,2,2)$
\item One of the three tensors 
$$
W(r) = \left( \frac{w_{ijk}}{\Delta_{ijk}(r)} \right)_{0\leq i,j,k \leq 1} 
$$
for each $r = 1,2,3$, has rank one 
\item The three tensors above have rank one 
\end{enumerate}
\end{theorem}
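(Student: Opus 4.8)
The plan is to follow the strategy of the proof of \cref{theorem: tensor scaffold}, adapted to the fact that for a tripod map every boundary surface is a quadric, so $\mathbf{f}$ admits no linear syzygy and all three coordinates of $\phi^{-1}$ are quadratic. Concretely, I would prove $2\Rightarrow 1$ and $1\Rightarrow 3$, the implication $3\Rightarrow 2$ being trivial. The only new ingredient, compared with the pyramidal and scaffold cases, is that the conic $C$ lies simultaneously in $\Sigma_0\cap\Sigma_1$, $T_0\cap T_1$, and $Y_0\cap Y_1$, so the three planes $\Pi_1,\Pi_2,\Pi_3$ interact through the common plane $\Pi$ supporting $C$; this reducible quadric $\Pi_r\cup\Pi$ belongs to the pencil of parametric $r$-surfaces for each $r$, and it is the geometric engine of the argument.

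For $2\Rightarrow 1$, assume without loss of generality that $W(1)$ has rank one, say $W(1)=(\alpha_0,\alpha_1)\otimes(\beta_0,\beta_1)\otimes(\gamma_0,\gamma_1)$. Expanding $\langle\bs{\pi}_1,\mathbf{f}\rangle$ in the monomial basis and using $w_{ijk}/\Delta_{ijk}(1)=\alpha_i\beta_j\gamma_k$ gives the factorization
\[
\langle\bs{\pi}_1,\mathbf{f}\rangle
=\sum_{0\le i,j,k\le 1}\frac{w_{ijk}}{\Delta_{ijk}(1)}\,s_it_ju_k
= a_1(s_0,s_1)\,b_1(t_0,t_1)\,c_1(u_0,u_1),
\]
with $a_1,b_1,c_1$ the linear forms read off the rank-one factors, in analogy with \cref{eq: a0 scaffold tensor}. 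Hence $\phi$ sends each of the three parametric surfaces $\{a_1=0\}$, $\{b_1=0\}$, $\{c_1=0\}$ into the plane $\Pi_1=\mathrm{Plane}(t,u)$. As in \cref{corollary: contractions pyramidal} and \cref{corollary: contractions scaffold}, I would then identify these images by tracking the boundary lines: since $\Pi_1$ is a component of the reducible member $\Pi_1\cup\Pi$ of the pencil of $s$-surfaces, two of the three surfaces collapse onto lines among $s,t,u$ and one dominates $\Pi_1$, and the same analysis applied to the other pencils pins down all the contractions of $\phi$ (onto $s$, $t$, $u$, and $C$). These contractions determine the inverse on each factor of $\PPP$ as a line isomorphism $\P^1\xrightarrow{}(\P^9)^{\vee}$ sending $(1:0)$ and $(0:1)$ to the two boundary quadrics and a third, explicit point to a reducible quadric $\Pi_r\cup\Pi$; the associated pencils of quadrics then produce syzygies of $\mathbf{f}$ of degrees $(1,1,0)$, $(1,0,1)$, and $(0,1,1)$. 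Since all boundary surfaces of a tripod map are smooth quadrics, $\mathbf{f}$ has no linear syzygy (exactly as in the proofs of \cref{theorem: tensor pyramidal} and \cref{theorem: tensor scaffold}), so by \cite[Theorem~6.1]{trilinear} $\phi$ is birational of type $(2,2,2)$.

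For $1\Rightarrow 3$, suppose $\phi$ is birational of type $(2,2,2)$. By \cref{type tripod} the base locus of $\phi^{-1}$ is $\Sigma_0\cap\Sigma_1=t\cup u\cup C$ together with the analogous unions $T_0\cap T_1=s\cup u\cup C$ and $Y_0\cap Y_1=s\cup t\cup C$, and the base ideal of $\phi$ is the one displayed in \cref{eq: (222) BI}, whose associated contractions and blow-ups are completely determined. Reading off the factorizations of $\sigma_i(\mathbf{f})$, $\tau_j(\mathbf{f})$, $\upsilon_k(\mathbf{f})$ then shows that, for each $r=1,2,3$, the trilinear form $\langle\bs{\pi}_r,\mathbf{f}\rangle$ is divisible by one linear form in each of the three variable pairs $(s_0,s_1)$, $(t_0,t_1)$, $(u_0,u_1)$: this is forced because $\Pi_r$ is a component of the reducible member $\Pi_r\cup\Pi$ of the corresponding pencil of parametric surfaces and $\phi$ contracts the three relevant surfaces into $\Pi_r$. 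Equivalently, $W(r)=\big(w_{ijk}\langle\bs{\pi}_r,\mathbf{P}_{ijk}\rangle\big)_{0\le i,j,k\le 1}$ has rank one for every $r$, which is (3).

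The step I expect to be the main obstacle is the bookkeeping in $2\Rightarrow 1$: because $\Pi_1,\Pi_2,\Pi_3$ all pass through the configuration $\{\mathbf{A}\}$ and share the plane $\Pi$, one must carefully decide which of $\{a_1=0\},\{b_1=0\},\{c_1=0\}$ contracts onto $t$, which onto $u$, and which dominates $\Pi_1$ (and likewise for the other two tensors), and then verify that the three reconstructed maps $\P^1\xrightarrow{}(\P^9)^{\vee}$ are genuine isomorphisms rather than constant maps—this is where \cref{property: nondegeneracy} and the nonvanishing of the $\Delta_{ijk}(r)$ enter. Once the contractions are pinned down, the extraction of the three bilinear syzygies and the final appeal to \cite[Theorem~6.1]{trilinear} go through essentially as in \cref{theorem: tensor scaffold}.
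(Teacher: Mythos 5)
There is a genuine gap in your hard direction. From the single hypothesis that $W(1)$ has rank one you correctly obtain $\langle\bs{\pi}_1,\mbf{f}\rangle=a\,b\,c$ and hence that two of the three surfaces are contracted onto $t$ and $u$ while the third dominates $\Pi_1$ (this bookkeeping is the easy part, since all boundary $t$- and $u$-lines meet $\Pi_1$ along $t$ and $u$). But your next sentence, ``the same analysis applied to the other pencils pins down all the contractions of $\phi$ (onto $s$, $t$, $u$, and $C$)'', assumes exactly what is not available: to run the same analysis for $\Pi_2$ and $\Pi_3$ you would need $W(2)$ and $W(3)$ to have rank one, i.e. the factorizations of $\langle\bs{\pi}_2,\mbf{f}\rangle$ and $\langle\bs{\pi}_3,\mbf{f}\rangle$, which is precisely the content of the implication $2\Rightarrow 3$ and does not follow formally from $W(1)$ alone ($\Pi_1$ does not even contain $s$, so rank one of $W(1)$ gives no direct information about a contraction onto $s$). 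The paper closes this gap with a propagation argument that is absent from your proposal: restrict $\phi$ to the four boundary $s$-lines via $\mbf{P}_{jk}(s_0,s_1)$, introduce the pencils of planes through $t$ and through $u$ (the line isomorphisms $\theta_k,\rho_j$), and show that the parameter value at which $\mbf{P}_{00}$ hits $s_{00}\cap s$ forces the same value for $\mbf{P}_{10},\mbf{P}_{01}$ and then $\mbf{P}_{11}$; only then does one get a surface $a_1=0$ contracted onto $s$ and the remaining two factorizations. Your ``main obstacle'' paragraph identifies a different and much milder issue (which factor contracts where), not this one.

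A second, smaller concern is your endgame. Having all contractions, you propose to produce syzygies of degrees $(1,1,0)$, $(1,0,1)$, $(0,1,1)$, observe there are no linear syzygies, and invoke \cite[Theorem~6.1]{trilinear} to conclude birationality of type $(2,2,2)$. The paper does not argue this way: for the tripod case it constructs $\phi^{-1}$ explicitly, by showing that the line spanned by $a_0h_0$ and $a_1h_1$ in the space of $(2,1,1)$-forms lies entirely in the (quadratically cut) locus of forms with a $(1,0,0)$ factor because it contains four such points, and then writing $(A_0:A_1)=(\alpha_{11}\,\pi_1\pi-\alpha_{10}\,q:-\alpha_{01}\,\pi_1\pi+\alpha_{00}\,q)$. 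Everywhere else in the paper, \cite[Theorem~6.1]{trilinear} is applied through the existence/non-existence pattern of \emph{linear} syzygies (plus one auxiliary syzygy in the scaffold case where a linear inverse component exists); for type $(2,2,2)$ there is no linear component, and the relations governing the quadratic components live in the Rees ideal rather than in the syzygy module, so your citation would at minimum need a precise statement of that theorem covering this case. As written, both the propagation step and the final appeal are unsubstantiated, and they are the two places where the paper's proof does real work.
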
 

\begin{proof}
First, we prove that 1 implies both 2 and 3. 
For each $r=1,2,3$, the tensor $W(r)$ encodes the coefficients of the trilinear polynomial 
$
\langle \bs{\pi}_r , \mbf{f}\rangle
$,  
%= 
%\sum_{0\leq i,j,k\leq 1}
%w_{ijk} \, \langle \bs{\pi}_r , \mbf{P}_{ijk}\rangle 
%\, 
%s_i t_j u_k 
%= 
%\sum_{0\leq i,j,k\leq 1}
%\left(
%\frac{w_{ijk}}{\Delta_{ijk}(r)} 
%\right)
%\, 
%s_i t_j u_k , 
which defines the pullback by $\phi$ of $\Pi_r$. 
By \cref{type tripod}, the lines $s,t,u$ are irreducible components of the base locus of $\phi^{-1}$. 
Hence, $\phi$ contracts some surfaces in $(\P^1)^3$ to each of them. 
Since each plane $\Pi_1,\Pi_2,\Pi_3$ contains two of these lines, $\langle \bs{\pi}_r , \mbf{f} \rangle$ admits two factors that must be linear. Equivalently, $W(r)$ has rank one. 

We continue proving that $2$ implies $3$. 
Without loss of generality, we assume that
$
W(1) = 
(\alpha_{00},\alpha_{10}) \otimes (\beta_{01},\beta_{11}) \otimes (\gamma_{01},\gamma_{11})
$, yielding the factorization 
$
\langle
\bs{\pi}_1
, 
\mbf{f} 
\rangle
= 
a_0 b_1 c_1
$
%\begin{equation}
%\label{eq: factorization 1 tripod}
%\langle
%\bs{\pi}_1
%, 
%\mbf{f} 
%\rangle
%= 
%\sum_{0\leq i,j,k \leq 1}
%\left(
%\frac{w_{ijk}}{\Delta_{ijk}(1)}
%\right)
%s_i t_j u_k
%= 
%\sum_{0\leq i,j,k\leq 1}
%(\alpha_{i0} s_i) (\beta_{j1} t_j) (\gamma_{k1} u_k)
%= 
%a_0 b_1 c_1
%\ ,
%\end{equation}
where 
\begin{equation}
\label{eq: a0 tripod tensor}
a_0(s_0,s_1) = 
\begin{vmatrix}
\phantom{-}s_0 & s_1 \\
-\alpha_{10} & \alpha_{00}
\end{vmatrix}
\ \ ,\ \ 
b_1(t_0,t_1) = 
\begin{vmatrix}
\phantom{-}t_0 & t_1\\ 
-\beta_{11} & \beta_{01}
\end{vmatrix}
\ \ ,\ \ 
c_1(u_0,u_1) = 
\begin{vmatrix}
\phantom{-}u_0 & u_1\\
-\gamma_{11} & \gamma_{01}
\end{vmatrix}
\ .
\end{equation} 
Therefore, the images of $a_0 = 0$, $b_1 = 0$, and $c_1 = 0$ lie in $\Pi_1$. 
We make the following observations:
\vskip2pt
\begin{enumerate}
\item Since $\phi$ is tripod, the four boundary $t$-lines (resp$.$ $u$-lines) meet $\Pi_1$ at the line $t$ (resp$.$ $u$). 
Hence, $\phi$ contracts $b_1 = 0$ (resp$.$ $c_1 = 0$) to $t$ (resp$.$ $u$) 
\item The image of $a_0 = 0$ is dense in $\Pi_1$
\end{enumerate}
\vskip2pt 
\noindent Now, for each $0\leq i,j,k\leq 1$ consider the restrictions $\phi_{jk}:\P^1\xrightarrow{}s_{jk}\subset \P^3$ defined by 
$
\mbf{P}_{jk}(s_0,s_1) 
= 
w_{0jk} \, \mbf{P}_{0jk} \, s_0 
+ 
w_{1jk} \, \mbf{P}_{1jk} \, s_1
$,  
as well as the planes $T_{ik}\equiv \tau_{ik} = 0$ (resp$.$ $Y_{ij} \equiv \upsilon_{ij} = 0$) through $t,t_{ik}$ (resp$.$ $u,u_{ij}$), 
%$$
%T_{ik} = \text{Plane}(t,t_{ik}) \equiv \tau_{ik} = 0 
%\ ,\
%Y_{ij} = \text{Plane}(u,u_{ij}) \equiv \upsilon_{ij}  = 0 
%$$ 
for some linear forms $\tau_{ik} = \tau_{ik}(\mbf{x})$ and $\upsilon_{ij} = \upsilon_{ij}(\mbf{x})$. 
% in $\R[x_0,x_1,x_2,x_3]$.  
By the second observation above, $\phi_{jk}$ is the unique line isomorphism sending 
$
(1:0) \mapsto \mbf{P}_{0jk}
$, 
$
(0:1) \mapsto \mbf{P}_{1jk}
$, 
$
(-\alpha_{10}:\alpha_{00}) \mapsto s_{jk}\cap \Pi_1
$.
Additionally, define the line isomorphisms
\begin{eqnarray*}
\theta_k: (s_0:s_1)\in\P^1 \xrightarrow{} (\P^3)^\vee
\ :\ &
(1:0) \mapsto \tau_{0k}
\ ,\ 
(0:1) \mapsto \tau_{1k} 
\ ,\ 
(-\alpha_{10}:\alpha_{00}) \mapsto \pi_1
\ ,
\\
\rho_j: (s_0:s_1)\in\P^1 \xrightarrow{} (\P^3)^\vee
\ :\ &
(1:0) \mapsto \upsilon_{0j}
\ ,\ 
(0:1) \mapsto \upsilon_{1j} 
\ ,\ 
(-\alpha_{10}:\alpha_{00}) \mapsto \pi_1 
\ .
\end{eqnarray*}
In particular, $\theta_k$ and $\rho_j$ parametrize the pencils of planes respectively defined by the lines $t$ and $u$. 
Let $\bs{\theta}_k = \bs{\theta}_k(s_0,s_1)$ and $\bs{\rho}_j = \bs{\rho}_j(s_0,s_1)$ be  tuples in $(\R[s_0,s_1]_1)^4$ of linear polynomials respectively defining these two  parametrizations. 
Hence,   
$
\langle
\bs{\theta}_k
,
\mbf{P}_{jk}
\rangle 
= 
\langle
\bs{\rho}_j
,
\mbf{P}_{jk}
\rangle 
= 
0 
$  
%More explicitly, the scalar products above are either identically zero or quadratic homogeneous polynomials in $s_0,s_1$ vanishing at the three points 
%$(1:0),(0:1)$ and $(-\alpha_{10}:\alpha_{00})$, hence identically zero. 
yielding independent syzygies of $\Z^3$-degree $(1,0,0)$ of $\mbf{P}_{jk}$. 
In particular, since $\Pi_2 = \text{Plane}(s,u)$ and $\Pi_3=\text{Plane}(s,t)$, the following conditions are equivalent:
\vskip2pt
\begin{enumerate}
\item $\mbf{P}_{jk}(s_0,s_1) = s_{jk} \cap s = s_{jk} \cap \Pi_2 = s_{jk} \cap \Pi_3$
\item $\bs{\theta}_k(s_0,s_1) = \bs{\pi}_3$ 
\item $\bs{\rho}_j(s_0,s_1) = \bs{\pi}_2 $
\end{enumerate}
\vskip4pt
\noindent  
Now, let $p = (-\alpha_{11}:\alpha_{01})\in\P^1$ 
%\begin{equation*}
%a_1(s_0,s_1) = 
%\begin{vmatrix}
%s_0 & s_1 \\ 
%-\alpha_{11} & \alpha_{01}
%\end{vmatrix}
%\end{equation*}
%$
%a_1(s_0,s_1) = 
%\alpha_{01} s_0 + \alpha_{11} s_1 
%$
such that $\mbf{P}_{00}(p) = s_{00} \cap s$. 
By the equivalences above, we find $\theta_0(p) = \pi_3$ and $\rho_0(p) = \pi_2$, 
which readily imply  
$
\mbf{P}_{10}(p) = s_{10} \cap s 
$ and 
$
\mbf{P}_{01}(p) = s_{01} \cap s
$. 
Again, these imply  
$\theta_1(p) = \pi_3$ and $\rho_1(p) = \pi_2$, hence
$\mbf{P}_{11}(p) = s_{11} \cap s$. 
Therefore, $\phi$ contracts the surface defined by $a_1(s_0,s_1) = \alpha_{01} s_0 + \alpha_{11} s_1$ to $s$.  
In particular, we can write    
\begin{equation}
\label{eq: factorizations 23 tripod}
\langle
\bs{\pi}_2 , \mbf{f} 
\rangle
%= 
%\sum_{0\leq i,j,k \leq 1}
%\left(
%\frac{w_{ijk}}{\Delta_{ijk}(2)}
%\right)
%s_i t_j u_k
= 
a_1 b_0 c_1 
\ ,\ 
\langle
\bs{\pi}_3 , \mbf{f} 
\rangle
%= 
%\sum_{0\leq i,j,k \leq 1}
%\left(
%\frac{w_{ijk}}{\Delta_{ijk}(3)}
%\right)
%s_i t_j u_k
= 
a_1 b_1 c_0 
\ ,
\end{equation}
for some linear $b_0 = b_0(t_0,t_1),c_0=c_0(u_0,u_1)$, 
%\begin{equation}
%\label{eq: parameters to planes tripod}
%b_0(t_0,t_1) = 
%\begin{vmatrix}
%t_0 & t_1 \\ 
%-\beta_{10} & \beta_{00}
%\end{vmatrix}
%\ \ ,\ \ 
%c_0(u_0,u_1) = 
%\begin{vmatrix}
%u_0 & u_1 \\ 
%-\gamma_{10} & \gamma_{00}
%\end{vmatrix}
%\ \ ,
%\end{equation}
and 3 follows. 

We conclude proving that 3 implies 1. 
Since $\phi$ is tripod, the cone $Q$ and the rank-two quadric $\Pi_1\cup\Pi$ (resp$.$ $\Pi_2\cup\Pi$ and $\Pi_3\cup\Pi$) lie in the pencil of surfaces spanned by $\Sigma_0,\Sigma_1$ (resp$.$ $T_0,T_1$ and $Y_0,Y_1$). 
%we have the intersections 
%$$
%\Sigma_0\cap \Sigma_1 = t\cup u\cup C
%\ ,\ 
%T_0\cap T_1 = s\cup u\cup C
%\ ,\ 
%Y_0\cap Y_1 = s\cup t\cup C
%\ .
%$$
%In particular, 
%the cone $Q$ of apex $\mbf{A} = s\cap t\cap u$ through the plane conic $C$ is a linear combination of the two boundary surfaces for each of the three parameters. 
%Similarly, the rank-two quadrics
%$$
%\Pi_1 \cup \Pi
%\ ,\ 
%\Pi_2 \cup \Pi
%\ ,\ 
%\Pi_3 \cup \Pi
%$$
%are also linear combinations of the boundary surfaces respectively for the first, second, and third parameters.
%Thus, we can write 
%\begin{equation}
%\label{eq: (222) lines of quadrics}
%\begin{pmatrix}
%\phantom{-}\lambda_{11} & -\lambda_{01} \\ 
%-\lambda_{10} & \phantom{-}\lambda_{00}
%\end{pmatrix}
%\begin{pmatrix}
%\pi_1 \pi \\ 
%q 
%\end{pmatrix}
%= 
%\begin{pmatrix}
%\sigma_0 \\ 
%\sigma_1 
%\end{pmatrix}
%\ ,\ 
%\begin{pmatrix}
%\phantom{-}\mu_{11} & -\mu_{01} \\ 
%-\mu_{10} & \phantom{-}\mu_{00}
%\end{pmatrix}
%\begin{pmatrix}
%\pi_2 \pi \\ 
%q 
%\end{pmatrix}
%= 
%\begin{pmatrix}
%\tau_0 \\ 
%\tau_1 
%\end{pmatrix}
%\ ,\ 
%\begin{pmatrix}
%\phantom{-}\nu_{11} & -\nu_{01} \\ 
%-\nu_{10} & \phantom{-}\nu_{00}
%\end{pmatrix}
%\begin{pmatrix}
%\pi_3 \pi \\ 
%q 
%\end{pmatrix}
%= 
%\begin{pmatrix}
%\upsilon_0 \\ 
%\upsilon_1 
%\end{pmatrix}
%\end{equation}
%for some $\lambda_{ij},\mu_{ij},\nu_{ij}$ yielding invertible matrices. 
%Additionally, by \cref{property: nondegeneracy} all these coefficients are nonzero since the boundary surfaces are smooth. 
In particular, the image of the line isomorphism 
$\varphi_1: \P^1\xrightarrow{}\left(\P^9\right)^\vee \simeq \P(\C[x_0,x_1,x_2,x_3]_2)$ defined by
$$
(\zeta_0:\zeta_1) \mapsto 
\begin{vmatrix}
\zeta_0 & \zeta_1 \\ 
\pi_1 \pi & q
\end{vmatrix}
$$
is the line $L_s$ spanned by $\sigma_0,\sigma_1$. 
By the same arguments as before, the rank-one conditions readily imply the contraction of $a_1 = 0$ (resp$.$ $b_1 = 0$ and $c_1=0$) to $s$ (resp$.$ $t$ and $u$),  and the image of $a_0 = 0$ being dense on $\Pi_1$. 
Hence, we find the factorizations 
$
\pi_1 (f_0,f_1,f_2,f_3)
= 
\langle
\bs{\pi}_1
, 
\mbf{f}
\rangle
= 
a_0 b_1 c_1
$ and 
$
q(f_0,f_1,f_2,f_3)
= 
a_1 b_1 c_1 h_1
$
for some trilinear $h_1 = h_1(s_0,s_1,t_0,t_1,u_0,u_1)$.
On the other hand, $\varphi_1$ induces the specialization 
% a second line isomorphim $\varphi_1'$ 
%from $\P^1$ to a line $L_s'$ in the space of polynomials of $\Z^3$-degree $(2,2,2)$, namely 
$$
\varphi_1':
(\zeta_0:\zeta_1) \mapsto 
\varphi_1(\zeta_0:\zeta_1)(\mbf{f})
=
\begin{vmatrix}
\zeta_0 & \zeta_1 \\ 
a_0 b_1 c_1 h_0 & a_1 b_1 c_1 h_1
\end{vmatrix}
= 
 b_1 c_1
\begin{vmatrix}
\zeta_0 & \zeta_1 \\ 
a_0 h_0 & a_1 h_1
\end{vmatrix}
$$
where $h_0 = \pi(f_0,f_1,f_2,f_3) = \langle \bs{\pi} , \mbf{f} \rangle$. 
In particular, $L_1$ can be realized in the projective space of homogeneous polynomials of $\Z^3$-degree $(2,1,1)$, namely as the line spanned by $a_0h_0$ and $a_1h_1$. 
In this space, the locus of polynomials admitting a factor of $\Z^3$-degree $(1,0,0)$ 
%$$
%g_0(s_0,s_1) \, g_1(s_0,s_1,t_0,t_1,u_0,u_1)
%\ ,
%$$
%with $g_0,g_1$ respectively of $\Z^3$-degrees $(1,0,0)$ and $(1,1,1)$, 
is cut out by quadratic equations. 
Therefore, $L_1$ is either contained in this locus or intersects it in at most two points. 
However, by definition $L_1$ contains the four distinct polynomials 
%admitting distinct factors of degree $(1,0,0)$ given by
$$
a_0 h_0 
\ ,\ 
a_1 h_1
\ ,\ 
b_1^{-1} c_1^{-1} \sigma_0(f_0,f_1,f_2,f_3) 
= 
s_0  h_0' 
\ ,\ 
b_1^{-1} c_1^{-1} \sigma_1(f_0,f_1,f_2,f_3) 
= 
s_1  h_1' 
\ , 
$$
for some trilinear $h_i' = h_i'(s_0,s_1,t_0,t_1,u_0,u_1)$, all admitting a factor of degree $(1,0,0)$. 
Hence, $L_1$ is the line of polynomials of the form $a(s_0,s_1)\cdot h$, where $h$ is proportional to $h_0$ and $h_1$.  
%all the points in $L_1'$ factor as above, and $\varphi_1': \P^1 \xrightarrow{} L_1'$ can be written as 
%$$
%(\zeta_0:\zeta_1) 
%\mapsto 
%\begin{vmatrix}
%\zeta_0 & \zeta_1 \\ 
%a_0 h & a_1 h
%\end{vmatrix}
%= 
%h
%\begin{vmatrix}
%\zeta_0 & \zeta_1 \\ 
%a_0 & a_1
%\end{vmatrix}
%$$
%for some trilinear $h = h(s_0,s_1,t_0,t_1,u_0,u_1)$. 
In particular, the rational map $\phi_1: \P^3 \dashrightarrow \P^1$ defined by 
$$
(x_0:x_1:x_2:x_3)
\mapsto 
(A_0:A_1)
=
(
\alpha_{11} \, \pi_1 \pi - \alpha_{10} \, q
:
-\alpha_{01} \, \pi_1 \pi + \alpha_{00} \, q 
)
$$
yields the inverse of $\phi$ on the first $\P^1$ factor of $(\P^1)^3$ since 
$
(A_0,A_1)
\xmapsto{x_i\mapsto f_i}
b_1 
c_1
h 
\, 
(s_0,s_1)
$.
With parallel arguments, we derive the inverse for the  remaining two factors of $(\P^1)^3$. 
%Hence, $\phi$ is birational of type $(2,2,2)$. 
\end{proof}

\begin{corollary}
\label{corollary: weight formulas tripod}
Let $\phi$ be tripod. 
Then, $\phi$ is birational of type $(2,2,2)$ if and only if one, and therefore all up to a nonzero scalar multiplying all the weights, of the following conditions holds:
\vskip2pt 
\begin{enumerate}
\item $w_{ijk} = \alpha_{i0} \, \beta_{j1} \, \gamma_{k1} \, \Delta_{ijk}(1)$ for each $0\leq i,j,k\leq 1$
\item $w_{ijk} = \alpha_{i1} \, \beta_{j0} \, \gamma_{k1} \, \Delta_{ijk}(2)$ for each $0\leq i,j,k\leq 1$
\item $w_{ijk} = \alpha_{i1} \, \beta_{j1} \, \gamma_{k0} \, \Delta_{ijk}(3)$ for each $0\leq i,j,k\leq 1$
\end{enumerate}
\vskip2pt 
\noindent for some $(\alpha_{0i}:\alpha_{1i})$, $(\gamma_{0j}:\beta_{1j})$, $(\gamma_{0k}:\gamma_{1k})$ in $\P_\R^1$.
\end{corollary}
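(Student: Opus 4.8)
The plan is to read the statement off \cref{theorem: tensor tripod} together with the factorizations of $\langle\bs{\pi}_r,\mbf{f}\rangle$ established inside its proof. By \cref{theorem: tensor tripod}, $\phi$ is birational of type $(2,2,2)$ if and only if one — equivalently all three — of the $2\times 2\times 2$ tensors $W(1),W(2),W(3)$ has rank one, where $W(r) = (w_{ijk}/\Delta_{ijk}(r))$ is the coefficient tensor (in the monomial basis) of the trilinear form $\langle\bs{\pi}_r,\mbf{f}\rangle$; recall that the scalars $\Delta_{ijk}(r) = \langle\bs{\pi}_r,\mbf{P}_{ijk}\rangle^{-1}$ are well-defined and nonzero for a tripod map by the Remark preceding that theorem, so these entries make sense.

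First I would record the elementary fact that the coefficient tensor of a trilinear polynomial $P(s_0,s_1,t_0,t_1,u_0,u_1)$ has rank one precisely when $P$ factors as $a(s_0,s_1)\,b(t_0,t_1)\,c(u_0,u_1)$ for some linear forms $a,b,c$: rank one means the tensor equals $\bs{a}\otimes\bs{b}\otimes\bs{c}$, and $\bs{a},\bs{b},\bs{c}$ are exactly the coefficient vectors of $a,b,c$. Since $\phi$ is a real map, $W(r)$ is a real tensor, and a nonzero real rank-one tensor admits a decomposition with real vectors (its flattenings are real rank-one matrices, each factoring as a real column times a real row), so $a,b,c$ may be taken with real coefficients, i.e. they define points $(a_0:a_1),(b_0:b_1),(c_0:c_1)$ of $\P^1_\R$. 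Comparing coefficients of $s_it_ju_k$ on the two sides of $\langle\bs{\pi}_r,\mbf{f}\rangle = a\,b\,c$ then gives $w_{ijk}/\Delta_{ijk}(r) = a_ib_jc_k$, that is, $w_{ijk} = a_i b_j c_k\,\Delta_{ijk}(r)$.

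To obtain the three displayed cases I would take $r=1,2,3$ in turn and rename the coefficient vectors according to the conventions fixed in the proof of \cref{theorem: tensor tripod}: for $r=1$ the factorization is $\langle\bs{\pi}_1,\mbf{f}\rangle = a_0\,b_1\,c_1$ with $a_0 = \alpha_{00}s_0+\alpha_{10}s_1$, $b_1 = \beta_{01}t_0+\beta_{11}t_1$, $c_1 = \gamma_{01}u_0+\gamma_{11}u_1$ as in \cref{eq: a0 tripod tensor}, yielding $w_{ijk} = \alpha_{i0}\beta_{j1}\gamma_{k1}\Delta_{ijk}(1)$; the cases $r=2,3$ use the factorizations $\langle\bs{\pi}_2,\mbf{f}\rangle = a_1 b_0 c_1$ and $\langle\bs{\pi}_3,\mbf{f}\rangle = a_1 b_1 c_0$ from \cref{eq: factorizations 23 tripod}, giving the remaining two formulas. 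Finally, the clause ``one, and therefore all up to a nonzero scalar'' is exactly the equivalence of items (2) and (3) of \cref{theorem: tensor tripod}: if one tensor $W(r)$ has rank one then so do the other two, hence each of the three weight formulas holds, the only ambiguity being an overall nonzero rescaling (the map $\phi$ being defined only up to a common rescaling of the $w_{ijk}$, which the three normalizations $\Delta_{ijk}(r)$ absorb differently).

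The only real work here is bookkeeping: one must attach the double-indexed scalars $(\alpha_{0i}:\alpha_{1i})$, $(\beta_{0j}:\beta_{1j})$, $(\gamma_{0k}:\gamma_{1k})$ to the correct parameter in each of the three cases, matching the placement of the linear factors in $\langle\bs{\pi}_r,\mbf{f}\rangle$ recorded in the proof of \cref{theorem: tensor tripod}. There is no mathematical obstacle beyond that theorem and the trivial fact that a rank-one tensor is a pure tensor product.
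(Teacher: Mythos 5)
Your proposal is correct and matches the paper's intent: the corollary is stated without a separate proof precisely because it is the immediate unpacking of \cref{theorem: tensor tripod}, namely that $W(r)$ has rank one iff $w_{ijk}/\Delta_{ijk}(r)=\alpha_i\beta_j\gamma_k$ for real vectors, with the index placement read off from the factorizations $\langle\bs{\pi}_1,\mbf{f}\rangle=a_0b_1c_1$, $\langle\bs{\pi}_2,\mbf{f}\rangle=a_1b_0c_1$, $\langle\bs{\pi}_3,\mbf{f}\rangle=a_1b_1c_0$ in that theorem's proof. Your extra remarks on the reality of the rank-one decomposition and on the scalar ambiguity are sound and only make explicit what the paper leaves implicit.
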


\subsubsection{The contractions of a tripod birational map}

\begin{notation}
Let $\phi$ be tripod with weights as in \cref{corollary: weight formulas tripod}. 
In particular, $\phi$ is birational of type $(2,2,2)$, and by \cref{type tripod} we have the following:
\vskip2pt
\begin{enumerate}
\item The rank-two quadric $\Pi\cup\Pi_1$ (resp$.$ $\Pi\cup\Pi_2$ and $\Pi\cup\Pi_3$) belongs to the pencil of $s$-surfaces (resp$.$ $t$- and $u$-surfaces). 
Namely, we find $(\lambda_{00}:\lambda_{10})$ (resp$.$ $(\mu_{00}:\mu_{10})$ and $(\nu_{00}:\nu_{10})$) in $\P_\R^1$ such that $\Pi\cup\Pi_1$ (resp$.$ $\Pi\cup\Pi_2$ and $\Pi\cup\Pi_3$) is defined by 
$
\lambda_{00}\, \sigma_0 + \lambda_{10}\, \sigma_1 
= 0
$
(resp$.$ $\mu_{00}\, \tau_0 + \mu_{10}\, \tau_1 = 0
$
and 
$
\nu_{00}\, \upsilon_0 + \nu_{10}\, \upsilon_1 
= 0
$)
\item The cone $Q$ of apex $\mbf{A}$ through the plane conic $C$ belongs to the pencils of $s$-, $t$-, and $u$-surfaces. 
In particular, $Q$ is defined by a quadratic form $q = q(\mbf{x})$, and we find points $(\lambda_{01}:\lambda_{11}), (\mu_{01}:\mu_{11})$, and $(\nu_{01}:\nu_{11})$ in $\P_\R^1$ such that 
$
q = 
\lambda_{01}\, \sigma_0 + \lambda_{11}\, \sigma_1 
= 
\mu_{01}\, \tau_0 + \mu_{11}\, \tau_1 
= 
\nu_{01}\, \upsilon_0 + \nu_{11}\, \upsilon_1
$
\end{enumerate}  
\end{notation}

As for the previous classes of birational maps, 
\cref{theorem: tensor tripod} readily implies the following corollary. 

\begin{corollary}
\label{corollary: contractions tripod}
Let $\phi$ be tripod with weights as in \cref{corollary: weight formulas tripod}, and let 
\begin{equation}
a_i(s_0,s_1)
= 
\begin{vmatrix}
\phantom{-}s_0 & s_1 \\
-\alpha_{1i} & \alpha_{0i}
\end{vmatrix}
\ ,\ 
b_j(t_0,t_1)
= 
\begin{vmatrix}
\phantom{-}t_0 & t_1 \\
-\beta_{1j} & \beta_{0j}
\end{vmatrix}
\ ,\ 
c_k(u_0,u_1)
= 
\begin{vmatrix}
\phantom{-}u_0 & u_1 \\
-\gamma_{1k} & \gamma_{0k}
\end{vmatrix}
\end{equation}
for each $0\leq i,j,k\leq 1$, as well as 
$
h(s_0,s_1,t_0,t_1,u_0,u_1) 
= 
\omega_1 \, a_0 b_1 c_1 
+
\omega_2 \, a_1 b_0 c_1 
+
\omega_3 \, a_1 b_1 c_0 
$ 
for some $\omega_1,\omega_2,\omega_3$ in $\R$ such that 
$
q = 
\omega_1 \, \pi_2\pi_3
+
\omega_2 \, \pi_1\pi_3
+
\omega_3 \, \pi_1\pi_2 
$. 
Then, we have the following:
\vskip2pt
\begin{enumerate}
\item $\phi$ contracts the surface $a_1 = 0$ (resp$.$ $b_1 = 0$ and $c_1 = 0$) to the line $s$  (resp$.$ $t$ and $u$)
%\item $\phi$ contracts the surface $b_1 = 0$ to the line $t$ 
%\item $\phi$ contracts the surface $c_1 = 0$ to the line $u$
\item $\phi$ contracts the surface $h = 0$ to the plane conic $C$
\item The image of the surface $a_0 = 0$ (resp$.$ $b_0 = 0$ and $c_0 = 0$) is dense in the plane  $\Pi_1$ (resp$.$ $\Pi_2$ and $\Pi_3$)
%\item The image of the surface $b_0 = 0$ is dense in the plane $\Pi_2$
%\item The image of the surface $c_0 = 0$ is dense in the plane $\Pi_3$
\end{enumerate}
\end{corollary}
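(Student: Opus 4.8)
The plan is to deduce all three items directly from the proof of \cref{theorem: tensor tripod}, exactly as \cref{corollary: contractions scaffold} was read off from the proof of \cref{theorem: tensor scaffold}. First I would invoke \cref{corollary: weight formulas tripod}: the hypothesis on the weights forces $\phi$ to be birational of type $(2,2,2)$, and \cref{theorem: tensor tripod} then guarantees that all three tensors $W(1)$, $W(2)$, $W(3)$ have rank one. After the normalization of the weights fixed in \cref{corollary: weight formulas tripod}, expanding $\langle\bs{\pi}_r,\mbf{f}\rangle = \sum_{i,j,k} w_{ijk}\,\Delta_{ijk}(r)^{-1}\,s_it_ju_k$ and applying the rank-one factorization of each $W(r)$ yields the trilinear identities $\langle\bs{\pi}_1,\mbf{f}\rangle = a_0b_1c_1$, $\langle\bs{\pi}_2,\mbf{f}\rangle = a_1b_0c_1$, and $\langle\bs{\pi}_3,\mbf{f}\rangle = a_1b_1c_0$, where $\langle\bs{\pi}_r,\mbf{f}\rangle$ is the pullback by $\phi$ of the plane $\Pi_r$. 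These are the factorizations already produced in the proof of \cref{theorem: tensor tripod}, and the three items follow from them.

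For item~1, the divisor $\{a_1=0\}$ is a common component of the pullbacks of $\Pi_2$ and $\Pi_3$, so $\phi(\{a_1=0\})\subseteq\Pi_2\cap\Pi_3$; since $\Pi_2=\text{Plane}(s,u)$ and $\Pi_3=\text{Plane}(s,t)$ are distinct planes (\cref{property: nondegeneracy}) both containing $s$, that intersection is the line $s$, and $\phi$ contracts $\{a_1=0\}$ onto it; the cases of $b_1$ and $c_1$ are symmetric. For item~3, the pullback of $\Pi_1$ is $\{a_0=0\}\cup\{b_1=0\}\cup\{c_1=0\}$, and since the base locus of $\phi^{-1}$ has codimension at least two this pullback must dominate $\Pi_1$; because item~1 shows that $\{b_1=0\}$ and $\{c_1=0\}$ are contracted to the lines $t,u\subsetneq\Pi_1$, the dominating component is $\{a_0=0\}$, so $\phi(\{a_0=0\})$ is dense in $\Pi_1$, and likewise for $b_0$ and $c_0$.

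For item~2 I would use the factorization of the pullback of the cone $Q$ obtained in the proof of \cref{theorem: tensor tripod}, which splits off three contracted linear factors and leaves a trilinear factor; the surface $\{h=0\}$ that it defines is therefore mapped by $\phi$ into $Q$. By \cref{type tripod} the only components of the base locus of $\phi^{-1}$ lying on $Q$ are $s$, $t$, $u$, and the conic $C$, and the surfaces contracted to $s$, $t$, $u$ are already accounted for by item~1; hence $\{h=0\}$ can only be contracted onto $C$. The main obstacle is bookkeeping rather than geometry: one must keep the indices of $\{a_i,b_j,c_k\}$, the labelling of the planes $\Pi_r$, the scalars $\omega_1,\omega_2,\omega_3$ tied to $q=\omega_1\pi_2\pi_3+\omega_2\pi_1\pi_3+\omega_3\pi_1\pi_2$, and the global normalizing scalar of \cref{corollary: weight formulas tripod} consistent through the three factorizations, so that the fourth factor is exactly the $h$ written in the statement; once the identities $\langle\bs{\pi}_r,\mbf{f}\rangle=\cdots$ are pinned down this is routine, but it is where an index slip would creep in.
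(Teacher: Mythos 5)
Your treatment of items 1 and 3 is correct and essentially the paper's route: the paper's proof is literally ``follows from the proof of \cref{theorem: tensor tripod}'', where the factorizations $\langle\bs{\pi}_1,\mbf{f}\rangle=a_0b_1c_1$, $\langle\bs{\pi}_2,\mbf{f}\rangle=a_1b_0c_1$, $\langle\bs{\pi}_3,\mbf{f}\rangle=a_1b_1c_0$ are established, and your observations (image of $\{a_1=0\}$ lies in $\Pi_2\cap\Pi_3=s$; the component of $\{a_0b_1c_1=0\}$ dominating $\Pi_1$ must be $\{a_0=0\}$ because the other two are contracted and $\phi$ is birational) are valid readings of those factorizations.

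The genuine gap is in item 2. From $q(\mbf{f})=a_1b_1c_1h$ you only get $\phi(\{h=0\})\subseteq Q$; you then assert that $\{h=0\}$ is \emph{contracted} and that its image must be $C$ because $s,t,u$ are ``already accounted for''. Neither step is justified at this point. First, nothing you have proved excludes that $\{h=0\}$ dominates $Q$: ruling this out is equivalent to knowing that $\phi^{-1}$ contracts $Q$ (item 4 of \cref{theorem: inverse tripod}, which is proved later and is not a formal consequence of items 1 and 3). Second, even granting a contraction, ``accounted for'' is not an argument: a priori two distinct divisors could be contracted to the same line, so to exclude, say, $\phi(\{h=0\})\subseteq s$ you would have to show $\{h=0\}$ is a common component of the pullbacks of $\Pi_2$ and $\Pi_3$, i.e. $a_1\mid h$, and rule that out. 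The paper closes both issues in one stroke by a fact you did not use: in the proof of $3\Rightarrow 1$ of \cref{theorem: tensor tripod} the trilinear factor of $q(\mbf{f})$ is shown to be proportional to $\langle\bs{\pi},\mbf{f}\rangle$, the pullback of the plane $\Pi$ supporting $C$; hence $\phi(\{h=0\})\subseteq \Pi\cap Q=C$, which simultaneously gives the contraction and identifies its image. To repair your item 2, cite or reprove that step. On your bookkeeping worry: the factor actually produced by the pullback is $\omega_1a_1b_0c_0+\omega_2a_0b_1c_0+\omega_3a_0b_0c_1$ (the complementary index pattern, as in \cref{eq: base ideal tripod}), so the indices do need care, exactly as you feared.
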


\begin{proof}
Follows from the proof of \cref{theorem: tensor tripod}. 
More explicitly for $2$, 
%In particular, the cone $Q$ of apex $\mbf{A} = s\cap t\cap u$ through the plane conic $C$ is defined by 
%$
%q = 
%\omega_1 \, \pi_2\pi_3
%+
%\omega_2 \, \pi_1\pi_3
%+
%\omega_3 \, \pi_1\pi_2
%$. 
%S
since $\phi$ contracts the surfaces $a_1 = 0$, $b_1 = 0$, $c_1 = 0$ respectively to $s,t,u$, we find the factorization 
$$
q(f_0,f_1,f_2,f_3) 
= 
a_1 b_1 c_1 
\, 
(
\omega_1 \, a_1 b_0 c_0
+
\omega_2 \, a_0 b_1 c_0
+
\omega_3 \, a_0 b_0 c_1
)
= 
a_1 b_1 c_1 
h
\ .
$$
By the proof of 3 implies 1 in \cref{theorem: tensor tripod}, the image of $h = 0$ lies on the plane $\Pi$. 
Therefore, it lies in $C = \Pi \cap Q$.  
\end{proof}

\subsection{Inverse and base locus}

%As in \cref{inverse: hexahedral}, \cref{inverse: pyramidal}, and \cref{inverse: scaffold}, w
We close the section giving explicit formulas for the inverse.
% of a tripod birational map.

\begin{theorem} 
\label{theorem: inverse tripod}
Let $\phi$ be tripod with weights as in \cref{corollary: weight formulas tripod}. 
Then, $\phi^{-1}$ is given by 
$$
(x_0 : x_1 : x_2 : x_3) 
\mapsto 
\left(
\alpha_{1i} \, \lambda_{1i} \, \sigma_1
: 
\alpha_{0i} \, \lambda_{0i} \, \sigma_0
\right)
\times
\left( 
\beta_{1j} \, \mu_{1j} \, \tau_1
: 
\beta_{0j} \, \mu_{0j} \, \tau_0
\right)
\times 
\left(
\gamma_{1k} \, \nu_{1k} \, \upsilon_1
: 
\gamma_{0k} \, \nu_{0k} \, \upsilon_0
\right)
\ ,
$$
where any $0\leq i,j,k\leq 1$ are valid. 
With the notation of \cref{corollary: contractions tripod}, we have the following: 
\vskip2pt 
\begin{enumerate}
\item The base locus of $\phi$ is defined by the ideal  
\begin{equation}
\label{eq: base ideal tripod}
(a_0 , b_0 , c_0) \cap (a_1^2 , b_1^2 , c_1^2 , a_1 b_1 , a_1 c_1 , b_1 c_1 , \omega_1 \, a_1 b_0 c_0 + \omega_2 \, a_0 b_1 c_0 + \omega_3 \, a_0 b_0 c_1 )
\end{equation}
\item The base locus of $\phi^{-1}$ is $s\cup t\cup u\cup C$
\item $\phi$ blows up the base point $(a_0,b_0,c_0)$ to the plane $\Pi$
\item $\phi$ blows up the nonreduced base point $(a_1,b_1,c_1)$ to the cone $Q$
\end{enumerate}
\end{theorem}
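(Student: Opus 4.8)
The plan is to follow the pattern of \cref{theorem: inverse pyramidal} and \cref{theorem: inverse scaffold}. First, by \cref{corollary: weight formulas tripod}, $\phi$ is birational of type $(2,2,2)$, so the inverse on each $\P^1$-factor is given by a pencil of quadrics in $\P^3$; equivalently, by line isomorphisms $\sigma:\P^1\to L_s$, $\tau:\P^1\to L_t$, $\upsilon:\P^1\to L_u$ into the $\P^9$ of quadrics of $\P^3$. Since the $s$-surfaces at $s_i=0$ are the boundary quadrics $\Sigma_i$, we have $\sigma(1:0)=\sigma_0$ and $\sigma(0:1)=\sigma_1$, and similarly for $\tau,\upsilon$; hence each isomorphism is pinned down by the image of one further parameter value. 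To identify that image I would use \cref{corollary: contractions tripod}: $\phi$ contracts $\{a_1=0\}$ to $s$. The line $s$ passes through the apex $\mbf{A}$ and meets the conic $C$ at $s\cap\Pi$ (with $s\cap\Pi\neq\mbf{A}$, as $\mbf{A}\notin\Pi$; recall that by \cref{construction: tripod} the plane $\Pi$ contains none of $s,t,u$), so $s$ is a ruling of the cone $Q$; moreover $s$ is not a component of $\Sigma_0\cap\Sigma_1=t\cup u\cup C$, the base locus of the $s$-pencil, so $Q$ is the \emph{unique} $s$-surface through $s$. Therefore $\sigma$ sends the zero $(-\alpha_{11}:\alpha_{01})$ of $a_1$ to $q$, and as $q=\lambda_{01}\sigma_0+\lambda_{11}\sigma_1$, determining the line isomorphism from its three prescribed values gives $(A_0:A_1)=(\alpha_{11}\lambda_{11}\sigma_1:\alpha_{01}\lambda_{01}\sigma_0)$; repeating the computation with the other distinguished pencil member $\Pi\cup\Pi_1=\{\lambda_{00}\sigma_0+\lambda_{10}\sigma_1=0\}$, the $s$-surface through $\{a_0=0\}$, returns the same map written with $i=0$, which is why any $i$ is allowed. (Up to rearrangement this is the inverse already obtained in the proof of \cref{theorem: tensor tripod}.) The analogous computations on the $t$- and $u$-factors give $(B_0:B_1)$ and $(C_0:C_1)$, establishing the formula for $\phi^{-1}$.

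For item $1$, let $I$ denote the ideal \cref{eq: base ideal tripod}. From the factorizations in \cref{corollary: contractions tripod}, the trilinear forms $\langle\bs{\pi}_1,\mbf{f}\rangle=a_0b_1c_1$, $\langle\bs{\pi}_2,\mbf{f}\rangle=a_1b_0c_1$, $\langle\bs{\pi}_3,\mbf{f}\rangle=a_1b_1c_0$ and $\langle\bs{\pi},\mbf{f}\rangle=\omega_1a_1b_0c_0+\omega_2a_0b_1c_0+\omega_3a_0b_0c_1$ all lie in $I_{(1,1,1)}$: each of the first three is simultaneously divisible by one of $b_1c_1,a_1c_1,a_1b_1$ and by one of $a_0,b_0,c_0$, while the last is the remaining generator of the second primary component of $I$ and also lies in $(a_0,b_0,c_0)$. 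The four planes $\Pi_1,\Pi_2,\Pi_3,\Pi$ have no common point — pairwise they cut out the lines $u,t,s$ through $\mbf{A}$, and $\mbf{A}\notin\Pi$ — so $\pi_1,\pi_2,\pi_3,\pi$ are linearly independent and hence $\R\langle\langle\bs{\pi}_1,\mbf{f}\rangle,\langle\bs{\pi}_2,\mbf{f}\rangle,\langle\bs{\pi}_3,\mbf{f}\rangle,\langle\bs{\pi},\mbf{f}\rangle\rangle=\R\langle f_0,f_1,f_2,f_3\rangle=I_{(1,1,1)}$. Since by \cite[Theorem~5.8]{trilinear} the saturated base ideal $(f_0,\ldots,f_3):\mathfrak{N}^\infty$ has the shape of \cref{eq: (222) BI}, with linear forms that are precisely the $a_i,b_j,c_k$ of \cref{corollary: contractions tripod} because these encode the contractions of $\phi$, it follows that this saturated base ideal equals $I$, which proves item $1$.

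Items $2$–$4$ follow from \cref{type tripod} and its proof. The base locus of $\phi^{-1}$ is the locus where one of the three pencils degenerates, namely $(\Sigma_0\cap\Sigma_1)\cup(T_0\cap T_1)\cup(Y_0\cap Y_1)=s\cup t\cup u\cup C$. For the blow-ups, the proof of \cref{type tripod} exhibits the exceptional divisor $E$ of the resolution $\Gamma\subset(\P^1)^3\times\P^3$ of the base locus of $\phi$ as having two irreducible components: one over the reduced base point $(a_0,b_0,c_0)$, which projects onto $\Pi=\{G_0=0\}$, where $G_0=\pi$ is the common factor of the pullbacks $a_0(A_0,A_1),b_0(B_0,B_1),c_0(C_0,C_1)$; and one over the nonreduced base point $(a_1,b_1,c_1)$, which projects onto $Q=\{G_1=0\}$ with $G_1=q^2$. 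This yields items $3$ and $4$.

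I expect the main obstacle to be the bookkeeping needed to reconcile the three presentations of each pencil of surfaces — by the boundary quadrics $\sigma_0,\sigma_1$, by the rank-two quadric $\Pi\cup\Pi_1$ together with the cone $Q$, and by the Rees-algebra description $(A_0,A_1)=(\pi_1\pi,q)$ coming from \cite[Theorem~5.8]{trilinear} through the proof of \cref{type tripod} — and, in particular, checking that the scalars $\lambda_{0i},\lambda_{1i}$ combine with $\alpha_{0i},\alpha_{1i}$ exactly so that the closed form $(\alpha_{1i}\lambda_{1i}\sigma_1:\alpha_{0i}\lambda_{0i}\sigma_0)$ emerges, and does so independently of the choice of $i\in\{0,1\}$. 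Everything else is routine given the results of \cref{sec: tripod} proved above.
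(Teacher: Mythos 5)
Your proposal is correct and follows essentially the same route as the paper's own proof: the inverse formula is obtained by pinning the three line isomorphisms through the contractions of \cref{corollary: contractions tripod} and the uniqueness of the pencil members containing $s$, $t$, $u$ and $\Pi_1$, $\Pi_2$, $\Pi_3$; item $1$ is obtained by identifying $I_{(1,1,1)}$ with the span of $\langle\bs{\pi}_1,\mbf{f}\rangle,\langle\bs{\pi}_2,\mbf{f}\rangle,\langle\bs{\pi}_3,\mbf{f}\rangle,\langle\bs{\pi},\mbf{f}\rangle = \R\langle f_0,f_1,f_2,f_3\rangle$; and items $2$--$4$ are read off \cref{type tripod} and its proof, exactly as in the paper. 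The only cosmetic deviation is that you close item $1$ by invoking \cite[Theorem 5.8]{trilinear} once more, whereas the paper concludes directly from the computation of $I_{(1,1,1)}$.
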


\begin{proof}
By \cref{corollary: weight formulas tripod}, $\phi$ is birational of type $(2,2,2)$. 
As explained in the proofs of \cref{theorem: inverse pyramidal} and 
\cref{theorem: inverse scaffold}, the inverses on each factor of $\PPP$ can be regarded as line isomorphisms $\sigma,\tau,\upsilon:\P^1 \xrightarrow{} (\P^9)^\vee$. 
%, to the space $(\P^9)^\vee$ of quadrics in $\P^3$, satisfying 
%%and $\sigma(\mbf{f}) = \tau(\mbf{f}) = \upsilon(\mbf{f}) = 0$. 
%%By definition, we already have 
%$$
%\sigma:(1:0) \mapsto \sigma_0 
%\ ,\ 
%(0:1) \mapsto \sigma_1 
%\ ,\ 
%\tau:(1:0) \mapsto \tau_0 
%\ ,\ 
%(0:1) \mapsto \tau_1 
%\ ,\ 
%\upsilon: (1:0) \mapsto \upsilon_0 
%\ ,\ 
%(0:1) \mapsto \upsilon_1 
%\ .
%$$
By \cref{corollary: contractions tripod}, 
$\phi$ contracts the surface $a_1 = 0$ (resp$.$ $b_1 = 0$ and $c_1 = 0$) to the line $s \subset Q$ (resp$.$ $t\subset Q$ and $u\subset Q$). 
Moreover, $\phi$ also maps the surface $a_0 = 0$ (resp$.$ $b_0 = 0$ and $c_0 = 0$) to the plane $\Pi_1$ (resp$.$ $\Pi_2$ and $\Pi_3$). 
These observations readily imply 
$
\sigma
(-\alpha_{10}:\alpha_{00}) 
=
\pi_1 \pi
$ and 
$
\sigma
(-\alpha_{11}:\alpha_{01}) 
=
q
$, 
since $\Pi_1\cup\Pi$ and $Q$ the unique $s$-surfaces containing respectively $\Pi_1$ and $s$. 
Hence, $\sigma:\P^1 \xrightarrow{} (\P^9)^\vee$ is explicitly given by  
$$
(s_0:s_1)
\mapsto 
\begin{vmatrix}
s_0 & s_1 \\ 
\alpha_{1i} \, \lambda_{1i} \, \sigma_1 & \alpha_{0i} \, \lambda_{0i} \, \sigma_0
\end{vmatrix}
$$
where both $i = 0,1$ are admissible. 
With parallel arguments, we derive explicitly $\tau$ and $\upsilon$. 
%$$
%\tau:
%(t_0:t_1)
%\mapsto 
%\begin{vmatrix}
%t_0 & t_1 \\ 
%\beta_{1j} \, \mu_{1j} \, \tau_1 & \beta_{0j} \, \mu_{0j} \, \tau_0
%\end{vmatrix}
%\ \ ,\ \ 
%\upsilon: 
%(u_0:u_1)
%\mapsto 
%\begin{vmatrix}
%s_0 & s_1 \\ 
%\gamma_{1k} \, \nu_{1k} \, \gamma_1 & \gamma_{0k} \, \nu_{0k} \, \gamma_0
%\end{vmatrix}
%$$
%for any $0\leq j,k\leq 1$, and $\phi^{-1}$ is as in the statement.  

The base locus of $\phi^{-1}$ follows from \cref{type tripod}. 
Moreover, letting $I$ be \cref{eq: base ideal tripod} we find
$$
I_{(1,1,1)} 
= 
\R
\langle 
a_0 b_1 c_1 
,
a_1 b_0 c_1 
,
a_1 b_1 c_0 
, 
h
\rangle 
= 
\R
\langle 
\langle 
\bs{\pi}_1 , \mbf{f} 
\rangle 
, 
\langle 
\bs{\pi}_2 , \mbf{f} 
\rangle 
, 
\langle 
\bs{\pi}_3 , \mbf{f} 
\rangle 
, 
\langle 
\bs{\pi} , \mbf{f} 
\rangle 
\rangle 
= 
\R\langle f_0,f_1,f_2,f_3\rangle
\ .
$$
%where $I'$ is the ideal on the right-hand side of \cref{eq: saturation tripod}, which is saturated. 
Hence, $I$ defines the base locus of $\phi$.  
The blow-ups follow from the proof of \cref{type tripod}. 
\end{proof}

\section{Effective manipulation of birational trilinear volumes}
\label{sec: manip}

We conclude the paper introducing several
effective methods for approximating and manipulating birational trilinear volumes.
%., that rely on the results of the
%previous sections%., and exploit the connection between birationality and tensor rank.

\subsection{Distance to birationality}
\label{subsec: distbir}

The intuition of \cref{Q: distance} is clear, but the question itself lacks precision.
Specifically, we require a notion of ``distance to birationality'' to precise the term ``far''.
Interestingly, 
\cref{theorem: tensor hexahedral}, \cref{theorem: tensor pyramidal} - \cref{theorem: tensor tripod} provide a way to quantify this distance.
%\cite{EDdegree,TensorFox}

\begin{definition}
\label{definition: distance to bir}
Let $\phi$ be either hexahedral, pyramidal, scaffold, or tripod, and let $W$ be any of the tensors characterizing birationality. 
We define the \textit{distance to birationality} of $\phi$ as
\begin{equation}
\distbir(\phi) 
\coloneqq
\min_{P\in V}
\,
\lVert W \rVert^{-1}\lVert W - P \rVert
\ ,
\end{equation}
where $V$ is the affine cone over the Segre embedding of $\P_\R^1\times \P_\R^1\times \P_\R^1$.
\end{definition}

%Notice that $\distbir(\phi)$ is well-defined, since although the $w_{ijk}$'s and $\Delta_{ijk}$'s are defined up to nonzero scalar, they yield proportional tensors. 
%Hence, the relative distances coincide.

Within this formulation, a ``closest birational map'' refers to a point in $V$ that minimizes the distance to $W$. 
In order to compute a birational approximation of $\phi$, we can thus solve the problem %the optimization problem % compute a rank-one tensor $W_\text{bir}$ such that $\distbir(\phi) = \lVert W \rVert^{-1} \lVert W - W_\text{bir} \rVert $. 
$$
\min_{\substack{W_\text{bir}}} \, \lVert W \rVert^{-1}\lVert W - W_\text{bir} \rVert \text{ where } 
W_\text{bir} = \bs{\alpha} \otimes \bs{\beta} \otimes \bs{\gamma}
\text{ for some }
\bs{\alpha}, 
\bs{\beta},
\bs{\gamma}
\text{ in }
\R^2 
\ .
$$
%In particular, it is an orthogonal projection of $\phi$ onto $V$.  
%For a general $\phi$, there are exactly $6$ such projections, where at least two of them are real. 
%Equivalently, the Euclidean distance degree of $V$ is $6$ \cite[Example 8$.$2]{EDdegree}.
There is an extensive body of work related to tensor rank and fixed-rank approximations to solve this problem. 
An important tool is the Canonical Polyadic Decomposition (CPD) (e$.$g$.$~\cite{CPD_2,tensorbook}). 
In \cref{example: birational approx} we illustrate how CPD can be used in practice to compute birational approximations. 

\subsection{Deformation of birational maps} 

An important task in the manipulation of birational maps is the continuous variation of the defining parameters while preserving birationality. 
The reason is that applications typically require to move the control points until the designer achieves the desired shape (e$.$g$.$ \cite{sederberg2D,SGW16,
construction_quadratic_Cremona_2,
construction_quadratic_Cremona_1}). %\cite{EDdegree,TensorFox}

Let $1\leq d_1,d_2,d_3\leq 2$ and $\mathcal{P}_{(d_1,d_2,d_3)}$ be the set of configurations of control points that are either hexahedral, pyramidal, scaffold, or tripod. 
From our results, it follows that the rational map 
\begin{eqnarray*}
	\Phi_{(d_1,d_2,d_3)}: (\P_\R^1)^3 \times \mathcal{P}_{(d_1,d_2,d_3)} & \dasharrow & \mathcal{W} \times \mathcal{P}_{(d_1,d_2,d_3)} 
	\\ 
\nonumber
(\alpha_0:\alpha_1) \times (\beta_0:\beta_1) \times (\gamma_0:\gamma_1) \times (\ldots \, ,\, \mbf{P}_{ijk} \, ,\,  \ldots) & \mapsto &
(\ldots : w_{ijk} : \ldots) \times (\ldots \, ,\, \mbf{P}_{ijk} \, ,\,  \ldots) 
= 
\\ \nonumber
	& &  
(\ldots : \Delta_{ijk} \, \alpha_i \, \beta_j \, \gamma_k : \ldots) \times (\ldots \, ,\, \mbf{P}_{ijk} \, ,\, \ldots)
\end{eqnarray*} 
is dominant on the component of trilinear birational maps of type $(d_1,d_2,d_3)$. 
In particular, this parametrization explains how deformation can be performed.
Specifically, a user can continuously decide new control points using \cref{construction: hexahedral} - \cref{construction: tripod}, and update the $\Delta_{ijk}$'s accordingly (which are continuous functions in the coordinates of the points) while keeping the same tensor decomposition  
$W = (\alpha_0, \alpha_1) \otimes (\beta_0, \beta_1) \otimes (\gamma_0, \gamma_1)$ for each instant of the deformation by applying the formulas $w_{ijk} = \alpha_i \, \beta_j \, \gamma_k \, \Delta_{ijk}$ for each $0\leq i,j,k\leq 1$. 

To illustrate, in \cref{example: deformation} we present a deformation of hexahedral birational maps. 
This case is particularly interesting, since hexahedral meshes appear frequently in geometric modeling.
Furthermore, the generation of the control points  in \cref{construction: hexahedral} is simple and intuitive.

% Commented for the moment
%\section*{Acknowledgments}
%We would like to acknowledge the assistance of volunteers in putting
%together this example manuscript and supplement.

\bibliographystyle{siamplain}
\bibliography{M166413}

\begin{thebibliography}{10}

\bibitem{Alberich}
{\sc M.~Alberich-Carrami\~{n}ana}, {\em Geometry of the plane {C}remona maps},
  vol.~1769 of Lecture Notes in Mathematics, Springer-Verlag, Berlin, 2002,
  \url{https://doi.org/10.1007/b82933}.

\bibitem{bezier_original}
{\sc P.~B{\'e}zier}, {\em Proc{\'e}d{\'e} de d{\'e}finition num{\'e}rique des
  courbes et surfaces non math{\'e}matiques}, Automatisme, 13 (1968),
  pp.~189--196.

\bibitem{bisiplane}
{\sc C.~Bisi, A.~Calabri, and M.~Mella}, {\em On plane {C}remona
  transformations of fixed degree}, J. Geom. Anal., 25 (2015), pp.~1108--1131,
  \url{https://doi.org/10.1007/s12220-013-9459-9}.

\bibitem{EFFECTIVE_BIGRAD}
{\sc N.~Botbol, L.~Bus\'{e}, M.~Chardin, S.~H. Hassanzadeh, A.~Simis, and Q.~H.
  Tran}, {\em Effective criteria for bigraded birational maps}, J. Symbolic
  Comput., 81 (2017), pp.~69--87,
  \url{https://doi.org/10.1016/j.jsc.2016.12.001}.

\bibitem{computation_preimages_texture_2}
{\sc B.~Burley and W.~D.~A. Studios}, {\em Physically-based shading at disney},
  in Acm Siggraph, vol.~2012, vol. 2012, 2012, pp.~1--7.

\bibitem{trilinear}
{\sc L.~Bus\'{e}, P.~Gonz\'{a}lez-Maz\'{o}n, and J.~Schicho}, {\em Tri-linear
  birational maps in dimension three}, Math. Comp., 92 (2023), pp.~1837--1866,
  \url{https://doi.org/10.1090/mcom/3804}.

\bibitem{noether-castelnuovo}
{\sc G.~Castelnuovo}, {\em Le trasformazioni generatrici del gruppo cremoniano
  nel piano}, Turin R. Accad. d. Sci., 1901.

\bibitem{petitdegree}
{\sc D.~Cerveau and J.~D\'{e}serti}, {\em Transformations birationnelles de
  petit degr\'{e}}, vol.~19 of Cours Sp\'{e}cialis\'{e}s [Specialized Courses],
  Soci\'{e}t\'{e} Math\'{e}matique de France, Paris, 2013.

\bibitem{computation_preimages_required_1}
{\sc S.~Coquillart}, {\em Extended free-form deformation: A sculpturing tool
  for 3d geometric modeling}, in Proceedings of the 17th annual conference on
  Computer graphics and interactive techniques, 1990, pp.~187--196.

\bibitem{cox_applications_of_polynomials}
{\sc D.~A. Cox}, {\em Applications of polynomial systems}, vol.~134 of CBMS
  Regional Conference Series in Mathematics, American Mathematical Society,
  Providence, RI, [2020] \copyright 2020.
\newblock With contributions by Carlos D'Andrea, Alicia Dickenstein, Jonathan
  Hauenstein, Hal Schenck and Jessica Sidman.

\bibitem{cremona1}
{\sc L.~Cremona}, {\em Sulle trasformazione geometriche delle figure piane},
  Tipi Gamberini e Parmeggiani, 1863.

\bibitem{de_Casteljau_original_1}
{\sc P.~De~Casteljau}, {\em Courbes {\`a} p{\^o}les}, National Industrial
  Property Institute (France),  (1959).

\bibitem{de_Casteljau_original_2}
{\sc P.~De~Casteljau}, {\em Courbes et surfaces {\`a} p{\^o}les}, Andr{\'e}
  Citro{\"e}n, Automobiles SA, Paris, 66 (1963).

\bibitem{deserti_some_properties}
{\sc J.~D\'{e}serti}, {\em Some properties of the {C}remona group}, vol.~21 of
  Ensaios Matem\'{a}ticos [Mathematical Surveys], Sociedade Brasileira de
  Matem\'{a}tica, Rio de Janeiro, 2012.

\bibitem{cubic}
{\sc J.~D\'{e}serti and F.~Han}, {\em On cubic birational maps of {$\Bbb
  P^3_\Bbb C$}}, Bull. Soc. Math. France, 144 (2016), pp.~217--249,
  \url{https://doi.org/10.24033/bsmf.2712}.

\bibitem{dolgachev_cremona}
{\sc I.~Dolgachev}, {\em \textit{{T}he {C}remona group and its subgroups} [book
  review of 4256046]}, Bull. Amer. Math. Soc. (N.S.), 59 (2022), pp.~617--622.

\bibitem{EDdegree}
{\sc J.~Draisma, E.~Horobe\c{t}, G.~Ottaviani, B.~Sturmfels, and R.~R. Thomas},
  {\em The {E}uclidean distance degree of an algebraic variety}, Found. Comput.
  Math., 16 (2016), pp.~99--149,
  \url{https://doi.org/10.1007/s10208-014-9240-x}.

\bibitem{farin}
{\sc G.~Farin}, {\em Curves and surfaces for computer-aided geometric design},
  Computer Science and Scientific Computing, Academic Press, Inc., San Diego,
  CA, fourth~ed., 1997.
\newblock A practical guide, Chapter 1 by P. B\'{e}zier; Chapters 11 and 22 by
  W. Boehm, With 1 IBM-PC floppy disk (3.5 inch; HD).

\bibitem{cad_book_1}
{\sc G.~Farin}, {\em Curves and surfaces for computer-aided geometric design: a
  practical guide}, Elsevier, 2014.

\bibitem{floater}
{\sc M.~S. Floater}, {\em The inverse of a rational bilinear mapping}, Comput.
  Aided Geom. Design, 33 (2015), pp.~46--50,
  \url{https://doi.org/10.1016/j.cagd.2015.01.002}.

\bibitem{computation_preimages_required_2}
{\sc J.~E. Gain and N.~A. Dodgson}, {\em Preventing self-intersection under
  free-form deformation}, IEEE Transactions on Visualization and Computer
  Graphics, 7 (2001), pp.~289--298.

\bibitem{harris}
{\sc J.~Harris}, {\em Algebraic geometry}, vol.~133 of Graduate Texts in
  Mathematics, Springer-Verlag, New York, 1992,
  \url{https://doi.org/10.1007/978-1-4757-2189-8}.
\newblock A first course.

\bibitem{HartshorneBook}
{\sc R.~Hartshorne}, {\em Algebraic geometry}, Graduate Texts in Mathematics,
  No. 52, Springer-Verlag, New York-Heidelberg, 1977.

\bibitem{hudson}
{\sc H.~P. Hudson}, {\em Cremona transformations in plane and space},
  vol.~1927, Cambridge, 1927.

\bibitem{CPD_2}
{\sc T.~G. Kolda and B.~W. Bader}, {\em Tensor decompositions and
  applications}, SIAM Rev., 51 (2009), pp.~455--500,
  \url{https://doi.org/10.1137/07070111X}.

\bibitem{tensorbook}
{\sc J.~M. Landsberg}, {\em Tensors: geometry and applications}, Representation
  theory, 381 (2012), p.~3.

\bibitem{computation_preimages_3D_curve_meshes_2}
{\sc B.~H. Le and Z.~Deng}, {\em Smooth skinning decomposition with rigid
  bones}, ACM Transactions on Graphics (TOG), 31 (2012), pp.~1--10.

\bibitem{thesis_poles}
{\sc P.~Maz\'on}, {\em Effective methods for multilinear birational
  transformations and contributions to polynomial data analysis}, PhD thesis,
  Universit\'e C\^ote d'Azur, 2023.

\bibitem{computation_preimages_texture_3}
{\sc T.~McReynolds, D.~Blythe, B.~Grantham, and S.~Nelson}, {\em Advanced
  graphics programming techniques using opengl}, Computer Graphics,  (1998),
  pp.~95--145.

\bibitem{ottaviani_tensor_rank}
{\sc G.~Ottaviani and P.~Reichenbach}, {\em Tensor rank and complexity}, arXiv
  preprint arXiv:2004.01492,  (2020).

\bibitem{cad_book_2}
{\sc L.~Piegl and W.~Tiller}, {\em The NURBS book}, Springer Science \&
  Business Media, 1996.

\bibitem{computation_preimages_3D_curve_meshes_1}
{\sc D.~F. Rogers and J.~A. Adams}, {\em Mathematical elements for computer
  graphics}, McGraw-Hill, Inc., 1989.

\bibitem{computation_preimages_warping_2}
{\sc D.~Rueckert, L.~I. Sonoda, C.~Hayes, D.~L. Hill, M.~O. Leach, and D.~J.
  Hawkes}, {\em Nonrigid registration using free-form deformations: application
  to breast mr images}, IEEE transactions on medical imaging, 18 (1999),
  pp.~712--721.

\bibitem{SGW16}
{\sc T.~W. Sederberg, R.~N. Goldman, and X.~Wang}, {\em Birational 2{D}
  free-form deformation of degree {${1\times n}$}}, Comput. Aided Geom. Design,
  44 (2016), pp.~1--9, \url{https://doi.org/10.1016/j.cagd.2016.02.020}.

\bibitem{sederberg2D}
{\sc T.~W. Sederberg and J.~Zheng}, {\em Birational quadrilateral maps},
  Comput. Aided Geom. Design, 32 (2015), pp.~1--4,
  \url{https://doi.org/10.1016/j.cagd.2014.11.001}.

\bibitem{computation_preimages_morphing_2}
{\sc O.~Sorkine, D.~Cohen-Or, Y.~Lipman, M.~Alexa, C.~R{\"o}ssl, and H.-P.
  Seidel}, {\em Laplacian surface editing}, in Proceedings of the 2004
  Eurographics/ACM SIGGRAPH symposium on Geometry processing, 2004,
  pp.~175--184.

\bibitem{computation_preimages_warping_3}
{\sc A.~Sotiras, C.~Davatzikos, and N.~Paragios}, {\em Deformable medical image
  registration: A survey}, IEEE transactions on medical imaging, 32 (2013),
  pp.~1153--1190.

\bibitem{construction_quadratic_Cremona_2}
{\sc X.~Wang, Y.~Han, Q.~Ni, R.~Li, and R.~Goldman}, {\em Birational quadratic
  planar maps with generalized complex rational representations}, Mathematics,
  11 (2023), p.~3609.

\bibitem{construction_quadratic_Cremona_1}
{\sc X.~Wang, M.~Wu, Y.~Liu, and Q.~Ni}, {\em Constructing quadratic birational
  maps via their complex rational representation}, Comput. Aided Geom. Design,
  85 (2021), pp.~Paper No. 101969, 11,
  \url{https://doi.org/10.1016/j.cagd.2021.101969}.

\end{thebibliography}

\newpage

\appendix 

\section{Supplementary Materials}

In these supplementary materials, we discuss the methods introduced in the paper, illustrating them through several examples that emphasize their significance for various applications.

\subsection{Dicussion of the distance to birationality}

Let $\mathfrak{Rat}_{(1,1,1)}$ denote the space of trilinear rational maps, and let $\mathfrak{Bir}_{(1,1,1)}\subset \mathfrak{Rat}_{(1,1,1)}$ be the locus of birational maps. 

In full generality, measuring the distance from a trilinear rational map $\phi$ to $\mathfrak{Bir}_{(1,1,1)}$ presents a delicate challenge. 
The first complication arises from the high dimensionality, since $\phi$ is determined by $31$ independent parameters (in the monomial basis: $8\times 4$ coefficients up to scalar; in the Bernstein basis: $3$ coordinates $\times$ $8$ control points + $8$ weights up to scalar) and hence $\mathfrak{Rat}_{(1,1,1)}\cong \P^{31}$. 
Secondly, by \cite[Theorem 4$.$1]{trilinear} $\mathfrak{Bir}_{(1,1,1)}$ has eight irreducible components of various dimensions, and the distance from  $\phi$ to each of the components is in general different. 

Remarkably, the notion of distance introduced in \cref{definition: distance to bir} depends solely on the weights of $\phi$ (8 parameters up to scalar).
This is a design-oriented approach, since a designer will typically move the control points of a rational map but will rarely modify the weights.

\begin{remark}
The following observations are important:
\begin{enumerate}
\item For scaffold and tripod maps, several choices for $W$ are possible. However, \cref{theorem: tensor scaffold} and \cref{theorem: tensor tripod} assert that $\distbir(\phi) = 0$ represents the same condition for any choice. 
\item $\distbir(\phi)$ is well defined. More explicitly, altough the vectors of $w_{ijk}$'s and $\Delta_{ijk}$'s are defined up to a nonzero scalar, they yield  proportional tensors. Hence, the relative distances coincide. 
\end{enumerate}
\end{remark}

\subsection{Applications}
\label{examples}

The following example illustrates how a tripod rational map is generated.

\begin{example}
\label{example: quadcube}
Recall that we identify the vector 
$
\setlength\arraycolsep{1pt}
\begin{pmatrix}
1, & x, & y, & z
\end{pmatrix}
$ in $\R^4$ with the point $(x,y,z)$ in the affine chart $\A_\R^3\subset\P_\R^3$ defined by $x_0 = 1$. 
Let 
$
\setlength\arraycolsep{1pt}
\mbf{A} 
= 
\begin{pmatrix}
1, & 0, & 0, & 0
\end{pmatrix}$, 
and the lines in $\P_\R^3$ defined by 
$$
s : x_2 = x_3 = 0
\ , \ 
t : x_1 = x_3 = 0
\ ,\ 
u : x_1 = x_2 = 0
\ .
$$
Similarly, consider the plane conic $C$ 
in $\P_\R^3$ defined by the ideal 
$(
3 x_0 - x_1 - x_2 - x_3 
\ ,\ 
x_2 x_3 + x_1 x_3 + x_1 x_2
) 
$. 
If we choose the control point 
$
\setlength\arraycolsep{1pt}
\mbf{P}_{000} 
= 
\begin{pmatrix}
1, & \frac{1}{4}, & \frac{1}{4}, & \frac{1}{4}
\end{pmatrix}$, 
\cref{construction: tripod} yields the boundary lines 
$$
s_{00} = \ovl{\mbf{P}_{000}\mbf{P}_{100}}
\ ,\ 
t_{00} = \ovl{\mbf{P}_{000}\mbf{P}_{010}}
\ ,\ 
u_{00} = \ovl{\mbf{P}_{000}\mbf{P}_{001}}
\ ,
$$
where we have taken 
\begin{eqnarray*}
\setlength\arraycolsep{1pt}
\mbf{P}_{100} 
= 
\begin{pmatrix}
1, & \frac{3}{16}, & \frac{27}{80}, & \frac{27}{80}
\end{pmatrix}
\ ,\
\mbf{P}_{010} 
= 
\begin{pmatrix}
1, & \frac{27}{80}, & \frac{3}{16}, & \frac{27}{80}
\end{pmatrix}
\ ,\
\mbf{P}_{001} 
= 
\begin{pmatrix}
1, & \frac{27}{80}, & \frac{27}{80}, & \frac{3}{16}
\end{pmatrix}
\ .
\end{eqnarray*}
Continuing with \cref{construction: tripod}, 
we compute the remaining control points, 
\begin{align*}
\setlength\arraycolsep{1pt}
\mbf{P}_{110} 
= 
\begin{pmatrix}
1, & \frac{45}{172}, & \frac{45}{172}, & \frac{81}{172}
\end{pmatrix}
\ ,&\
\setlength\arraycolsep{1pt}
\mbf{P}_{101} 
= 
\begin{pmatrix}
1, & \frac{45}{172}, & \frac{81}{172}, & \frac{45}{172}
\end{pmatrix}
\ ,\\
\setlength\arraycolsep{1pt}
\mbf{P}_{011} 
= 
\begin{pmatrix}
1, & \frac{81}{172}, & \frac{45}{172}, & \frac{45}{172}
\end{pmatrix}
\ ,&\
\setlength\arraycolsep{1pt} 
\mbf{P}_{111} 
= 
\begin{pmatrix}
1, & \frac{3}{8}, & \frac{3}{8}, & \frac{3}{8}
\end{pmatrix}
\ .
\end{align*}

In \cref{quadcube}, we showcase the deformation of the unit cube $[0,1]^3$ induced by $\phi$, with uniform weights $w_{ijk} = 1$ for every $0\leq i,j,k \leq 1$. 
Additionally, we depict the lines involved in the generation of the control points.
\begin{figure}
\begin{minipage}{.32\textwidth}
	\begin{center}	\includegraphics[width=0.95\textwidth]{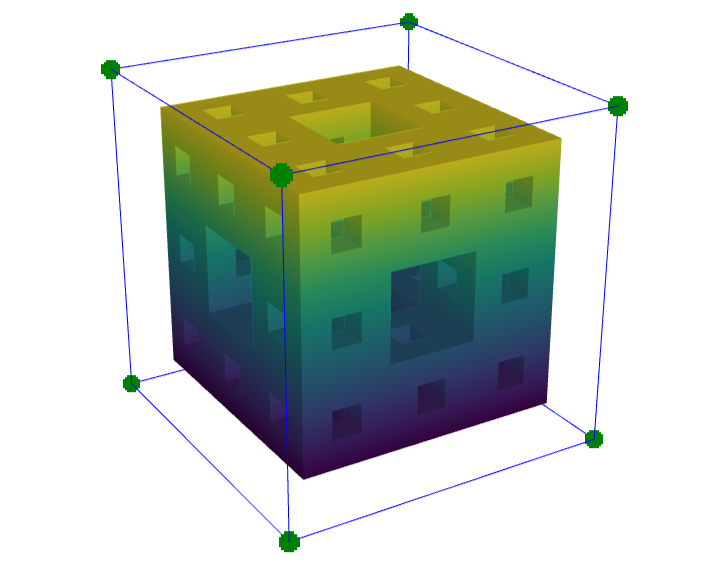}
	\end{center}
\end{minipage}
\begin{minipage}{.32\textwidth}
	\begin{center}
	\includegraphics[width=.93\textwidth]{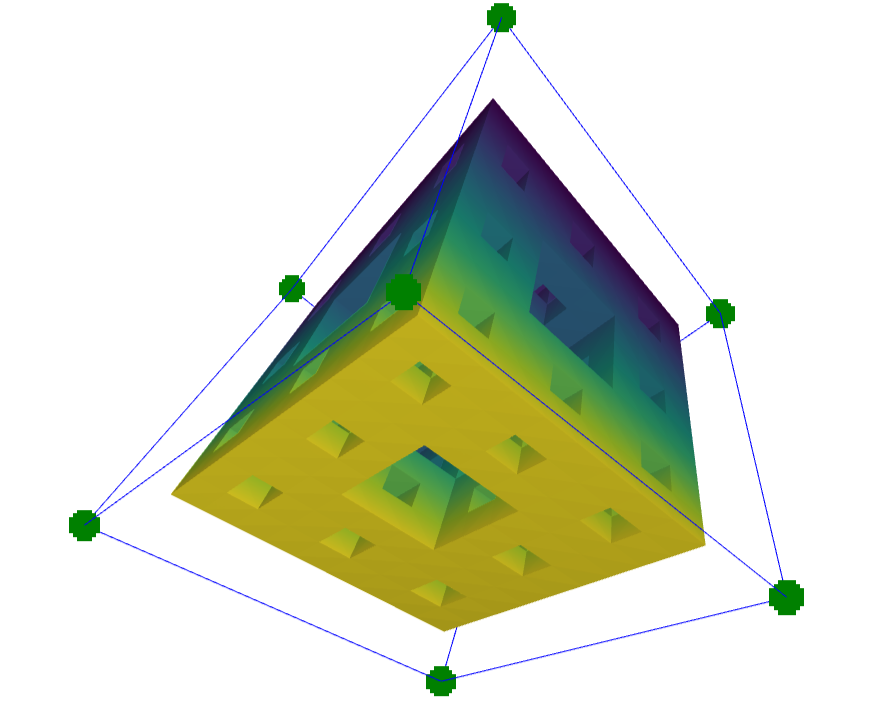}
	\end{center}
\end{minipage}
\begin{minipage}{.32\textwidth}
	\begin{center}
	\includegraphics[width=.93\textwidth]{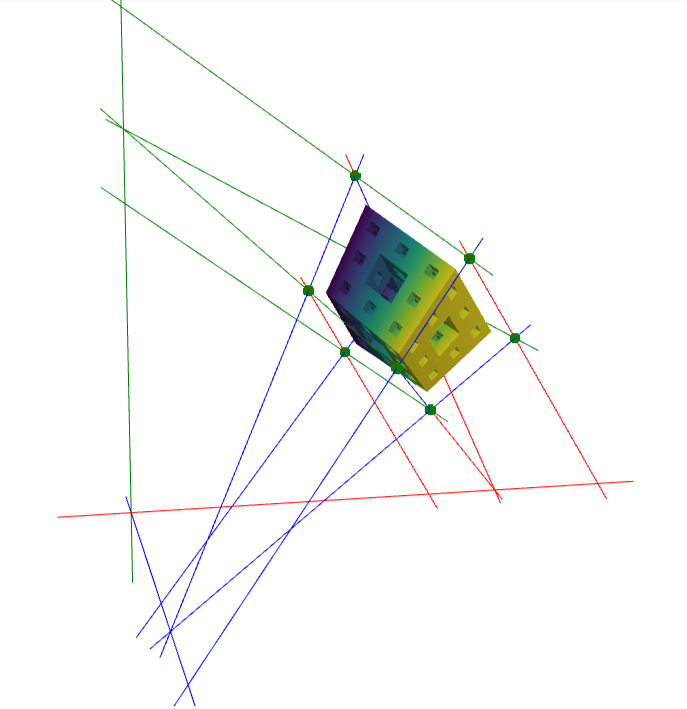}
	\end{center}
\end{minipage}
\caption{Left: the unit cube enclosing a model. Middle: deformation of the unit cube and the model using the trilinear rational map constructed in \cref{example: quadcube}, with uniform weights $w_{ijk} = 1$ for every $0\leq i,j,k \leq 1$. 
Right: the lines $s$ (red), $t$ (green), and $u$ (blue) used in the construction. These three lines intersect at a common point $\mbf{A}$. Additionally, all the boundary $s$-lines (red), $t$-lines (green), and $u$-lines respectively intersect $s,t$, and $u$.}
\label{quadcube}
\end{figure}
In particular, the boundary surfaces are smooth quadrics. 
Hence, these control points produce a ``quadcube'', i$.$e$.$ a quadrilateral-faced hexahedron where the flat facets are replaced by quadric surfaces.
\end{example}

In the setting of \cref{subsec: distbir}, a closest birational map to $\phi$ is an orthogonal projection of $\phi$ onto $V$.  
For a general $\phi$, there are exactly $6$ such projections, where at least two of them are real. 
Equivalently, the Euclidean distance degree of $V$ is $6$ \cite[Example 8$.$2]{EDdegree}.
In practice, a CP decomposition of the tensor governing birationality can be employed to compute a birational approximation, as demonstrated in the following example.

\begin{example}[Birational approximation of a pyramidal rational map]
\label{example: birational approx}
Consider the trilinear rational map $\phi$ defined by the control points
\begin{align*}
\setlength\arraycolsep{1pt}
\mbf{P}_{000} = 
\begin{pmatrix}
1, & -\frac{8}{5}, & 0, & 1
\end{pmatrix}
\ ,\ 
\mbf{P}_{100} = 
\begin{pmatrix}
1, & 0, & -\frac{9}{5}, & \frac{1}{2}
\end{pmatrix}
\ ,\ 
\mbf{P}_{010} = 
\begin{pmatrix}
1, & 0, & \frac{27}{20}, & \frac{1}{2}
\end{pmatrix}
\ ,\ 
\mbf{P}_{110} = 
\begin{pmatrix}
1, & \frac{4}{5}, & 0, & 1
\end{pmatrix}
\ ,\\[6pt] 
\setlength\arraycolsep{1pt}
\mbf{P}_{001} = 
\begin{pmatrix}
1, & -\frac{11}{10}, & 0, & \frac{9}{4}
\end{pmatrix}
\ ,\ 
\mbf{P}_{101} = 
\begin{pmatrix}
1, & 0, & -\frac{4}{5}, & 3
\end{pmatrix}
\ ,\ 
\mbf{P}_{011} = 
\begin{pmatrix}
1, & 0, & \frac{3}{5}, & 3
\end{pmatrix}
\ ,\ 
\mbf{P}_{111} = 
\begin{pmatrix}
1, & \frac{11}{20}, & 0, & \frac{9}{4}
\end{pmatrix}
\ ,
\end{align*}
and $w_{ijk} = 1$ for every $0\leq i,j,k \leq 1$. 
Since the four boundary $u$-lines, i$.$e$.$ $u_{ij} = \ovl{\mbf{P}_{ij0}\mbf{P}_{ij1}}$ for each $0\leq i,j \leq 1$, meet at the point 
$\mbf{A} = 
\setlength\arraycolsep{1pt}
\begin{pmatrix}
1, & 0, & 0, & 5
\end{pmatrix}
$
it follows that $\phi$ is pyramidal. 
In particular, we compute (recall \cref{notation: pyramidal})
\begin{gather*}
\Delta_{000} = \frac{5}{6} \ ,\ 
\Delta_{100} = \frac{20}{21} \ ,\  
\Delta_{010} = \frac{80}{63} \ ,\  
\Delta_{110} = \frac{5}{3} \ ,\\[6pt]   
\Delta_{001} = \frac{40}{33} \ ,\ 
\Delta_{101} = \frac{15}{7} \ ,\ 
\Delta_{011} = \frac{20}{7} \ ,\  
\Delta_{111} = \frac{80}{33} \ . 
\end{gather*}
Thus, the tensor 
$$
W 
= \left( \frac{w_{ijk}}{\Delta_{ijk}}\right)_{0\leq i,j,k\leq 1}
= \left( \frac{1}{\Delta_{ijk}}\right)_{0\leq i,j,k\leq 1}
$$
does not have rank one, and by  \cref{theorem: tensor pyramidal} $\phi$ is not birational. 
However, we can render $\phi$ birational by computing new weights using the formulas in \cref{corollary: weight formulas pyramidal} for any choice of $(\alpha_0:\alpha_1)\times (\beta_0:\beta_1)\times (\gamma_0:\gamma_1) \in (\P_\R^1)^3$. 
Nevertheless, the new birational map $\phi$ might differ significantly from the original $\phi$ if we make a ``bad'' choice. 
A smarter idea is to compute a rank-one CP decomposition of $W$. 
A rank-one approximation of $W$ is given by 
$$
W_{\text{bir}} = 
\left(
\alpha_0 \, ,\, \alpha_1
\right)
\otimes
\left(
\beta_0 \, ,\, \beta_1
\right)
\otimes
\left(
\gamma_0 \, ,\, \gamma_1
\right)
=
\left( 
0.95 \, ,\, 0.91
\right)
\otimes
\left( 
1.06 \, ,\, 0.78
\right)
\otimes
\left( 
1.08 \, ,\, 0.75
\right)
\ .
$$
Specifically, this leads to the exact rational weights
$
w_{ijk} = \alpha_i \, \beta_j \, \gamma_k \, \Delta_{ijk}
$
for each $0 \leq i,j,k \leq 1$. 
Additionally, we compute  
$$
\distbir(\phi) = 
\frac{\lVert W - W_\text{bir} \rVert}{\lVert W \rVert} 
\sim 
0.2594
\ .
$$
In \cref{fig: birational approx}, we present a comparison between the deformations resulting from the original rational map, with uniform weights $w_{ijk} = 1$ for every $0\leq i,j,k \leq 1$, and its  birational approximation, utilizing the new weights. 
\begin{figure}
\begin{minipage}{.48\textwidth}
	\begin{center}
	\includegraphics[width=0.8\textwidth]{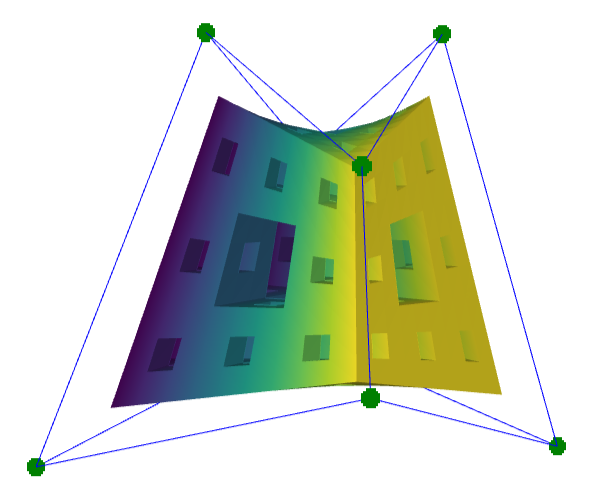}
	\end{center}
\end{minipage}
\begin{minipage}{.48\textwidth}
	\begin{center}
	\includegraphics[width=0.8\textwidth]{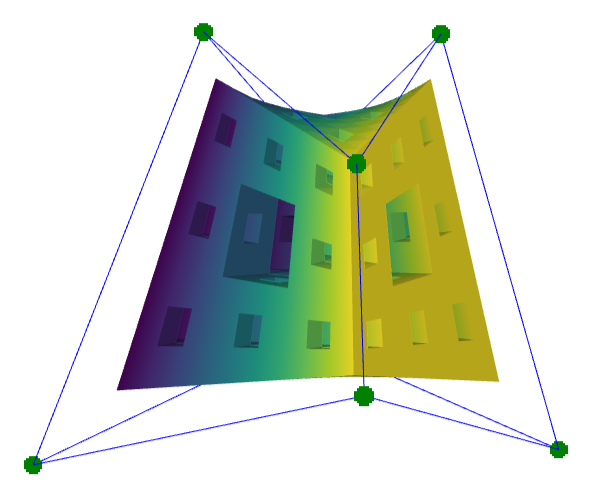}
	\end{center}
\end{minipage}
\caption{The two pyramidal maps presented in \cref{example: birational approx}. 
In the left image, we use uniform weights, specifically $w_{ijk} = 1$ for every $0\leq i,j,k \leq 1$. 
The right image showcases the effect of utilizing the computed birational weights. 
These weight adjustments subtly influence the deformation, while simultaneously ensuring the existence of an inverse transformation.}
\label{fig: birational approx}
\end{figure}
\end{example}
   
Moreover, our results directly yield the defining formulas of the inverse rational map. 
%The following example shows the explicit computation of the inverse of the birational map of the previous example.  
 
\begin{example}
\label{chapter: introduction - section: contributions - example: pyramidal 2}
We continue with \cref{example: birational approx}, using the computed birational weights.  
In this case, the vectors and matrices defining the boundary surfaces are 
\begin{align*}
\setlength\arraycolsep{2pt}
\bs{\sigma}_0 = 
\begin{pmatrix}
5, & -\frac{5}{2}, & \frac{10}{3}, & 1
\end{pmatrix} \ \ ,\ \ & 
\setlength\arraycolsep{2pt}
\bs{\sigma}_1 = 
\begin{pmatrix}
4, & -4, & 2, & -\frac{4}{5}
\end{pmatrix}
\ ,\\[6pt]
\setlength\arraycolsep{2pt}
\bs{\tau}_0 = 
\begin{pmatrix}
5, & \frac{5}{2}, & \frac{5}{2}, & -1
\end{pmatrix} \ \ ,\ \ & 
\setlength\arraycolsep{2pt}
\bs{\tau}_1 = 
\begin{pmatrix}
-3, & 3, & 2, & \frac{3}{5}
\end{pmatrix}
\ ,\\[6pt]
\bs{\upsilon}_0 = 
\begin{pmatrix}
390 & -30 & -\frac{45}{2} & -294 \\[4pt] 
-30 & 75 & 0 & 60 \\[4pt] 
-\frac{45}{2} & 0 & -50 &\frac{45}{2} \\[4pt]
-294 & 60 & \frac{45}{2} & 102
\end{pmatrix} \ \ ,\ \ & 
\bs{\upsilon}_1 = 
\begin{pmatrix}
1530 & -\frac{495}{2} & -45 & -438 \\[4pt] 
-\frac{495}{2} & -225 & 0 & \frac{165}{2} \\[4pt] 
-45 & 0 & 150 & 20 \\[4pt]
-438 & \frac{165}{2} & 20 & 114
\end{pmatrix}
\ .
\end{align*} 
Furthermore, using the formulas in  \cref{notation: pyramidal} we compute 
\begin{align*}
(\lambda_0:\lambda_1)
\times 
(\mu_0:\mu_1)
\times 
(\nu_0:\nu_1)
= 
(4 : -5)
\times 
(3 : -5)
\times 
(11 : -18)
\ . 
\end{align*}
Thus, by \cref{theorem: inverse pyramidal} $\phi^{-1}:\P_\R^3\dashrightarrow \P_\R^1 \times \P_\R^1\times \P_\R^1$ is given by  
$$
(x_0 : x_1 : x_2 : x_3) 
\mapsto 
\left(
\alpha_1 \, \lambda_1 \, \sigma_1
: 
\alpha_0 \, \lambda_0 \, \sigma_0
\right)
\times
\left( 
\beta_1 \, \mu_1 \, \tau_1
: 
\beta_0 \, \mu_0 \, \tau_0
\right)
\times 
\left(
\gamma_1 \, \nu_1 \, \upsilon_1
: 
\gamma_0 \, \nu_0 \, \upsilon_0
\right)
\ .
$$
\end{example} 

We conclude with an example of the deformation of hexahedral birational maps.

\begin{figure}
\begin{minipage}{.48\textwidth}
	\begin{center}
	\includegraphics[scale=0.35]{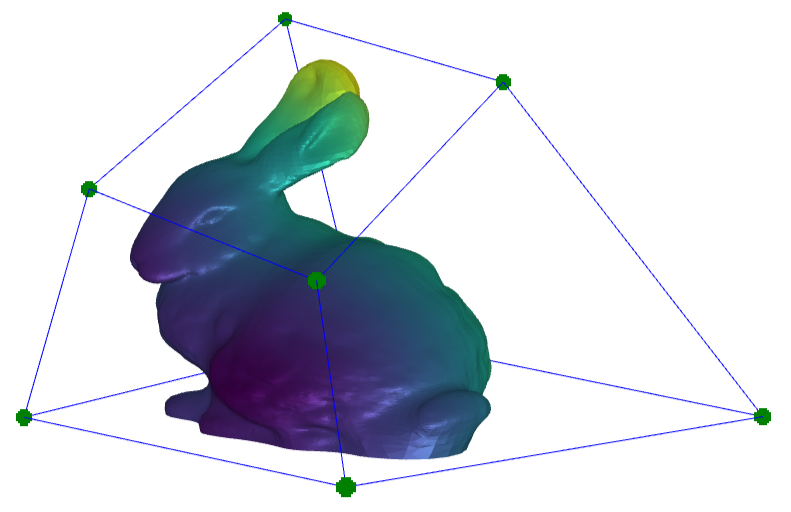}
	\end{center}
	\end{minipage}
	\begin{minipage}{.48\textwidth}
	\begin{center}
	\includegraphics[scale=0.42]{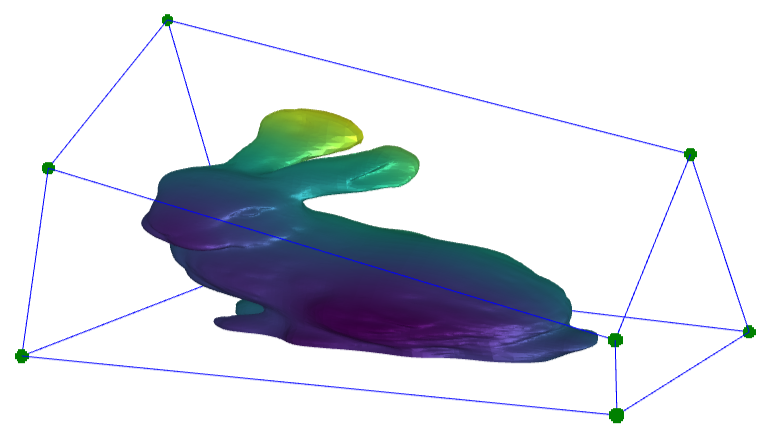}
	\end{center}
	\end{minipage}
\caption{A birational deformation of the hexahedral birational map of \cref{example: deformation}. 
The boundary plane $\Sigma_1$, previously defined by the vector 
$\bs{\sigma}_1 = 
(
1.25, \, -0.63, \, -0.32, \, -0.63 
)$, is updated to the plane defined by 
$\bs{\sigma}_1 = 
(
2.31, \, -0.84, \, -0.2, \, -0.32 
)$, yielding new control points.
Additionally, the weights are also updated to preserve birationality.}
\label{figure: deformation} 
\end{figure}

\begin{example}[Deformation of a hexahedral birational map]
\label{example: deformation}
Consider the boundary planes defined by
%Specifically, consider the vectors 
\begin{align*}
\setlength\arraycolsep{2pt}
\bs{\sigma}_0 = 
\begin{pmatrix}
0.16, & -0.45, & -0.07, & -0.14
\end{pmatrix} \ \ ,\ \ & 
\setlength\arraycolsep{2pt}
\bs{\sigma}_1 = 
\begin{pmatrix}
1.25, & -0.63, & -0.32, & -0.63 
\end{pmatrix}
\ ,\\[6pt]
\setlength\arraycolsep{2pt}
\bs{\tau}_0 = 
\begin{pmatrix}
0, & 0, & 0, & 1
\end{pmatrix} \ \ ,\ \ & 
\setlength\arraycolsep{2pt}
\bs{\tau}_1 = 
\begin{pmatrix}
 -1.18, & 0.18, & 0.51, 1
\end{pmatrix}
\ ,\\[6pt]
\setlength\arraycolsep{2pt}
\bs{\upsilon}_0 = 
\begin{pmatrix}
0, & 0, & 1, & 0 
\end{pmatrix} \ \ ,\ \ & 
\setlength\arraycolsep{2pt}
\bs{\upsilon}_1 = 
\begin{pmatrix}
-1.17, & 0.1, & 0.8, & 0.54  
\end{pmatrix}
\ .
\end{align*} 
For each $0\leq i,j,k \leq 1$, we can express $\mbf{P}_{ijk} = \Sigma_i \cap T_j \cap Y_k$ as the exterior product 
\begin{equation}
\label{chapter: introduction - section: contributions - eq: control points as exterior product}
\mbf{P}_{ijk} = \Delta_{ijk}^{-1} \ \bs{\sigma}_i \wedge \bs{\tau}_j \wedge \bs{\upsilon}_k
\ , 
\end{equation}
where each $\Delta_{ijk}$ can be computed using \cref{eq: Deltas hexahedral}. 
If we initialize the weights as $w_{ijk} = 1$ for every $0\leq i,j,k \leq 1$, a rank-one approximation of the tensor $W$ in \cref{theorem: tensor hexahedral} can be computed with the same strategy of \cref{example: birational approx}, 
\begin{gather*}
W = 
(\alpha_0 \, ,\, \alpha_1)
\otimes 
(\beta_0 \, ,\, \beta_1)
\otimes 
(\gamma_0 \, ,\, \gamma_1)
= 
(1.56 \, ,\, 1.24)
\otimes 
(1.12 \, ,\, 1.65)
\otimes 
(1.02 \, ,\, 1.71)
\ .
\end{gather*}
By \cref{corollary: weight formulas hexahedral}, setting $w_{ijk} = \alpha_i \, \beta_j \, \gamma_k \, \Delta_{ijk}$ 
renders $\phi$ birational. 
The left image in \cref{figure: deformation} shows the deformation of a model enclosed in the unit cube $[0,1]^3$ by $\phi$. 

Now, suppose that we want to update the control points, as a user frequently does during the design process.
However, we want to do so preserving birationality, so the property of being hexahedral must be preserved (recall \cref{fact: hexahedral}). 
Using \cref{construction: hexahedral}, we decide to update the boundary plane $\bs{\sigma}_1$ linearly in time, namely 
\begin{align*}
t \mapsto\ & 
\bs{\sigma}_1(t) = 
\setlength\arraycolsep{2pt}
(1-t)
\begin{pmatrix}
1.25 & -0.63 & -0.32 & -0.63 
\end{pmatrix} 
+ 
t 
\begin{pmatrix}
2.31 & -0.84 & -0.2 & -0.32 
\end{pmatrix}
\end{align*} 
for $t\in [0,1]$.  
In particular, for each $0\leq j,k\leq 1$ this induces a linear  function $\Delta_{1jk}(t)$. 
Therefore, setting 
$$
w_{1jk}(t) = \alpha_1 \, \beta_j \, \gamma_k \, \Delta_{1jk}(t)
$$
ensures birationality for each instant $t\in[0,1]$ of the deformation. 
\end{example}

\end{document}